\documentclass[11pt,reqno,tbtags]{amsart}
\usepackage{subcaption}
\usepackage{graphicx}
\usepackage{amsthm,amssymb,amsfonts,amsmath}
\usepackage{amscd} 
\usepackage{mathrsfs}
\usepackage[numbers, sort&compress]{natbib} 
\usepackage{vmargin}
\usepackage{setspace}
\usepackage{paralist}
\usepackage[pagebackref=true]{hyperref}
\usepackage[usenames,dvipsnames]{color}
\usepackage{tcolorbox}
\usepackage[normalem]{ulem}
\usepackage{stmaryrd} 
\usepackage[refpage,noprefix,intoc]{nomencl} 
\usepackage{bbm}
\providecommand{\R}{}
\providecommand{\Z}{}
\providecommand{\N}{}

\renewcommand{\R}{\mathbb{R}}
\renewcommand{\Z}{\mathbb{Z}}
\renewcommand{\N}{{\mathbb N}}

\newcommand{\pr}{\mathbb{P}}

\newcommand{\E}{\mathbb E}

\newcommand\T{{\mathcal T}}

\providecommand{\ora}[1]{}
\renewcommand{\ora}[1]{\overrightarrow{#1}}
\DeclareRobustCommand{\SkipTocEntry}[5]{} 

\reversemarginpar
\definecolor{cser}{RGB}{0,145,135}

\definecolor{cjer}{RGB}{203,96,21}

\newtheorem{thm}{Theorem}
\newtheorem{lem}[thm]{Lemma}
\newtheorem{prop}[thm]{Proposition}
\newtheorem{cor}[thm]{Corollary}

\newtheorem{definition}[thm]{Definition}

\theoremstyle{definition}
\newtheorem{rem}[thm]{Remark}
\numberwithin{equation}{section}
\numberwithin{thm}{section}

\graphicspath{ {./images/} }
\usepackage{wrapfig}

\newcommand{\floor}[1]{\lfloor #1 \rfloor}

\begin{document}
	
\title{Convergence and compactness of discrete aggregation trees} 
\author[S. Donderwinkel]{Serte Donderwinkel}
\address{Bernoulli Institute for Bernoulli Institute for Mathematics, Computer Science and Artificial Intelligence and CogniGron, Groningen, The Netherlands}
\author[J. van Haastert]{Jeroen van Haastert}
\address{Faculty of Mathematics, Ruhr University Bochum, 44780 Bochum, Germany}
\email{s.a.donderwinkel@rug.nl}
\email{jeroen.vanhaastert@ruhr-uni-bochum.de}

\date{September 11, 2026} 

\keywords{Random trees; scaling limits; continuum random trees; aggregation trees; Gromov–Hausdorff–Prokhorov topology; metric measure spaces; random discrete structures}
\subjclass[2020]{05C80, 60F17, 60B10, 05C05, 60C05} 

\begin{abstract} 
We study a class of random aggregation trees that generalizes the discrete stick-breaking construction of the uniform labelled tree (Aldous, 1991). To sample the tree of size $n$, vertices are added sequentially, with the $i$th vertex starting a new branch with a prescribed probability $f(n,i)$; otherwise, it extends the current branch. We also define a continuum analogue by replacing the Poisson point process of intensity $tdt$ in the construction of the Brownian continuum random tree by one of intensity $f(t)dt$.

We establish scaling limits for two families of discrete aggregation trees in the Gromov--Hausdorff--Prokhorov topology. When $f(n,i)=(i/n)^\beta$, with $\beta>0$, after rescaling the graph distance by $n^{-\beta/(\beta+1)}$, the random tree converges to the compact aggregation tree with $f(t)=t^\beta$. This recovers convergence to the  Brownian continuum random tree when $\beta=1$ (Aldous, 1991), as well as  scaling limits of choice spanning trees for integer $\beta$ (Archer and Shalev, 2024). We also prove convergence under rescaling to the compact aggregation tree with $f(t)=\log^\gamma(1+t)$ for every $\gamma>1$.

Finally, we identify necessary conditions for compactness. Consequently, the threshold $\gamma>1$ in the logarithmic family is sharp. These results provide insight into an open problem on compactness criteria for random aggregation trees (Curien and Haas, 2014).
\end{abstract}

\maketitle
	\section{Introduction}\label{Section:introduction}
    It is well known that the scaling limit of uniformly chosen labelled rooted trees is the continuum random tree (CRT). This was first shown by Aldous in \cite{AldousCRT1} via the stickbreaking construction: a uniformly distributed labelled tree was constructed by starting with a root vertex labelled $1$ and iteratively attaching vertices with labels $\{2,\dots,n\}$ where vertex $i+1$ is attached to the vertex with label $\min(U_i, i)$ where $U_i \in_u [n]$. This constructs the tree branch by branch, where vertex $i+1$ is added to the current branch with probability $1 - i/n$ and starts a new branch at a uniform point of the already constructed tree with probability $i/n$. A uniformly labelled tree is obtained after randomly relabelling the vertices. 

    The branch-by-branch construction of the uniform tree on $n$ vertices allows for determining their scaling limit, which can be described as follows: Let $\eta$ be a Poisson point process (PPP) of intensity $t dt$ on the positive real line. The points of the PPP split up $\R_{\ge 0}$ into sticks, which we order according to their location on $\R_{\ge 0}$. We then iteratively construct a real tree by glueing the $i$th stick at a uniformly chosen point on the tree constructed from the previous $i-1$ sticks. The completion of the resulting object is a compact metric space that can be equipped with a natural probability measure. In \cite{AldousCRT1}, Aldous proved that the rescaled uniform labelled tree converges to the Brownian CRT; in modern terminology, this convergence may be formulated in the Gromov--Hausdorff--Prokhorov topology.

    The iterative construction, where new branches of the tree attach at a uniform position on the previously built tree, makes the uniform labelled tree an example of a \emph{discrete aggregation tree}, and, its scaling limit, the CRT, is a \emph{continuous aggregation tree}. See \cite{curien2016randomtreesconstructedaggregation} for the first use of the term `aggregation trees' in the literature. In this work, we examine a wider class of discrete aggregation trees, by varying the intensity of new branches forming.

    To be precise, we consider the following natural generalization of the discrete object: the $(i+1)$st vertex added to the tree starts a new branch from a uniformly random existing vertex with probability $f(n,i)$ for some function $f(n,i) : \{1,\dots,n-1\} \to [0,1]$. We call the resulting tree the \emph{$f(n,i)$-aggregation tree}. For the continuous object, we change the intensity measure in the point process used to construct the CRT to $f(t) dt$ for $f:[0, \infty) \to [0, \infty)$ a locally integrable function to obtain the $f(t)$-aggregation tree. Informally, for $0=C_0<C_1<\dots$ the points of a Poisson point process on $[0,\infty)$ with intensity $f(t)dt$, we let the $i$th `stick' have length $c_i=C_i-C_{i-1}$ and, for $i\ge 2$, we recursively glue an end of the $i$th stick on a uniform position of the tree formed by the previous sticks and thereafter take the completion of the resulting metric space. (In Theorems  \ref{thm:not_compact} and~\ref{thm:not_compact2} we show that the \emph{$f(t)$-aggregation tree} is not always compact.)
    
    Our first result is as follows.
    \begin{thm}
    \label{Theorem: main result 1 (polynomial case)}
        Let $f(n,i) = (i/n)^\beta$ for $\beta > 0$ and let $\T_n^f$ the $f(n,i)$-aggregation tree. Let $\T_\beta$ be the $t^\beta$-aggregation tree. Then, $\T_\beta$ is compact and
        $$\big(\T_n^f, n^{-\frac{\beta}{\beta+1}} d_n, \nu_n\big) \xrightarrow[n \to \infty]{d} \big(\T_\beta, d, \mu\big)$$
        in the Gromov--Hausdorff--Prokhorov topology, for $\nu_n$ the uniform vertex measure and $\mu$ a probability measure.  
    \end{thm}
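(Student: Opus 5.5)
We prove Theorem~\ref{Theorem: main result 1 (polynomial case)} by the two-step scheme used by Aldous for the CRT. First we show convergence of the first $k$ branches. Then we show that the rest of the tree is uniformly close to them in Hausdorff distance, and that compactness of $\T_\beta$ follows from the same estimate.

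Write $a_n = n^{\beta/(\beta+1)}$ for the scale. The branch points of the discrete tree are the indices $i$ at which a new branch starts. Their counting process $N_n(s)=\#\{i\le s a_n: \text{vertex } i+1 \text{ starts a branch}\}$ is a sum of independent Bernoulli$((i/n)^\beta)$ variables. Its mean is
\[
\sum_{i\le sa_n}(i/n)^\beta \approx \frac{(sa_n)^{\beta+1}}{(\beta+1)n^\beta}=\frac{s^{\beta+1}}{\beta+1},
\]
which is exactly the intensity of $\eta$ with $f(t)=t^\beta$. The rescaled branch endpoints $(C^n_j/a_n)_{j\le k}$ therefore converge jointly in law to $(C_j)_{j\le k}$, by the standard Poisson limit for independent rare Bernoulli sums (convergence of the compensators).

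Each new branch attaches at a uniformly chosen existing vertex. After rescaling, this attachment point converges to a point chosen uniformly by length measure on the previously built continuum tree. Hence we can couple the discrete and continuum constructions so that the first $k$ sticks and their gluing points agree up to $o(1)$. It follows that the rescaled subtree $\T^{n,k}$ spanned by the first $k$ branches converges in Gromov--Hausdorff distance to $\T^{(k)}$, the union of the first $k$ continuum sticks.

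For the measure, note that the first $k$ branches contain only $O(a_n)=o(n)$ vertices, so $\nu_n$ is carried almost entirely by later branches. The natural object is the projection of $\nu_n$ onto $\T^{n,k}$, each vertex sent to the point where its path to the root meets $\T^{n,k}$. We identify its limit $\mu^{(k)}$ on $\T^{(k)}$ as follows: later branches attach uniformly by length, so mass spreads proportionally to length, and $\mu^{(k)}$ should converge to $\mu$ as $k\to\infty$. We define $\mu$ as the limit of the projected measures, which is well defined since the projections are consistent in $k$.

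The main step, and the main obstacle, is tightness: for every $\varepsilon>0$,
\[
\lim_{k\to\infty}\limsup_{n\to\infty}\pr\big(d_H(\T^{n,k},\T_n^f)>\varepsilon a_n\big)=0,
\]
together with the analogous almost-sure statement $d_H(\T^{(k)},\T_\beta)\to 0$, which gives compactness of $\T_\beta$. Given this, GHP convergence follows from a standard approximation lemma. The Hausdorff bound also gives Prokhorov closeness of $\nu_n$ to its projection on $\T^{n,k}$, since every vertex lies within $\varepsilon a_n$ of its projection.

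To prove tightness, we decompose the late sticks dyadically in the continuum case. Sticks with index in $[2^m,2^{m+1})$ sit at times $t\asymp 2^{m/(\beta+1)}$ and have length about $1/f(t)=t^{-\beta}\asymp 2^{-m\beta/(\beta+1)}$. The height added by generation $m$ is at most the maximal stick length in that block, plus the maximal height of chains of sticks within the block. A chain of $\ell$ block-$m$ sticks glued successively onto one another has probability roughly $(\text{block length}/\text{total length})^\ell$, which is small, because the total length $\approx t$ dominates the block contribution. A union bound over the $2^m$ sticks then gives a block-$m$ height contribution of order $m\,2^{-m\beta/(\beta+1)}$ with summable failure probabilities. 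Summing over $m\ge \log_2 k$ gives a tail tending to $0$, so Borel--Cantelli yields compactness.

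The discrete tightness bound follows by running the same argument uniformly in $n$. In the regime $i\le n$ the discrete branch lengths dominate geometric variables with parameter $(i/n)^\beta$. The region $i$ near $n$, where branches have length $O(1)$, contributes only $O(\log n)=o(a_n)$ to the height, by the same chain estimate.
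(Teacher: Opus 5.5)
\textbf{The gap is in the measure component.} Your approximants are $(\T^{n,k},\operatorname{proj}_k\nu_n)$. That makes the comparison with $(\T_n,\nu_n)$ free once Hausdorff tightness is known. But it pushes all the difficulty into the fixed-$k$ step: you must show that $\operatorname{proj}_k\nu_n$ converges in distribution, jointly with $\T^{n,k}$, as $n\to\infty$.

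Your identification of that limit as the length measure on $\T^{(k)}$ (``mass spreads proportionally to length'') is false. As branches $j=k+1,k+2,\dots$ are added, the projected mass of $A\subset\T_n^{(k)}$ evolves as the P\'olya-urn-type martingale $\nu_n^{(j)}(A^\uparrow)$. Its increments have conditional variance $\nu_n^{(j)}(A^\uparrow)\bigl(1-\nu_n^{(j)}(A^\uparrow)\bigr)(c^n_{j+1}/C^n_{j+1})^2$, and summed over $j\ge k$ this is of order $1/k$ uniformly in $n$, not $o(1)$.

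Concretely, take $k=1$. Every vertex of the subtree grafted at $B_1^n$ projects to the single vertex $B_1^n$. The mass of that subtree is a martingale with value $\approx c_2^n/C_2^n$ at $j=2$. So any limit of $\operatorname{proj}_1\nu_n$ has an atom at $\rho(B_1)$ of conditional mean $c_2/C_2>0$, which the length measure does not have. The correct limit is the random measure $\operatorname{proj}_k\mu$; only its conditional expectation given $\T^{(k)}$ equals $\mu^{(k)}$. For the same reason, ``define $\mu$ as the limit of the projected measures'' presupposes exactly the convergence that has not been proved. Consistency in $k$ is automatic once the limits exist; existence is the issue.

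\textbf{What is missing} is the uniform-in-$n$ control of these martingales, which is the heart of the paper's measure argument:
\[
\E\bigl[(\nu_n^{(k)}(A^\uparrow)-\nu_n(A^\uparrow))^2\,\big|\,\T_n^{(k)}\bigr]\le\sum_{j\ge k}\E\bigl[(c^n_{j+1})^2/(C^n_j)^2\,\big|\,\T_n^{(k)}\bigr].
\]
Here $c^n_{j+1}$ is dominated by a geometric variable with parameter $(C^n_j/n)^\beta$. A Chernoff bound on the lower tail of $C^n_j$ gives $\E[n^{2\beta}(C^n_j)^{-2\beta-2}]\lesssim j^{-2}$, so the total is $\lesssim 1/k$ uniformly in $n$.

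Combine this with a partition of $n^{-\beta/(\beta+1)}\T_n^{(K)}$ into finitely many small-diameter sets; their $\uparrow$-extensions have small diameter by Hausdorff tightness. This yields $\lim_k\limsup_n\pr(d_P(\nu_n^{(k)},\nu_n)>\epsilon)=0$. The continuum counterpart, bounded martingale convergence of $\mu^{(j)}(A^\uparrow)$, is what actually constructs $\mu$ on $\T_\beta$.

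With this estimate you have two options. You can switch to the approximants $(\T_n^{(k)},\nu_n^{(k)})$, whose limit is trivially $(\T^{(k)},\mu^{(k)})$. Or you can keep yours, since the estimate shows $\operatorname{proj}_k\nu_n$ is close to $\operatorname{proj}_k\nu_n^{(j)}$ uniformly in $n$ for large $j$.

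\textbf{Your Hausdorff part} is a workable alternative to the paper's. You take a union bound over sticks in index blocks, bounding by the maximal stick length times the maximal grafting-chain length. The paper instead bounds the ancestral line of a single point by exponential tails and converts this to a Hausdorff bound by averaging over length. One correction: the block-to-total length ratio is the constant $2^{1/(\beta+1)}-1<1$, not small. This, together with the $1/\ell!$ from ordering a chain, still lets you take chain length of order $m$.
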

    We formally define $\T_\beta$ and its measure $\mu$ in Section~\ref{Section: the polynomial case}. For $\beta = 1$, we recover Aldous' scaling limit of uniform labelled trees from \cite{AldousCRT1} and for $\beta \in \Z_{\ge 1}$ we recover the scaling limit of the uniform $\beta$-choice spanning trees from \cite{RandomChoiceTree}. See  \cite{RossWen2018} for a formal definition and the convergence result. 

    Our second result shows that we still get a compact scaling limit when the intensity of starting new branches increases much slower than polynomial in $i$. Here, the scale $n^{1/2}$ is chosen to ease notation and could be changed to, for example, $n^\beta$ without changing the proof.
    
    \begin{thm}\label{thm:log}
        Fix $\gamma > 1$. Let $g(n,i) = \ln^\gamma(in^{-\frac{1}{2}} + 1)n^{-\frac{1}{2}}$ and let $\T_n^g$ be the $g(n,i)$-aggregation tree. Let  $\T_\gamma$ be the  $\ln^\gamma(t+1)$-aggregation tree. Then $\T_\gamma$ is compact and
        $$\big(\T_n^g, n^{-\frac{1}{2}} d_n, \nu_n\big) \xrightarrow[n \to \infty]{d} \big(\T_\gamma, d, \mu\big),$$
        in the Gromov--Hausdorff--Prokhorov topology, for $\nu_n$ the uniform vertex measure and $\mu$ a probability measure.  
    \end{thm}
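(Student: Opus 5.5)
The proof has three steps: couple the discrete and continuum stick sequences, compare the finite-stick trees, and control the tails on both sides.

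\textbf{Coupling of sticks.} In $\T_n^g$, vertex $i+1$ starts a new branch with probability $g(n,i)$. After rescaling lengths by $n^{-1/2}$, vertex $i$ sits at time $t=in^{-1/2}$ and the branching probability is $\ln^\gamma(t+1)\,n^{-1/2}$. The branch points therefore form a Bernoulli process on the grid $n^{-1/2}\Z$, with success probabilities $h(t)\,dt$ where $h(t)=\ln^\gamma(t+1)$ and $dt=n^{-1/2}$. On every compact interval $[0,T]$ this process converges to the PPP of intensity $h(t)\,dt$. I would couple the two so that, for each fixed $k$, the first $k$ branch times $C_1^n,\dots,C_k^n$ converge almost surely to $C_1,\dots,C_k$.

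\textbf{Finite-stick comparison.} Each new branch attaches to a uniform existing vertex. Since the vertex measure on the first $k$ discrete branches approximates normalized length measure, the attachment points can be coupled with the continuum ones. Under this coupling, the tree $R_k^n$ spanned by the first $k$ branches, rescaled by $n^{-1/2}$, converges almost surely in Gromov--Hausdorff distance to the tree $R_k$ built from the first $k$ sticks. I would take this finite-dimensional step from the general machinery of Section~\ref{Section: the polynomial case}, which works for arbitrary locally integrable intensities.

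\textbf{Tail control (the main obstacle).} I need two tail estimates, one in the continuum and one uniform in $n$:
\[
\lim_{k\to\infty}\limsup_{n\to\infty}\pr\big(d_H(R_k^n,\T_n^g)>\varepsilon n^{1/2}\big)=0, \qquad d_H(R_k,\T_\gamma)\to 0 \text{ a.s.}
\]
The second of these also gives compactness of $\T_\gamma$. I would use the dyadic-block argument familiar from the CRT.

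Group the times into blocks $[2^j,2^{j+1}]$. The number of sticks in block $j$ is about $2^j j^\gamma$ and each stick has length about $j^{-\gamma}$. Each new stick attaches at a uniform point, so its distance to $R_{\text{prev}}$ behaves like a sum of stick lengths along a path. The key estimate is a bound for the maximal height increment contributed by block $j$. A crude union bound gives a height increment of order $\sum_{j\ge J} j^{-\gamma}\cdot(\text{number of levels})$. More carefully, a point attaches to a stick of block $j$ with probability roughly $\text{(block length)}/\text{(total length)}\approx 1/2$. The depth of a new leaf, measured in blocks it traverses past block $J$, therefore has geometric tails. A union bound over about $2^j j^\gamma$ sticks then costs only a factor $\log(2^j j^\gamma)\approx j$ times the stick length $j^{-\gamma}$.

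That bound gives increments of order $j^{1-\gamma}$, which are not summable. I would therefore instead argue via Borel--Cantelli on $\pr(\text{height increment in block } j>j^{-(1+\delta)})$. This uses the fact that the tail of a sum of $m$ Exp-type lengths of size $j^{-\gamma}$ exceeding $x$ decays like $e^{-x j^\gamma}$. With $x=j^{-(1+\delta)}$ and $\delta<\gamma-1$, this is small enough to beat the factor $2^j j^\gamma$ from the union bound. This is exactly where $\gamma>1$ enters, consistent with the sharpness discussed in the paper.

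The discrete analogue of these bounds holds uniformly in $n$, with geometric lengths replacing exponential ones, for blocks up to time $n^{1/2}$. Beyond time $n^{1/2}$, $g(n,i)\approx \ln^\gamma(n^{1/2})\,n^{-1/2}$, so the remaining $n$ vertices contribute sticks of length $O(\ln^{-\gamma} n)$ in rescaled units, and the same Borel--Cantelli-type sum applies to them.

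\textbf{Conclusion.} The tail estimates give tightness. The measures converge because $\nu_n$ and $\mu$ are both limits of the normalized length measure on $R_k$. Combining the finite-stick convergence with the tail estimates via the standard approximation lemma (Billingsley's Theorem 3.2) yields convergence in the Gromov--Hausdorff--Prokhorov topology.
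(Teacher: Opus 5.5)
Your overall architecture (finite-stick convergence, Hausdorff and Prokhorov tail control, Billingsley's approximation theorem) matches the paper's. The gap is in the tail control: it fails for $\gamma\in(1,2]$, and that is the heart of the theorem.

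\textbf{The dyadic Borel--Cantelli step fails.} Your union bound gives roughly $2^j j^\gamma\exp(-x j^\gamma)=2^j j^\gamma\exp(-j^{\gamma-1-\delta})$. This is small only if $\gamma-1-\delta>1$, not merely $\gamma-1-\delta>0$. Worse, no choice of summable thresholds can rescue a dyadic telescoping. The block $[2^j,2^{j+1}]$ has intensity $\asymp j^\gamma$ over a length $2^j$. So, almost surely for all large $j$, it contains a cut-point-free interval $[s-L,s]$ with $L\asymp \ln(2^j)/j^\gamma\asymp j^{1-\gamma}$, and then $d(\rho(s),\T(2^j))\ge L$. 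Hence $d_H(\T(2^j),\T(2^{j+1}))\ge c\,j^{1-\gamma}$ eventually. Consequently the events $\{d_H(\T(2^j),\T(2^{j+1}))>j^{-(1+\delta)}\}$ have probability tending to $1$ whenever $\gamma<2+\delta$, and $\sum_j d_H(\T(2^j),\T(2^{j+1}))=\infty$ a.s.\ for $\gamma\le 2$. Your argument therefore gives compactness and the discrete tail estimate only for $\gamma>2$. The paper points out exactly this obstruction before its proof.

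\textbf{The missing idea: sparser comparison times.} The paper telescopes along times $x_i t$ with $x_i=\exp(i^\alpha)$ and $\alpha(\gamma-1)>1$ (it takes $\alpha=2/(\gamma-1)$), and bounds $d_H(\T(a),\T(b))$ directly when $b/a$ is large. For $l\in[a,b]$, the path from $\rho(l)$ to $\T(a)$ crosses $N$ sticks with $\pr(N>j)\le (b/a)^x(1+x)^{-j}$. Each piece is dominated by an $\mathrm{Exp}(\ln^\gamma(a+1))$ variable. Together these give $\pr(d_H(\T(a),\T(b))>c)\le 8(b/a)^2(b/c)\exp(-c\ln^\gamma(a+1)/8)$. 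The logarithmic union-bound cost is now paid once per block, as $\ln b$ against the rate $\ln^\gamma a$. So one can take $\epsilon_i\approx 8\big(3\ln x_{i+1}+2\ln t+O(\ln i)\big)/\ln^\gamma(x_i t)$, which is of order $(i+1)^\alpha/i^{\alpha\gamma}$ and summable precisely when $\alpha(\gamma-1)>1$. The discrete version is the same computation with geometric lengths for $n^{1/2}\le a<b\le n$; no separate regime ``beyond time $n^{1/2}$'' is needed.

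\textbf{The measure step is also unproved.} Your one-line justification of measure convergence hides a genuine step. Hausdorff closeness only lets you replace $\nu_n$ by its projection onto $R_k^n$, which gives a piece $A$ mass $\nu_n(A^\uparrow)$. You must still show $\nu_n(A^\uparrow)\approx\nu_n^{(k)}(A)$ uniformly in $n$. The paper does this with the martingale $j\mapsto\nu_n^{(j)}(A^\uparrow)$, whose increments have conditional variance at most $\E[(c^n_{j+1})^2/(C^n_j)^2]$. Chernoff bounds on $C_j^n$ then show these sum over $j\ge k$ to $O(1/k)$ uniformly in $n$. In the continuum, $\mu$ itself comes from bounded martingale convergence.
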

 We formally define $\T_\gamma$ and its measure $\mu$ in Section~\ref{Section: the logarithmic case}. This theorem partially resolves an open problem by Curien and Haas, by showing compactness of continuous aggregation trees in which the length of the longest stick with index exceeding $i$ decreases slower than any polynomial. 

We show that Theorem~\ref{thm:log} cannot be improved to smaller values of $\gamma$, by showing the existence of a barrier to compactness.
\begin{thm}\label{thm:not_compact}
   Let $f(t)$ be increasing and locally integrable such that $\int_1^\infty e^{-f(t)}dt = \infty$. Then, the $f(t)$-aggregation tree is almost surely non-compact.
\end{thm}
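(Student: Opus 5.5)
Our plan is to show that almost surely there are infinitely many sticks that are never hit by later sticks and whose lengths do not tend to zero. This produces infinitely many disjoint "leaf branches" of length at least some $\varepsilon>0$, all emanating from a bounded region. Any $\varepsilon/2$-net must then contain a point near the tip of each of them, so the tree is not totally bounded.

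Write $C_i$ for the Poisson points of intensity $f(t)dt$ and $c_i = C_i - C_{i-1}$. Conditionally on the first $i$ sticks, stick $j>i$ attaches at a uniform point of a tree of total length $C_{j-1}$. So the probability that stick $i$ is never hit is
\[
\prod_{j>i}\Bigl(1-\frac{c_i}{C_{j-1}}\Bigr) \approx \exp\Bigl(-c_i\sum_{j>i} 1/C_{j-1}\Bigr).
\]
Since $\sum_j c_j/C_{j-1}$ diverges, we must restrict to a finite window instead. Concretely, we use the following two facts.

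\begin{itemize}
\item The number of points in $[t,t+1]$ is Poisson with mean $\int_t^{t+1} f \le f(t+1)$, because $f$ is increasing. Hence the probability that $[t,t+1]$ contains no point of the process is at least $e^{-f(t+1)}$.
\item The divergence of $\int_1^\infty e^{-f(t)}dt$ means that, summed over integer $k$, $\sum_k e^{-f(k+1)} = \infty$.
\end{itemize}

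The events $\{\eta\cap[2k,2k+1]=\emptyset\}$ over $k$ are independent. By the second Borel--Cantelli lemma, almost surely infinitely many of them occur, so infinitely many sticks have length at least $1$. Here we use that $\sum_k e^{-f(2k+1)}$ also diverges, since $f$ is increasing and the even and odd sums are comparable.

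Sticks of length at least $1$ in a non-compact tree do not by themselves contradict compactness, because later sticks may attach along them and these long sticks could sit in the interior. We use the long sticks more cleverly: for each long stick $i$, consider its far half, of length at least $1/2$. It suffices to show that infinitely many such far halves are "escaping", in the sense that the distance from the tip to the rest of the structure built before stick $i$ is at least $1/2$. This holds automatically, because the stick is glued by one end and the tip is at distance $c_i\ge 1$ from the attachment point.

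So we obtain infinitely many points $x_i$, the tips of long sticks $i$, with $d(x_i,\mathcal T_{i-1})\ge 1$, where $\mathcal T_{i-1}$ is the tree spanned by earlier sticks. Later sticks may attach near $x_i$, but distances between points already present never change under gluing. Hence for $i<i'$ both long, $d(x_i,x_{i'})\ge d(x_{i'},\mathcal T_{i'-1})\ge 1$, since $x_i\in\mathcal T_{i'-1}$. Thus $\{x_i\}$ is an infinite $1$-separated set, and the completion cannot be compact. Distances are preserved under completion, so this holds for the final tree as well.

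The main obstacle, and the point to check carefully, is the Borel--Cantelli step. It requires independence of disjoint windows, which holds since the process is Poisson, and divergence of $\sum_k e^{-\int_{2k}^{2k+1}f}$, which follows from monotonicity of $f$ and the hypothesis. Local integrability ensures that only finitely many points lie in each window, so sticks are well defined. Notably, the argument never needs the "never hit" estimate.
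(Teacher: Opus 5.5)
Your proof is correct and is essentially the paper's argument. Independent empty unit windows, with $\int_t^{t+1}f\le f(t+1)$ and the monotonicity of $e^{-f}$ giving a divergent sum, yield by the second Borel--Cantelli lemma infinitely many sticks of length at least $1$; their tips are then pairwise $1$-separated because $d(\rho(C_{i'}),\T^{(i'-1)})=c_{i'}\ge 1$.
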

This has the following consequence, illustrating that the lower bound for $\gamma$ in Theorem~\ref{thm:log} is tight.
\begin{cor}
    For $\gamma\le 1$, the $\ln^\gamma(t+1)$-aggregation tree is not compact.
\end{cor}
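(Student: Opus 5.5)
The corollary follows by applying Theorem~\ref{thm:not_compact} to $f(t)=\ln^\gamma(t+1)$. We need to check three things: $f$ is increasing, $f$ is locally integrable, and $\int_1^\infty e^{-f(t)}\,dt=\infty$.

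For $\gamma>0$, the function $t\mapsto \ln^\gamma(t+1)$ is continuous and nonnegative on $[0,\infty)$. It is increasing, being the composition of the increasing map $x\mapsto x^\gamma$ on $[0,\infty)$ with the increasing map $t\mapsto\ln(t+1)$. Being continuous, it is locally integrable. The degenerate case $\gamma=0$ gives the constant $f\equiv1$, which is still (weakly) increasing and locally integrable. I read ``increasing'' in Theorem~\ref{thm:not_compact} as non-decreasing, and would confirm this against its proof. If $\gamma<0$ is meant to be included, $f$ is decreasing and blows up at $0$, so it is not covered by the theorem. I would either restrict to $0\le\gamma\le1$ or handle it separately: there $f$ is bounded near infinity, so the intensity is at most $f(1)$ on $[1,\infty)$, and a direct comparison with the constant-intensity case should give non-compactness.

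The main computation is divergence of the integral. For $t\ge e-1$ we have $\ln(t+1)\ge1$, and since $\gamma\le1$ this gives $\ln^\gamma(t+1)\le\ln(t+1)$. Therefore
\[
e^{-f(t)}\ge e^{-\ln(t+1)}=\frac{1}{t+1},
\qquad t\ge e-1 .
\]
Since $\int_{e-1}^\infty \frac{dt}{t+1}=\infty$, and the integrand is positive and bounded on $[1,e-1]$, we get $\int_1^\infty e^{-f(t)}\,dt=\infty$. Theorem~\ref{thm:not_compact} then shows that the $\ln^\gamma(t+1)$-aggregation tree is almost surely non-compact.

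No step is a genuine obstacle; the only delicate point is the range of $\gamma$ and the monotonicity hypothesis discussed above. Combined with Theorem~\ref{thm:log}, this shows that $\gamma=1$ is exactly the threshold for compactness in the logarithmic family.
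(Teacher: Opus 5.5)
Your proposal is correct and follows essentially the paper's argument: the paper also deduces the corollary by applying Theorem~\ref{thm:not_compact} to $f(t)=\ln^\gamma(t+1)$. The paper only asserts that $\int_1^\infty e^{-\ln^\gamma(t+1)}\,dt$ diverges for $\gamma\le 1$, and your comparison $e^{-f(t)}\ge 1/(t+1)$ for $t\ge e-1$ makes that explicit; your remarks on weak monotonicity (the proof of Theorem~\ref{thm:not_compact} only uses $\int_{n-1}^n f\le f(n)$ and that $e^{-f}$ is non-increasing) and on the range of $\gamma$ are sound points that the paper leaves implicit.
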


Under the conditions of Theorem~\ref{thm:not_compact}, compactness of the aggregation tree is violated because the stick-lengths do not go to $0$ almost surely. Our next theorem shows that this is not a necessary condition, by providing an even weaker condition for non-compactness, in the case of $f(t)$ with appropriate regularity properties. 

\begin{thm}\label{thm:not_compact2}
Let $f(t)$ be increasing and locally integrable. Further, assume that $f(t)$ is slowly varying.  Then, the $f(t)$-aggregation tree is almost surely non-compact if 
\[\sum_{i=1}^\infty\frac{1}{if(i)}= \infty\]
\end{thm}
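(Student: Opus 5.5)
Since $f$ is slowly varying, the sticks near position $t$ have length about $1/f(t)$. The plan is to find infinitely many disjoint subtrees, each containing a point at distance bounded below from its attachment point. Concretely, we will build an infinite $\varepsilon$-separated set, which rules out total boundedness and hence compactness.

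\textbf{Dyadic blocks.} Split the sticks into blocks $B_k$ of sticks with left endpoints in $[2^k,2^{k+1})$. Block $B_k$ contains about $\int_{2^k}^{2^{k+1}} f \approx 2^k f(2^k)$ sticks, each of length of order $1/f(2^k)$. Each stick in $B_k$ attaches to a uniform point of the existing tree, whose total length is about $2^k$. The hypothesis transfers to these blocks. Because $f$ is increasing, Cauchy condensation gives
\[
\sum_i \frac{1}{i f(i)}=\infty \iff \sum_k \frac{1}{f(2^k)}=\infty .
\]
So the typical stick lengths in successive blocks sum to infinity.

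\textbf{Long chains.} The idea is to show that with probability bounded below, uniformly in $k$, some geodesic path is created by sticks of blocks $B_k, B_{k+1},\dots,B_{K}$ successively attaching to the tip region of the previous one. Along such a path the heights add up to $\sum_{j=k}^K c/f(2^j)$, which diverges as $K\to\infty$.

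For one step: a block $B_{j+1}$ stick hits a given segment of length $\ell\approx 1/f(2^j)$ with probability about $\ell/2^{j}$. There are about $2^{j}f(2^j)$ such sticks, so the expected number of hits is of order a constant. Slow variation keeps this constant bounded away from zero and also makes $f(2^{j+1})/f(2^j)\to1$.

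So the chain behaves like a branching-type process with mean offspring of order one. To keep it from dying out, we allow attachment anywhere in the subtree hanging below the current stick rather than only on the tip segment. The subtree grows like a copy of the whole construction, since aggregation is self-similar at the block scale under slow variation. I would compare it with a Galton--Watson process with mean $\ge 1+\delta$, or use a second-moment argument on the number of long chains.

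\textbf{Conclusion.} Suppose we get, with probability at least $p>0$, a point at height $\ge H$ above the level reached by time $2^k$, for every $k$ and $H$. Then the tree has unbounded height, or at least some subtree fails to be totally bounded. This event is a tail event of the Poisson process in the sense of Kolmogorov's 0--1 law: compactness is unaffected by any finite initial portion, since a compact set plus finitely many segments is compact. Hence non-compactness holds almost surely.

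\textbf{Main obstacle.} The hardest step is the uniform lower bound in the chain argument. It requires controlling the correlation between the mass of the subtree under a stick and future attachment probabilities. It also requires handling the case where $\sum_k 1/f(2^k)$ diverges only barely, as with $f=\ln t$, where the offspring mean sits right at criticality. There I would try to show the number of attachments per block is close to Poisson with mean $\ge1$. I would then use the fact that even a critical chain, conditioned to survive to depth $K$ with probability $\gtrsim 1/K$, can be combined across disjoint dyadic starting blocks via Borel--Cantelli. The divergent series gives $\sum$ of these probabilities $=\infty$, and independence across well-separated blocks then yields infinitely many long branches almost surely.
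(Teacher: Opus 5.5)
\textbf{There is a genuine gap.} Your whole argument rests on the uniform-in-$k$ lower bound for the existence of a long chain. You do not prove it, and the mechanisms you suggest for it do not work as stated.

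\emph{Tip-only chain.} A tip region of length $a/f(2^j)$ receives in expectation about $\int_{2^{j+1}}^{2^{j+2}} f(t)\,\frac{a}{t f(2^j)}\,dt\approx a\ln 2$ sticks of $B_{j+1}$. Since the new stick attaches at a uniform point of the region, the expected height gained per step is only about $(1-a/2)/f(2^j)$. So there is a narrow window of $a$, and no argument that the resulting spatially dependent process survives with probability bounded below. This offspring mean is essentially the same for every slowly varying $f$. Your diagnosis that $f=\ln t$ sits ``at criticality'' therefore conflates the branching mean with the divergence of $\sum_k 1/f(2^k)$, which only governs the height gained per generation.

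\emph{Subtree chain.} Allowing attachment anywhere in the subtree raises the offspring count, but it destroys the additivity of heights behind $\sum_{j=k}^K c/f(2^j)$: a stick attached near the base of the subtree gains nothing over the current tip. The Galton--Watson comparison with mean $1+\delta$ is therefore asserted, not derived.

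\emph{Critical fallback.} Take $f(t)=\ln(t+1)\ln\ln(t+e)$, which satisfies the hypotheses. A chain started in block $k$ gains height of order $\ln\ln(k+K)-\ln\ln k$ after $K$ blocks. Reaching a fixed height $H$ thus needs $K\ge k^{1+\delta_H}$, and the survival probabilities $\asymp 1/K$ are summable in $k$ even before independence is considered. For $f=\ln t$, independence forces geometrically separated starting blocks, and the sum again converges.

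\emph{Tail event.} Your 0--1 law step is too quick. Changing $\eta$ on an initial interval relocates every later attachment point; it does not merely add finitely many segments.

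\textbf{The missing idea} is to look backwards from a typical point rather than forwards along chains. Conditionally on $\eta$, a uniform point of $\T^{(n)}$ lies on stick $n$, at a uniform position, with probability $c_n/C_n$; otherwise it is uniform on $\T^{(n-1)}$. The attachment point of stick $n$ is again uniform on $\T^{(n-1)}$. Hence the distance to the root has the law of $\sum_{i\le n}\xi_iV_ic_i$, with independent $\xi_i\sim\operatorname{Ber}(c_i/C_i)$ and $V_i\sim\operatorname{Unif}[0,1]$, whose mean is $\frac12\sum_{i\le n}c_i^2/C_i$. Discard the trivial case of infinitely many sticks of length at least $1$. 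Then the three-series theorem shows this sum diverges almost surely once $\sum_i c_i^2/C_i=\infty$, so the height is infinite. This is the Curien--Haas necessary condition the paper invokes.

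It remains to show $\sum_i c_i^2/C_i=\infty$ a.s., which is the paper's computation. $\Lambda(C_i)$ is a unit-rate PPP and $\Lambda(t)\sim tf(t)$ by slow variation. So $c_i=(1+o(1))E_i/f(C_i)$, $C_i=(1+o(1))i/f(C_i)$ and $f(C_i)\le(1+o(1))f(i)$, whence $\sum_i c_i^2/C_i\gtrsim\sum_i E_i^2/(if(i))=\infty$.

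Your block heuristics (about $2^kf(2^k)$ sticks of length $\asymp 1/f(2^k)$ in $B_k$, on a tree of length $\asymp 2^k$) say precisely that $B_k$ contributes $\asymp 1/f(2^k)$ to $\sum_i c_i^2/C_i$. Your Cauchy condensation step is correct. That part is the heuristic form of this computation. The branching machinery and the 0--1 law can be dropped, since divergence holds almost surely.
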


\begin{proof}
Let $\eta$ be a PPP of intensity $f(t)dt$ and order its points $C_1 < C_2 < \dots$ Let $c_i = C_i - C_{i-1}$ be the gap sizes between consecutive points in $\eta$. From \cite[Proposition 2.2]{curien2016randomtreesconstructedaggregation} we obtain a necessary condition for the $f(t)$-aggregation tree to be compact: $\sum_{i=1}^\infty c_i^2/C_i < \infty$. 
    Write $\Lambda(t) = \int_0^t f(s)ds$, so that $\Lambda(C_i)$ is a homogeneous PPP of intensity $1$. The strong law for the PPP gives $\Lambda(C_i) = i(1 + o(1))$ a.s. Using that $f$ is slowly varying, we obtain $\Lambda(t)\sim tf(t)$ as $t\to \infty$, so 
    $$C_i = (1 + o(1))\frac{i}{f(C_i)} \qquad\text{and}\qquad c_i=(1+o(1))\frac{E_i}{f(C_i)} \quad\text{a.s.},$$
    where $E_i = \Lambda(C_i) - \Lambda(C_{i-1})$ are i.i.d.\ $\operatorname{Exp}(1)$ random variables. In particular, $C_i\le i $ for all $i$ large enough, so $f(C_i)\le (1+o(1)) f(i)$.  Hence,
    $$\sum_{i=1}^n \frac{c_i^2}{C_i} \ge (1+o(1))\sum_{i=1}^n \frac{E_i^2}{if(i)} \xrightarrow[n \to \infty]{} \infty \quad\text{a.s.}$$
    and thus the $f(t)$-aggregation tree is not compact. 
    
\end{proof}
We now illustrate that this can indeed provide non-compact aggregation trees whose stick-lengths go to $0$. Take $f(t)=\ln(t+1)\ln(\ln(t+e))$. Since $f$ is increasing and $f(t)/\ln(t)\to \infty$ as $t\to \infty$, \cite[Equation 7]{asmussen2016timeinhomogeneitylongestgap} implies that the stick-lengths go to $0$, and it is easy to check that the conditions of Theorem~\ref{thm:not_compact2} are satisfied. Our Theorem~\ref{thm:log} shows that replacing the double logarithm in the definition of $f(t)$ by any positive power of a logarithm does guarantee compactness, illustrating the delicacy of our result.
    
    Our positive and negative results on compactness are related to Open Question 1 in \cite{curien2016randomtreesconstructedaggregation}, which asks for a necessary and sufficient compactness criterion in the quenched setting, when stick lengths are deterministic. A necessary and sufficient criterion is already known in the setting where the sticks have deterministic and decreasing length as shown in \cite{AminiDevroyeGriffithsOlver2017}. In this setting, the tree constructed from gluing stick $i$ with length $\ell (i)$ to the prior constructed tree at a uniform point is compact if and only if $\ell(i) = o(1/\ln(i))$.

    \subsection{Related work}

The stick-breaking construction originates in Aldous' work on the Brownian continuum random tree. Aldous used an inhomogeneous Poisson process with intensity $tdt$ to construct the Brownian CRT and proved that it is the scaling limit of uniform labelled trees \cite{AldousCRT1,CRTIII}. Aldous also showed that the CRT can be encoded by a normalized Brownian excursion, see \cite{CRTII}. 

Our continuum model belongs to the family of random trees constructed by aggregation introduced and studied by Curien and Haas \cite{curien2016randomtreesconstructedaggregation}. Given a deterministic sequence of positive stick lengths $(a_i)_{i\geq 1}$, their model is obtained by successively attaching the $i$th stick at a point sampled uniformly from the length measure on the previously constructed tree. They show that the tree is compact almost surely when either $a_i<i^{-\epsilon+o(1)}$ for some $\epsilon>0$ or the total stick length is finite. (The first condition shows that the $t^\beta$-aggregation tree is compact almost surely, but almost surely neither condition applies to the $\ln^\gamma(t+1)$-aggregation tree that we study.) They also obtain the Hausdorff dimension of the resulting compact tree under stronger conditions. Amini, Devroye, Griffiths and Olver obtained a necessary and sufficient compactness criterion when the stick lengths are deterministic and decreasing \cite{AminiDevroyeGriffithsOlver2017}. Haas subsequently obtained precise asymptotics for the height and for finite reduced subtrees in several noncompact regimes \cite{Haas2017}.  In contrast to \cite{curien2016randomtreesconstructedaggregation,AminiDevroyeGriffithsOlver2017,Haas2017}, the stick lengths in our continuum model are random. Finally, in \cite{Senizergues2019}, S\'enizergues generalizes continuous aggregation trees by gluing metric spaces rather than line-segments and studies their fractal properties.

Several works have obtained continuum aggregation trees as scaling limits of discrete random-tree models. Most closely related to the present paper is the work of Archer and Shalev \cite{RandomChoiceTree}, who, for integer $\beta$, introduce $\beta$-choice spanning trees through variants of the Aldous--Broder and Wilson algorithms that use choice random walks. Their discrete trees admit a branch-by-branch description closely related to our aggregation construction: as the tree is explored, a new branch is either extended or attached to the previously constructed tree, with the choice mechanism modifying the rate at which new branches are formed. They prove convergence to the (deterministically rescaled) $t^\beta$-aggregation tree (with $\beta$ integer). Our polynomial result therefore recovers their continuum limits, while placing them within a broader family of discrete aggregation trees.

The $t^\beta$-aggregation tree (with $\beta$ integer) also occurs, rather mysteriously, as the limit under rescaling of a family of discrete models studied by Ross and Wen \cite{RossWen2018}. They introduce inhomogeneous variants of R\'emy's algorithm and prove almost-sure convergence in the Gromov--Hausdorff--Prokhorov topology to continuous aggregation trees that, again, after deterministic rescaling, agree with the $t^\beta$-aggregation tree for integer $\beta$. Thus, the same continuum objects arise from their recursive growth procedure, from the choice spanning trees of Archer and Shalev, and from the aggregation trees studied here. 

Stick-breaking constructions have also been developed for other important families of continuum random trees. Goldschmidt and Haas constructed stable L\'evy trees by successively attaching line segments whose lengths are governed by a Mittag--Leffler Markov chain \cite{GoldschmidtHaas2015}. Inhomogeneous continuum random trees were introduced as scaling limits of $p$-trees by Camarri and Pitman \cite{CamarriPitman2000} and admit stick-breaking and exploration-process descriptions \cite{AldousPitmanICRT,AldousMiermontPitman}. Their compactness and fractal dimensions were studied by Blanc-Renaudie using a stick-breaking representation \cite{ICRTsecondpaper}. Furthermore, Blanc-Renaudie used stick-breaking constructions to study scaling limits of trees with a given degree sequence \cite{BlancRenaudie2021}. Addario-Berry and the first author used stick-breaking to obtain height bounds on trees with a given degree sequence, simply generated trees and branching processes \cite{height}. More recently, Goldschmidt and Hill introduced a stick-breaking construction to reprove Gromov--Hausdorff--Prokhorov convergence of size-conditioned branching processes to the $\alpha$-stable tree \cite{ICRTpreprint}. These constructions differ from ours both in the mechanism selecting attachment points and in the structure of the process governing the branch lengths.

Our results complement this literature in two directions. First, we derive scaling limits for polynomial intensities with arbitrary positive exponent and slowly varying logarithmic intensities. Second, we show compactness of a new family of aggregation trees, and find an obstruction to compactness formulated directly in terms of the continuum intensity.

	\section*{Acknowledgements}
	The authors would like to thank Gilles Bonnet, who served as the second supervisor for the master project on which this work is based, for his careful reading of the thesis and his useful comments. SD acknowledges
    the financial support of the CogniGron research center
    and the Ubbo Emmius Funds (University of Groningen). Her research was also supported by the Marie Sk\l{}odowska-Curie grant GraPhTra (Universality in phase transitions in random graphs), grant agreement ID 101211705.

    \section*{AI statement}
    Large language models were exclusively used for composing the literature review. 
\section{Preliminaries and Proof Outline}
\label{Section: preliminaries and proof outline}
\subsection{The discrete aggregation tree}
Let $f(n,\cdot) : \{1, \dots, n-1\} \to [0,1]$ be some function. For $i \in [n-1]$ let $R_i \sim \text{Ber}(f(n,i))$ be independent Bernoulli random variables with success probability $f(n,i)$. The $f(n,i)$-aggregation tree, denoted $\T_n^f$, is iteratively constructed on the vertices with labels in $[n]$ . Start with the root vertex labelled $1$. For $i \in [n-1]$ we have the following attachment rule
$$\begin{cases}
    \text{attach vertex } i+1 \text{ to vertex } i & \text{ if $R_i = 0$},\\
    \text{attach vertex } i+1 \text{ to a uniform vertex in } [i] & \text{ if $R_i = 1$}.
\end{cases}$$
Let $C_1^n < \dots< C_{N-1}^n$ be the indices $i$ for which $R_i = 1$, which are called the cut times. We also set $C_0^n = 0$ and $C_k^n = n$ for $k \ge N$. For fixed $f$, we often write $\T_n$ instead of $\T_n^f$. The tree $\T_n$ is constructed branch by branch where, for $j=1,\dots, N$, the $j$th branch, also called stick, is a path consisting of the vertices $C_{j-1}^n + 1, C_{j-1}^n + 2, \dots, C_j^n$. For $j \in [N-1]$, let $B_j^n$ be the random vertex that vertex $C_j^n + 1$ attaches to. We call $B^n_j$ the $j$th attachment point. Observe that conditional on $C_1^n, \dots, C_{N-1}^n$, we have that the attachment points are independent and $B_i^n \sim \text{Unif}([C_i^n])$. 

From the above description, we see that $\T_n$ is constructed branch by branch from a total of $N$ branches and we iteratively attach branch $j+1$ of length $C_j^n-C_{j-1}^n$ to the already constructed tree at the uniformly chosen vertex with label $B_j^n$. To introduce some notation, we let $\T_n(a)$ denote the subgraph of $\T_n$ on the labels $[a]$ with the convention that $\T_n(a) = \T_n$ for $a \ge n$. Let $\T_n^{(j)} = \T_n(C_j^n)$ denote the tree constructed from gluing the first $j$ branches. Lastly, we let $\nu_n^{(j)}$ denote the uniform vertex measure on $\T_n^{(j)}$ and $\nu_n$ the uniform vertex measure on $\T_n$ so that $\nu_n^{(N)} = \nu_n$ with the convention that $\nu^{(k)}_n = \nu_n$ for $k >N$.

Throughout this paper, we work with two choices of $f$. In Section \ref{Section: the polynomial case} we work with $f(n,i) = (i/n)^\beta$ for $\beta>0$ and the corresponding tree is denoted $\T_n^f$. In Section \ref{Section: the logarithmic case}, we work with $g(n,i) = \ln^\gamma(in^{-\frac{1}{2}} + 1)n^{-\frac{1}{2}}$ and denote the corresponding tree $\T_n^g$.

\begin{figure}[h!]
\centering
\begin{tikzpicture}[
every node/.style={circle,draw=black,fill=white,minimum size=7mm,inner sep=1pt},
stickone/.style={draw=black,very thick},
sticktwo/.style={draw=blue,very thick},
stickthree/.style={draw=purple,very thick},
stickfour/.style={draw=orange,very thick},
stickfive/.style={draw=green,very thick},
sticksix/.style={draw=red,very thick},
scale=0.9
]

\node (1) at (0,0) {1};
\node (2) at (0,1.2) {2};
\node (3) at (0,2.4) {3};
\node (4) at (0,3.6) {4};
\node (5) at (1.4,1.2) {5};
\node (6) at (2.8,1.2) {6};
\node (7) at (-1.4,2.4) {7};
\node (8) at (-2.8,2.4) {8};
\node (9) at (2.8,2.4) {9};
\node (10) at (-1.4,3.6) {10};
\node (11) at (-1.4,4.8) {11};
\node (12) at (-1.4,6.0) {12};

\draw[stickone] (1)--(2)--(3)--(4);
\draw[sticktwo] (2)--(5)--(6);
\draw[stickthree] (3)--(7)--(8);
\draw[stickfour] (6)--(9);
\draw[stickfive] (7)--(10)--(11);
\draw[sticksix] (11)--(12);

\end{tikzpicture}
\caption{A sample $\T_{12}$ with $(R_1,\ldots,R_{11})=(0,0,0,1,0,1,0,1,1,0,1)$ and $(B_1^{12}, \dots, B_5^{12}) = (2,3,6,7,11)$. Hence $(C_1^{12}, \dots, C_5^{12}) = (4,6,8,9,11)$. The 6 sticks are indicated with different colors.} 
\label{fig:aggregation-tree-example}
\end{figure}

\subsection{The stick-breaking construction}
\label{Section: stick breaking}
We will define our continuous limit object as a random subspace of the sequence space $\ell^1$. Let $\eta$ be a PPP of intensity $f(t)dt$ on $\R_{\ge 0}$ for some Borel measurable and locally integrable $f:[0, \infty) \to [0, \infty)$ for which $\int_0^\infty f(t)dt = \infty$. Order the points of $\eta$ as $0 = C_0 < C_1 < \dots$ and conditional on $\eta$, let $B_1, B_2, \dots$ be independent with $B_i \sim \text{Unif}([0,C_i])$. Define $\rho(t):\R_{\ge 0} \to \ell^1$ piecewise on $(C_{i-1}, C_i]$ by
    $$\rho(t) = \begin{cases}
        \big(t,0,\dots,0) & \text{ for $t \in [0, C_1]$},\\
        \rho(B_{i-1})  +(t - C_{i-1})z_i  & \text{ for $t \in (C_{i-1}, C_i]$ with $i \ge 2$},
    \end{cases}$$
    where $z_i = (0,\dots,0,1,0,\dots)$ denotes the $i$th basis vector of $\ell^1$. Informally, stick $i$ `opens' a new coordinate direction $i$ in $\ell^1$ and extends distance $C_i - C_{i-1}$ into this dimension from $\rho(B_{i-1})$.) In correspondence with the definitions of $\T_n(a)$ and $\T_n^{(j)}$ above, define $\T(t) = \rho([0,t])$ and $\T^{(k)} = \rho([0,C_k])$. Let
    $$\T = \overline{\bigcup_{k=1}^\infty \T^{(k)}}$$
    be the closure of the union of partial trees $\T^{(k)}$ in $\ell^1$ and note that $\big(\T,d\big)$ is a random metric space, where $d$ denotes the restriction of the $\ell^1$ metric to $\T$. Let $\overline\lambda_k$ be the normalized Lebesgue measure on $[0,C_k]$ and let $\mu^{(k)} = \rho_* \overline{\lambda}$ be its push forward by $\rho$. We call $\rho(C_{i-1},C_i]$ the $i$th stick or $i$th branch of $\T$, and for $i \ge 2$, we call $B_{i-1}$ its attachment point.

    Throughout the paper, we work with two choices for the intensity function. In Section \ref{Section: the polynomial case}, where we prove Theorem \ref{Theorem: main result 1 (polynomial case)}, we work with $f(t) = t^\beta$ and the corresponding tree will be denoted $\T_\beta$. In Section \ref{Section: the logarithmic case}, where we prove Theorem \ref{thm:log}, we work with $g(t) = \ln^\gamma(t+1)$ and the corresponding tree is denoted $\T_\gamma$.

    \subsection{The Gromov--Hausdorff-Prokhorov Topology}
    \label{Section: GHP topology}
    Let $(X,d_X)$ be a metric space. For $A \subset X$, we let $A^\epsilon := \{x \in X : d_X(x, A) \le \epsilon\}$ be the $\epsilon$ thickening of $A$. For $A,B \subset X$, we say,
    $$d_H^X(A,B) = \inf\{\epsilon : B \subset A^\epsilon \text{ and } A \subset B^\epsilon\},$$
    is the Hausdorff distance between $A$ and $B$ and simply write $d_H(A,B)$ if the underlying metric space $X$ is clear from context. Let $\mathcal{P}(X)$ denote the set of Borel probability measures on $(X, \mathcal{B}(X))$. For $\mu, \nu \in \mathcal{P}(X)$, let
     $$d_{P}(\mu, \nu) = \inf\{\epsilon > 0 : \mu(A) \le \nu(A^{\epsilon}) + \epsilon \text{ and } \nu(A) \le \mu(A^\epsilon) + \epsilon\text{ for all $A \in \mathcal{B}(X)$}\},$$
    denote the Prokhorov metric. The metric space $(\mathcal{P}(X), d_P)$ is complete whenever $X$ is complete. We call $(X,d_X,\mu)$ for $\mu \in \mathcal{P}(X)$ a measure metric space and say two measure metric spaces $(X, d_X, \mu_X)$ and $(Y, d_Y, \mu_Y)$ are equivalent if there exists a bijective isometry $\phi: X \to Y$ such that $\phi_* \mu_X = \mu_Y$ where $\phi_*\mu$ denotes the pushforward of $\mu$ under $\phi$. With $\mathbbm{M}$ we denote the set of compact measure metric spaces modulo the above equivalence relation and denote elements in $\mathbbm{M}$ by elements in the equivalence class. On $\mathbbm{M}$ we define the Gromov--Hausdorff--Prokhorov (GHP) metric as,
    $$d_{GHP}(X, Y) = \inf_{\varphi, \psi, Z} \left\{ \max \left( d_H^Z(\varphi(X), \psi(Y)), d_P^Z(\mu_X \circ \varphi^{-1},  \mu_Y \circ \psi^{-1}) \right) \right\},$$
    where the infimum is over all metric spaces $Z$ and isometric embeddings $\varphi: X \to Z$, $\psi: Y \to Z$ and $d_H^Z, d_P^Z$ are the Hausdorff and Prokhorov distance in $Z$ respectively. Throughout this paper, we often work with an equivalent definition of the GHP-distance which is defined using correspondences.

    Let $(X, d_X, \mu_X), (Y, d_Y, \mu_Y) \in \mathbbm{M}$ be two measure metric spaces. We say $R \subset X \times Y$ is a correspondence between $X$ and $Y$ if for all $x \in X$ there exists at least one $y \in Y$ with $(x, y) \in R$, and vice versa. The distortion of a correspondence $R$ is defined as
    $$\operatorname{dis}(R) = \sup_{(x, y), (x', y') \in R} \left| d_X(x, x') - d_Y(y, y') \right|.$$
    Lastly, let $\pi$ be a measure on the product space $(X \times Y)$ and $p_i$ be the projection onto $i$th coordinate for $i = 1,2$. We define the discrepancy of $\pi$ with respect to $\mu_X$ and $\mu_Y$ as
    $$D(\pi; \mu_X, \mu_Y) = ||\pi \circ p_1^{-1} - \mu_X||_{TV} + ||\pi \circ p_2^{-1} - \mu_Y||_{TV},$$
    where $||\mu - \nu||_{TV} = \sup\{|\mu(A) - \nu(A)| : A \subset X \text{ measurable}\}$ for $\mu,\nu$ measures on a measurable space $(X, \mathcal{F}_X)$ is the total variation distance. With these definitions, we can rephrase the GHP-metric as,
    \begin{equation} \label{Equation: GHP topology}d_{GHP}(X,Y) = \inf_{R, \pi} \left\{\max\left(\frac{1}{2}\text{dis}(R), D(\pi; \mu_X, \mu_Y) + \pi(R^{c})\right)\right\},\end{equation}
        where the infimum is taken over all correspondences $R$ between $X$ and $Y$ and all measures on $X \times Y$, and $R^c$ denotes the complement of $R$ in $X \times Y$. For a proof of equivalence between both definitions, we refer to \cite[Theorem 3.6]{WorkGHPdistance}.

    \subsection{Proof outline}
    \label{Section: proof outline}
    We only outline the proof strategy for determining the scaling limit of $\T_n^f$ for $f(n,i) = (i/n)^\beta$, for which we largely adapt Aldous' seminal proof for convergence of a uniformly random labelled tree to the CRT \cite{AldousCRT1}. We first aim to prove
    $$n^{-\frac{\beta}{\beta+1}}\big(C_1^n, \dots, C_k^n, B_1^n, \dots, B_k^n\big) \xrightarrow[n\to\infty]{d}\big(C_1, \dots, C_k, B_1, \dots, B_k\big),$$
    where $C_1 < \dots < C_k$ are the first $k$ points of a PPP of intensity $t^\beta dt$ on $\R_{\ge 0}$, which is shown in Section \ref{Section: convergence of branches polynomial case}. This tells us that the lengths and attachment locations of the branches used in the construction of $\T_n^f$ converge, after appropriate rescaling, to the stick-breaking law of $\T_\beta$. This leads to the following result.
    \begin{equation}
    \label{Equation: finite dimensional distribution}
        \big(\T_n^{(k)}, n^{-\frac{\beta}{\beta+1}}d_n, \nu_n^{(k)}\big) \xrightarrow[n \to \infty]{d} \big(\T_\beta^{(k)}, d, \mu^{(k)}\big),
    \end{equation}
    convergence being in the Gromov--Hausdorff--Prokhorov topology. This result is covered in Section \ref{Section: GH convergence polynomial case}. This shows that as $n \to \infty$, the discrete tree constructed from the first $k$ branches together with uniform measure $\nu_n^{(k)}$ and $\T_\beta^{(k)}$ together with $\mu^{(k)}$ converge in distribution.
    
    To show this holds for the full trees, we have to show that adding the remaining branches does not change the discrete and continuum trees much. Concretely, by \cite[Theorem 4.2]{billingsley1968convergence}, it suffices to show that for all $\epsilon>0$
    \begin{align*}&\lim_{k \to \infty}\pr\big(d_{GHP}\big((\T^{(k)}_\beta, d, \mu^{(k)}), (\T_\beta, d, \mu)\big)>\epsilon\big) = 0,\\
    &\lim_{k \to \infty} \limsup_{n \to \infty}\pr\big(d_{GHP}\big((\T^{(k)}_n, n^{-\frac{\beta}{\beta+1}}d_n, \nu_n^{(k)}), (\T_n^f, n^{-\frac{\beta}{\beta+1}}d_n, \nu_n)\big)>\epsilon\big) = 0.
    \end{align*}
    Since $\T_\beta^{(k)} \subset \T$ and $\T_n^{(k)} \subset \T_n^f$ as measurable metric spaces, we can turn Gromov--Hausdorff--Prokhorov convergence into separate statements on Hausdorff convergence and Prokhorov convergence. Thus Theorem \ref{Theorem: main result 1 (polynomial case)} is shown upon showing the result in \eqref{Equation: finite dimensional distribution} together with the four statements below, for all $\epsilon>0$.
    \begin{enumerate}[i)]
    \item  $\lim_{t\to \infty}\pr\!\left(d_H\left(\T(t), \T\right)>\epsilon\right) = 0;$  \hfill Section \ref{Section: compactness T polynomial case}
    \item  $\lim_{t\to \infty}\limsup_{n \to \infty} \pr\big(d_H\big(\T_n(tn^\frac{\beta}{\beta+1}), \T_n\big)>\epsilon n^\frac{\beta}{\beta+1}\big) = 0;$  \hfill Section \ref{Section: compactness T_n polynomial case}
    \item  $\lim_{k\to \infty}\pr\!\left(d_P\left(\mu^{(k)}, \mu\right)>\epsilon)\right) = 0;$ \hfill Section \ref{Section: convergence measures continuous polynomial case}
    \item  $\lim_{k\to \infty}\limsup_{n \to \infty} \pr\!\left(d_P\left(\nu_n^{(k)}, \nu_n\right)>\epsilon\right) = 0;$ \hfill Section \ref{Section: convergence measures discrete polynomial case}
\end{enumerate}
where in $i)$, we replaced $d_H(\T^{(k)}, \T)$ with $d_H(\T(t), \T)$ as the number of branches of $\T(t)$ is almost surely finite for any $t > 0$. the analogous replacement in ii) follows from the convergence of the rescaled discrete cut times

The proof for finding the scaling limit of $\T_n^g$ where $g(n,i) = \ln^\gamma(in^{-\frac{1}{2}} + 1)n^{-\frac{1}{2}}$ follows the exact same steps as outlined above, with the adaptations of \eqref{Equation: finite dimensional distribution}, i), ii), iii) and iv) proved in Sections \ref{Section: convergence first branches logarithmic case}, \ref{Section: compactness of T logarithmic setting}, \ref{Section: Compactness of T_n logarithmic setting}, \ref{Section: convergence measures continuous logarithmic case} and \ref{Section: convergence measures continuous logarithmic case} respectively. 

\section{The polynomial case}
\label{Section: the polynomial case}

Recall that $\T_{n}^f$ can be constructed by iteratively attaching branches of length $C_j^n - C_{j-1}^n$ where the $C_j^n$'s are the indices for which $R_i = 1$ with $R_i \sim \text{Ber}(f(n,i))$ for $f(n,i) = (i/n)^\beta$. The branches are attached to the previously built tree at the vertex with label $B_i^n$ for $B_i^n \sim \text{Unif}([C_i^n])$. Throughout this section, we let $C_1 < C_2 < \dots$ denote the ordered points of a PPP of intensity $t^\beta dt$ on $\R_{\ge 0}$ and conditional on the $C_j$'s let $B_i \sim \text{Unif}([0,C_i])$ independently.
\subsection{Convergence of branch and attachment points -- polynomial case}
\label{Section: convergence of branches polynomial case}
The goal of the current section is proving the following convergence in distribution.
\begin{prop}
\label{Proposition: Convergence repeat/attachment points polynomial case}
    For all $k \in \N$, we have,
    $$n^{-\frac{\beta}{\beta+1}}\big(C_1^n, \dots, C_k^n, B^n_1, \dots, B^n_k\big) \xrightarrow[n \to \infty]{d} \big(C_1, \dots, C_k, B_1, \dots, B_k\big).$$
\end{prop}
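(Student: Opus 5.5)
We will prove the convergence in two stages: first for the rescaled cut times alone, and then for the attachment points conditionally on the cut times. Write $a_n = n^{\beta/(\beta+1)}$, so that $f(n,i) = (i/n)^\beta = (i/a_n)^\beta\, a_n^{-(\beta+1)}\cdot a_n \cdot a_n^{\beta}/n^{\beta}$; more usefully, for $i = \lfloor s a_n\rfloor$ one has $f(n,i) = (s a_n/n)^\beta(1+o(1)) = s^\beta a_n^{-1}(1+o(1))$, since $a_n^{\beta+1} = n^\beta$. Thus the Bernoulli variables $R_i$ with $i\le T a_n$ form a triangular array of rare events whose intensity, viewed on the time scale $a_n$, is $s^\beta\,ds$.

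\textbf{Step 1: the point process of rescaled cut times.} Let $\eta_n = \sum_{i=1}^{n-1} R_i \delta_{i/a_n}$. We show that $\eta_n$ converges in distribution to the PPP $\eta$ of intensity $t^\beta dt$ on $[0,T]$ for every fixed $T$. Since the $R_i$ are independent, this can be done directly with Laplace functionals. For continuous $\phi\ge 0$ supported in $[0,T]$,
\[
\E\, e^{-\eta_n(\phi)} = \prod_{i\le Ta_n}\Big(1 - f(n,i)\big(1-e^{-\phi(i/a_n)}\big)\Big).
\]
Because $\max_{i\le Ta_n} f(n,i) \le T^\beta a_n^{-1}\to 0$, taking logarithms gives
\[
-\sum_i f(n,i)\big(1-e^{-\phi(i/a_n)}\big) + O\Big(\sum_i f(n,i)^2\Big).
\]
The error term is $O(T^{2\beta+1}a_n^{-1})\to 0$. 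The main sum is a Riemann sum converging to $\int_0^T (1-e^{-\phi(t)})\,t^\beta\,dt$. This proves convergence of the point processes on $[0,T]$.

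Since $\eta[0,T]<\infty$ almost surely and the points of $\eta$ are almost surely distinct, the map from a point configuration to its first $k$ points is continuous at $\eta$ on the event that $\eta$ has at least $k$ points in $[0,T)$. The probability that $\eta$ has fewer than $k$ points in $[0,T)$ tends to $0$ as $T\to\infty$, because $\int_0^\infty t^\beta dt = \infty$. Hence $a_n^{-1}(C_1^n,\dots,C_k^n)\to (C_1,\dots,C_k)$ in distribution. Alternatively, one can compute joint densities directly: the probability that the first $k$ cut times equal $i_1<\dots<i_k$ is $\prod_j f(n,i_j)\prod_{i\notin\{i_j\},\, i\le i_k}(1-f(n,i))$, which after rescaling converges to $\prod_j t_j^\beta\, e^{-t_k^{\beta+1}/(\beta+1)}$.

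\textbf{Step 2: the attachment points.} Conditional on the cut times, the $B_j^n$ are independent with $B_j^n\sim \mathrm{Unif}([C_j^n])$. We can therefore couple them as $B_j^n = \lceil U_j C_j^n\rceil$ with $U_j$ i.i.d.\ uniform on $[0,1]$, independent of everything else; similarly $B_j = U_j C_j$. Then
\[
a_n^{-1}B_j^n = U_j\, a_n^{-1}C_j^n + O(a_n^{-1}).
\]
The map $(c_1,\dots,c_k,u_1,\dots,u_k)\mapsto (c,\, u\cdot c)$ is continuous. By Step 1 and independence of the $U_j$, the vector $(a_n^{-1}C^n, U)$ converges jointly in distribution to $(C,U)$. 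The continuous mapping theorem, together with Slutsky's lemma to absorb the $O(a_n^{-1})$ error, then yields the proposition.

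\textbf{Main obstacle.} The main point requiring care is that $C_j^n$ may equal $n$ (the convention when fewer than $j$ cuts occur), so the extraction of the first $k$ points must be handled. This is taken care of by the tightness argument in Step 1: we show $\pr(C_k^n > Ta_n)\to \pr(C_k>T)$, which is small for large $T$. The Riemann sum and logarithm estimates are routine.
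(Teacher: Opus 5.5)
Your proof is correct, but it takes a different route from the paper.

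\textbf{The paper's route.} The paper first proves a local limit statement: $n^{k\beta/(\beta+1)}\pr\big(C_i^n=\lfloor s_i n^{\beta/(\beta+1)}\rfloor,\ i\le k\big)$ converges uniformly on compacts to the density $s_1^\beta\cdots s_k^\beta e^{-s_k^{\beta+1}/(\beta+1)}$. It turns this into convergence of the joint distribution function by writing the lattice sum as an integral of a step function. It then handles the attachment points by showing $\pr\big(B_i^n\le t_iC_i^n\ \forall i \mid C_i^n\le s_i n^{\beta/(\beta+1)}\ \forall i\big)\to t_1\cdots t_k$, which needs $C_1^n\to\infty$ in probability under the conditioning. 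This is essentially the ``alternative'' you mention at the end of Step 1.

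\textbf{Your cut times.} Your Laplace-functional argument replaces the local estimate and the sum-to-integral lemma by a single Riemann sum. It also deals with the convention $C_j^n=n$ transparently via the event $\{\eta_n[0,T)\ge k\}$. It transfers verbatim to the logarithmic intensity, or to any $g$ with $a_n g(n,\lfloor t a_n\rfloor)\to f(t)$ locally uniformly. The cost is quoting standard point-process facts: Laplace functionals determine convergence in distribution of random measures, and the first-$k$-points map is continuous at simple configurations with at least $k$ points.

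\textbf{Your attachment points.} The coupling $B_j^n=\lceil U_jC_j^n\rceil$, $B_j=U_jC_j$ is cleaner than the paper's conditional computation. It gives joint convergence of $(C^n,B^n)$ directly by continuous mapping, whereas the paper leaves implicit how its two statements combine (essentially through the distribution function of $(C,B/C)$). It also needs no divergence of $C_1^n$, since the rounding error after rescaling is at most $a_n^{-1}$ deterministically.

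The paper's route, in exchange, is entirely elementary and gives the stronger local limit.

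One small slip: your first displayed identity for $f(n,i)$ has unbalanced powers of $a_n$. Its right-hand side equals $(i/a_n)^\beta n^{-\beta}$, not $(i/n)^\beta$. The relation you actually use, $(i/n)^\beta=(i/a_n)^\beta a_n^{-1}$, is correct.
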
 
In order to prove the above result, it suffices to show,
\begin{align*}
        &i) \;\; \pr\big(C_1^n \le s_1n^\frac{\beta}{\beta+1}, \dots, C_k^n \le s_k n^\frac{\beta}{\beta+1}\big) \xrightarrow[n \to \infty]{}\pr\big(C_1 \le s_1, \dots, C_k \le s_k\big),\\
        &ii) \; \pr\Big(B_1^n\le t_1C^n_1,\dots, B_k^n \le t_kC^n_k \; \Big|\; C_1^n\le s_1n^{\frac{\beta}{\beta+1}}, \dots, C_k^n\le s_{k}n^{\frac{\beta}{\beta+1}}\Big),\\
        &\xrightarrow[n \to \infty]{} \pr\Big(B_1\le t_1C_1,\dots, B_k\le t_kC_k \; \Big|\; C_1\le s_1, \dots, C_k\le s_{k}\Big).
    \end{align*}
    for $0 < s_1 < \dots < s_k$ and $t_1, \dots, t_k \in [0,1)$. We prove these two statements in the following four lemmas.

\begin{lem}
\label{Lemma: convergence to pdf first points PPP}
    For all $0 < s_1 < \dots < s_k$, we have
    $$n^{\frac{k\beta}{\beta+1}}\:\pr\!\left(C_1^n = \floor{s_1n^\frac{\beta}{\beta+1}}, \dots, C_k^n = \floor{s_kn^\frac{\beta}{\beta+1}}\right) \xrightarrow[n \to \infty]{\text{u.c.}} g_{C_1, \dots, C_k}(s_1, \dots, s_k),$$
    where u.c. denotes uniform convergence over compact sets and $g_{C_1, \dots, C_k}(s_1, \dots, s_k)$ is the joint density of the first $k$ ordered points of a PPP of intensity $t^\beta dt$.
\end{lem}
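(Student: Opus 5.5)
Set $m_j = \floor{s_j n^{\frac{\beta}{\beta+1}}}$ and $a = n^{\frac{\beta}{\beta+1}}$, so that $n = a^{(\beta+1)/\beta}$ and $f(n,i) = (i/n)^\beta = i^\beta a^{-(\beta+1)}$. We write out the probability exactly and take logarithms.

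Since the $R_i$ are independent Bernoulli variables, the event $\{C_1^n=m_1,\dots,C_k^n=m_k\}$ (for $n$ large, so that $m_1<\dots<m_k<n$) is exactly the event that $R_{m_j}=1$ for $j\le k$ and $R_i=0$ for all other $i\le m_k$. Hence
\[
\pr\big(C_1^n=m_1,\dots,C_k^n=m_k\big)=\prod_{j=1}^k f(n,m_j)\prod_{\substack{i\le m_k\\ i\notin\{m_1,\dots,m_k\}}}\big(1-f(n,i)\big).
\]
The target density is $g(s_1,\dots,s_k)=\prod_{j=1}^k s_j^\beta\,\exp\!\big(-s_k^{\beta+1}/(\beta+1)\big)$ on $0<s_1<\dots<s_k$. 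This is the standard formula for the first $k$ points of a PPP with intensity $t^\beta dt$: there are no points in $[0,s_k]$ other than the $s_j$, so the avoidance probability is $e^{-\Lambda(s_k)}$.

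\emph{Prefactor.} We have $a^k\prod_j f(n,m_j) = \prod_j (m_j/a)^\beta \cdot a^{k}a^{-k(\beta+1)}\cdot a^{k\beta} = \prod_j (m_j/a)^\beta$. Since $|m_j/a - s_j|\le 1/a$, this converges to $\prod_j s_j^\beta$ uniformly on compact sets.

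\emph{Product of avoidance terms.} On a compact set $K\subset\{0<s_1<\dots<s_k\}$ we have $s_k\le S$, so $f(n,i)\le (Sa/n)^\beta = S^\beta a^{-1}\to 0$ uniformly for all $i\le m_k$. We may therefore use $\ln(1-x) = -x + O(x^2)$. The quadratic error is bounded by $m_k\cdot S^{2\beta}a^{-2}\le S^{2\beta+1}a^{-1}\to 0$. Removing the at most $k$ excluded indices changes the sum by at most $kS^\beta a^{-1}\to 0$. The main term is $\sum_{i\le m_k} i^\beta a^{-(\beta+1)}$, a Riemann sum equal to $\int_0^{m_k/a} t^\beta dt + O(S^\beta/a)$ by monotonicity of $t^\beta$. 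This converges uniformly to $s_k^{\beta+1}/(\beta+1)$. Exponentiating, and using that $\exp$ is uniformly continuous on bounded sets, yields uniform convergence of the product to $e^{-s_k^{\beta+1}/(\beta+1)}$.

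The only mild obstacle is making every error term uniform over the compact set. This is handled by fixing the bound $S$ on $s_k$ and a lower bound on the gaps $s_{j+1}-s_j$; the gap bound guarantees that the $m_j$ are distinct for large $n$, uniformly on $K$. All errors above depend only on $S$, $k$ and $a$, which completes the argument. If uniform convergence is meant on compacts of the closed region, the boundary points where some $m_j$ coincide can be handled by noting that both sides are bounded there; this is not needed for the subsequent local limit argument.
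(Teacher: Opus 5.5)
Your proof is correct and follows the paper's route. You write the probability exactly, show the prefactor $\prod_j (m_j/a)^\beta$ tends to $\prod_j s_j^\beta$, and treat the avoidance product via $\ln(1-x)=-x+O(x^2)$ and a Riemann sum. The only differences are that you drop the $k$ cut indices from the product instead of dividing the full product by $\prod_j\bigl(1-f(n,m_j)\bigr)$, and that you make uniformity over compact subsets of $\{0<s_1<\dots<s_k\}$ more explicit than the paper does.

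Your closing aside should be rephrased. Near a diagonal $s_j=s_{j+1}>0$ the left side vanishes whenever $m_j=m_{j+1}$, while $g$ stays bounded away from $0$, so uniform convergence genuinely fails on compacts of the closed region. What boundedness of both sides gives you is dominated convergence, since the diagonals are Lebesgue-null. That is what the subsequent integration over $\{y_1<\dots<y_k,\ y_i\le s_i\}$ actually needs.
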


\begin{proof}
    Recall that $g_{C_1, \dots, C_k}(s_1, \dots, s_k) = s_1^\beta \dots s_k^\beta e^{-\frac{s_k^{\beta + 1}}{\beta + 1}}$ is the density of the first $k$ ordered points of a PPP of intensity $t^\beta dt$ on $\R_{\ge 0}$. Also $\big\{C_1^n = \floor{s_1n^\frac{\beta}{\beta+1}}, \dots, C_k^n = \floor{s_kn^\frac{\beta}{\beta+1}}\big\}$ happens precisely when the Bernoulli random variables $R_i$ satisfy
    $$R_i = \begin{cases}
        1 &\text{ if } i \in \big\{\floor{s_1n^\frac{\beta}{\beta+1}}, \dots, \floor{s_kn^\frac{\beta}{\beta+1}}\big\}\\
        0 &\text{ if } i \in [\floor{s_k n^\frac{\beta}{\beta+1}}] \setminus \big\{\floor{s_1n^\frac{\beta}{\beta+1}}, \dots, \floor{s_kn^\frac{\beta}{\beta+1}}\big\}.
    \end{cases}$$
    Hence we obtain,
    \begin{align}
    \label{Equation: computations cut points converge to pdf PPP points}
        n^{\frac{k\beta}{\beta+1}}\:\pr\!&\left(C_1^n = \floor{s_1n^\frac{\beta}{\beta+1}}, \dots, C_k^n = \floor{s_kn^\frac{\beta}{\beta+1}}\right)\!,\notag\\
        &= n^{\frac{k\beta}{\beta+1}}\prod_{i=1}^k \frac{f\big(n, \floor{s_in^\frac{\beta}{\beta+1}}\big)}{1 - f\big(n, \floor{s_in^\frac{\beta}{\beta+1}}\big)} \hspace{-6pt}\prod_{i=1}^{\floor{s_kn^\frac{\beta}{\beta+1}}}\hspace{-6pt}(1 - f(n,i)),\notag\\
        &= \prod_{i=1}^k \left(n^{\frac{\beta}{\beta+1}}\frac{\floor{s_in^\frac{\beta}{\beta+1}}}{n^\frac{\beta}{\beta+1}}\right)^\beta \prod_{i=1}^{\floor{s_kn^\frac{\beta}{\beta+1}}} \left(1 - \left(\frac{i}{n}\right)^\beta \right)\prod_{i=1}^k \left(1 - \left(\frac{\floor{s_in^\frac{\beta}{\beta+1}}}{n}\right)^\beta\right)^{-1},\notag\\
        &=(1 + o(1))s_1^\beta \cdots s_k^\beta \prod_{i=1}^{\floor{s_kn^\frac{\beta}{\beta+1}}} \left(1 - \left(\frac{i}{n}\right)^\beta\right)
    \end{align}
    where the above convergence is uniform over compact sets. Thus it only remains to show
    $$\prod_{i=1}^{\floor{s_kn^\frac{\beta}{\beta+1}}}\left(1 - \left(\frac{i}{n}\right)^\beta \right)\xrightarrow[n \to \infty]{u.c.} e^{-\frac{s_k^{\beta + 1}}{\beta + 1}}.$$
    To this end, we take logarithms and use first-order Taylor expansion $\ln(1 - x) = -x + O(x^2)$ as $x \to 0$ where this expansion is uniform on compact sets. Using this, we obtain,
    \begin{align*}
        \prod_{i=1}^{\floor{s_kn^\frac{\beta}{\beta+1}}}\left(1 - \left(\frac{i}{n}\right)^\beta\right)&= \exp\left(\sum_{i = 1}^{\floor{s_kn^\frac{\beta}{\beta+1}}} \ln\left(1 - \left(\frac{i}{n}\right)^{\beta}\right)\right),\\
        &=\exp\left(-n^{-\beta}\sum_{i=1}^{\floor{s_kn^\frac{\beta}{\beta+1}}} i^\beta + O\left(n^{-2\beta}\sum_{i=1}^{\floor{s_kn^\frac{\beta}{\beta+1}}}i^{2\beta}\right)\right),\\
        &= \exp\left(-\frac{s_k^{\beta+1}}{\beta+1} + o(1)\right)
    \end{align*}
    convergence again being uniform over compact sets. This concludes the proof. 
\end{proof}

Before continuing with the proof of Proposition \ref{Proposition: Convergence repeat/attachment points polynomial case}, we state the following elementary result.

\begin{lem}
\label{uniform on compact sets and integrals}
    Let $K \subset \R^k$ be compact and $A,A_n \subset K$ be Borel measurable sets. Suppose $\lambda(A_n \Delta A) \to 0$ as $n \to \infty$, for $\lambda$ the Lebesgue measure and that $g_n: K \to \R$ is measurable and converges uniformly on $K$ to measurable and integrable $g: K \to \R$. Then,
    $$\int_{A_n} g_n(x) dx \xrightarrow[n \to \infty]{} \int_{A} g(x)dx.$$
\end{lem}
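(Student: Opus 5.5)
The plan is to split the difference into two pieces and control each separately:
\[
\Bigl|\int_{A_n} g_n\,dx - \int_A g\,dx\Bigr| \le \int_{A_n} |g_n - g|\,dx + \Bigl|\int_{A_n} g\,dx - \int_A g\,dx\Bigr| .
\]

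For the first term, we use that $A_n \subset K$. It is therefore at most $\lambda(K)\sup_{x\in K}|g_n(x)-g(x)|$. Here $\lambda(K)<\infty$ because $K$ is compact, and the supremum tends to $0$ by uniform convergence. Measurability of $g_n$ and $g$ makes all integrals well defined. Integrability of $g_n$ on $A_n$ follows for large $n$, since $|g_n|\le |g|+1$ on $K$ eventually.

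For the second term, we bound it by $\int_{A_n\Delta A}|g|\,dx = \int_K |g|\,\mathbbm{1}_{A_n\Delta A}\,dx$. This tends to $0$ by absolute continuity of the integral, because $g\in L^1(K)$ and $\lambda(A_n\Delta A)\to 0$. Concretely, given $\epsilon>0$, choose $\delta>0$ such that $\lambda(E)<\delta$ implies $\int_E|g|<\epsilon$. This $\delta$ exists by a standard argument: truncate $|g|$ at level $M$, and use dominated convergence to make $\int_K (|g|-M)^+$ small. Then take $n$ large enough that $\lambda(A_n\Delta A)<\delta$.

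Combining the two estimates gives the claim. There is no real obstacle here. The only point needing care is the uniform absolute continuity step, since $g$ is only assumed integrable rather than bounded. If $g$ is bounded, as it is in the application to the densities from Lemma \ref{Lemma: convergence to pdf first points PPP}, the second term is simply at most $\sup_K|g|\cdot\lambda(A_n\Delta A)$.
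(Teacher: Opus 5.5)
Your proof is correct and complete. The paper gives no proof of this lemma (it only calls it elementary), and your split into $\int_{A_n}|g_n-g|\,dx\le\lambda(K)\sup_K|g_n-g|$ plus $\int_{A_n\Delta A}|g|\,dx$, with the second term controlled by absolute continuity of the integral via truncation, is the standard argument that fills this in; your remark that the second term is at most $\sup_K|g|\cdot\lambda(A_n\Delta A)$ when $g$ is bounded also covers the paper's application, where $g$ is a continuous density on a compact set.
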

Using the above Lemma, we can proof $i)$ of Proposition \ref{Proposition: Convergence repeat/attachment points polynomial case}.
\begin{lem}
\label{convergence of repeat probabilities}
For $0 < s_1 < \dots < s_k$, we have,
    $$\pr\Big(C_1^n \le s_1n^{\frac{\beta}{\beta+1}}, \dots, C_k^n \le s_{k}n^{\frac{\beta}{\beta+1}}\Big) \xrightarrow[n \to \infty]{} \pr\big(C_1 \le s_1, \dots, C_k \le s_k\big).$$
\end{lem}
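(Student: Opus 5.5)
We plan to write the discrete probability as a sum of point masses, turn that sum into an integral of a step function, and then apply Lemma \ref{uniform on compact sets and integrals} with the local limit from Lemma \ref{Lemma: convergence to pdf first points PPP}. Set $a_n = n^{\beta/(\beta+1)}$. The event $\{C_1^n\le s_1a_n,\dots,C_k^n\le s_ka_n\}$ is the disjoint union, over integer vectors $1\le c_1<\dots<c_k$ with $c_j\le s_ja_n$, of the events $\{C_1^n=c_1,\dots,C_k^n=c_k\}$.

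For $x\in\R^k$ define the step function
$$g_n(x) = a_n^{k}\,\pr\big(C_1^n = \floor{x_1a_n},\dots,C_k^n=\floor{x_ka_n}\big).$$
Each cube $\prod_j [c_j/a_n,(c_j+1)/a_n)$ has Lebesgue measure $a_n^{-k}$. Hence the probability above equals $\int_{A_n} g_n(x)\,dx$, where $A_n$ is the union of those cubes with $c_1<\dots<c_k$, $c_1\ge 1$ and $c_j\le s_ja_n$. The target probability equals $\int_A g_{C_1,\dots,C_k}(x)\,dx$, with $A=\{0<x_1<\dots<x_k,\ x_j\le s_j\}$.

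The sets $A_n$ and $A$ are contained in the compact set $[0,s_k+1]^k$, and $\lambda(A_n\Delta A)\to 0$. Indeed, the symmetric difference lies within distance $O(1/a_n)$ of the boundary of $A$, which is a finite union of hyperplane pieces. So this step is routine.

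The main obstacle is that Lemma \ref{Lemma: convergence to pdf first points PPP} gives uniform convergence only on compact subsets of the open region $\{0<s_1<\dots<s_k\}$, not on all of $K$. Near the diagonals $x_j=x_{j+1}$ and near $x_1=0$ the floors can coincide or vanish, and uniformity fails there. We handle this by truncation. Fix $\delta>0$ and let $K_\delta=\{x\in A: x_1\ge\delta,\ x_{j+1}-x_j\ge\delta\}$, which is compact inside the open region. On $K_\delta$ we apply Lemma \ref{uniform on compact sets and integrals} to $A_n\cap K_\delta$ and $A\cap K_\delta$.

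It remains to show that the leftover mass is small uniformly in $n$. We bound $g_n$ by a constant on $[0,s_k+1]^k$ using the explicit formula in \eqref{Equation: computations cut points converge to pdf PPP points}: the exponential product is at most $1$, and the factor $f/(1-f)$ is bounded because $c/n\to 0$ uniformly for $c\le (s_k+1)a_n$. This gives $g_n\le C\prod_j(x_j+1/a_n)^\beta\le C'$. The strip $A_n\setminus K_\delta$ has Lebesgue measure $O(\delta)+o(1)$, so its contribution is $O(\delta)+o(1)$. The same bound holds for $A\setminus K_\delta$ under the bounded density $g_{C_1,\dots,C_k}$.

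Letting $n\to\infty$ and then $\delta\to 0$ gives the claim. Alternatively, one could avoid the truncation by noting that the convergence in Lemma \ref{Lemma: convergence to pdf first points PPP} is pointwise almost everywhere with a uniform bound, and then invoke dominated convergence; we would state this as a remark.
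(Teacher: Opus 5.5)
Your proposal is correct and follows essentially the same route as the paper: you write the probability as the integral of the rescaled step function $n^{k\beta/(\beta+1)}\pr\big(C_1^n=\floor{y_1n^{\beta/(\beta+1)}},\dots,C_k^n=\floor{y_kn^{\beta/(\beta+1)}}\big)$ over a set converging in measure to $\{0<y_1<\dots<y_k,\ y_j\le s_j\}$, and pass to the limit using Lemma~\ref{uniform on compact sets and integrals} together with Lemma~\ref{Lemma: convergence to pdf first points PPP}. Your truncation away from the diagonals, backed by the uniform bound on the step function (or the dominated-convergence variant), is a welcome extra: the paper applies Lemma~\ref{uniform on compact sets and integrals} directly and does not address that the local limit is uniform only on compact subsets of the open region.
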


\begin{proof}
\label{proof in which we go from sum to integral}
    Write $s_i^n = (\floor{s_i n^\frac{\beta}{\beta+1}} + 1)n^{-\frac{\beta}{\beta+1}}$ and $y_i^n = (\floor{y_i n^\frac{\beta}{\beta+1}} + 1)n^{-\frac{\beta}{\beta+1}}$. Using Lemma \ref{uniform on compact sets and integrals} together with Lemma \ref{Lemma: convergence to pdf first points PPP}, we get,
\begin{align*}
\label{sums converging to integrals}
    \pr\big(&C_1^n \le s_1n^{\frac{\beta}{\beta+1}}, \dots, C_k^n \le s_kn^{\frac{\beta}{\beta+1}}\big) =\hspace{-5pt} \sum_{x_1=1}^{\floor{s_1n^{\frac{\beta}{\beta+1}}}}\sum_{x_2 = x_1+1}^{\floor{s_2n^{\frac{\beta}{\beta+1}}}} \hspace{-4pt}\dots\hspace{-4pt} \sum_{x_k = x_{k-1}+1}^{\floor{s_kn^{\frac{\beta}{\beta+1}}}} \hspace{-3pt}\pr\big(C_1^n = x_1, \dots C_k^n = x_k\big),\\
    &=\int_{1}^{\floor{s_1n^{\frac{\beta}{\beta+1}}}+1}\hspace{-2pt}\int_{\floor{x_1} + 1}^{\floor{s_2n^{\frac{\beta}{\beta+1}}}+1} \hspace{-2pt} \dots \int_{\floor{x_{k-1}}+1}^{\floor{s_kn^{\frac{\beta}{\beta+1}}}+1} \pr\big(C_1^n = \floor{x_1}, \dots C_k^n = \floor{x_k}\big)dx_k \dots dx_2dx_1,\\
    &=\int_{n^{-{\frac{\beta}{\beta+1}}}}^{s_1^n}\hspace{-3pt}\int_{y_1^n}^{s_2^n} \hspace{-3pt}\dots \int_{y_{k-1}^n}^{s_k^n} n^\frac{\beta k}{\beta + 1}\pr\big(C_1^n = \floor{y_1n^{\frac{\beta}{\beta+1}}}, \dots, C_k^n = \floor{y_kn^{\frac{\beta}{\beta+1}}}\big)dy_k\dots dy_2dy_1,\\
    &\xrightarrow[n \to \infty]{}\int_0^{s_1} \int_{y_1}^{s_2} \dots \int_{y_{k-1}}^{s_k} y_1^\beta \dots y_k^\beta e^{-\frac{y_k^{\beta+1}}{\beta+1}}dy_k \dots dy_2 dy_1,\\
    &= \pr(C_1 \le s_1, \dots, C_k \le s_k),
\end{align*}
where we used the substitution $x_i = n^{\frac{\beta}{\beta+1}}y_i$ and recognize $y_1^\beta \cdots y_k^\beta e^{-\frac{y_k^{\beta+1}}{\beta+1}}$ as the joint density of the first~$k$ ordered points of a PPP of intensity $t^\beta dt$.
\end{proof}

\begin{lem}
\label{Lemma: attachment points are uniform}
For $t_1, \dots, t_k \in [0,1)$ and $0<s_1<\dots<s_k$, we have,
\begin{align*}
\pr\big(B_1^n\le t_1C^n_1,\dots, &B_k^n\le t_kC^n_k \; \big|\; C_1^n \le s_1n^{\frac{\beta}{\beta+1}}, \dots, C_k^n\le s_kn^{\frac{\beta}{\beta+1}}\Big)\\
\xrightarrow[n \to \infty]{} &\pr\big(B_1\le t_1C_1,\dots, B_k\le t_kC_k \; \big|\; C_1 \le s_1, \dots, C_k \le s_k\big).
\end{align*}
\end{lem}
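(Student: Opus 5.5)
Conditioning on the cut times reduces the statement to a finite product of discrete uniform probabilities. The key input is the observation from Section~\ref{Section: preliminaries and proof outline}: conditional on $C_1^n,\dots,C_k^n$, the attachment points $B_1^n,\dots,B_k^n$ are independent with $B_i^n\sim\mathrm{Unif}([C_i^n])$. Likewise, conditional on $C_1,\dots,C_k$, the $B_i$ are independent with $B_i\sim\mathrm{Unif}([0,C_i])$. For $t\in[0,1)$ we therefore have
\[
\pr\big(B_i\le t_iC_i\mid C_1,\dots,C_k\big)=t_i ,
\]
so the right-hand side of the lemma equals $\prod_{i=1}^k t_i$, independently of the $s_i$. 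On the discrete side,
\[
\pr\big(B_i^n\le t_iC_i^n\mid C_1^n,\dots,C_k^n\big)=\frac{\floor{t_iC_i^n}}{C_i^n}\in\Big[t_i-\frac{1}{C_i^n},\,t_i\Big].
\]

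I will write the conditional probability on the left as
\[
\frac{\E\big[\prod_{i=1}^k \floor{t_iC_i^n}/C_i^n\;;\;A_n\big]}{\pr(A_n)},
\qquad A_n=\{C_i^n\le s_in^{\frac{\beta}{\beta+1}}\ \forall i\le k\}.
\]
By Lemma~\ref{convergence of repeat probabilities}, $\pr(A_n)\to\pr(C_1\le s_1,\dots,C_k\le s_k)$. This limit is strictly positive because the joint density $g_{C_1,\dots,C_k}$ is positive on $\{0<y_1<\dots<y_k\}$. It therefore suffices to show that the numerator converges to $\prod_i t_i$ times this limit. The product inside the expectation lies between $\prod_i(t_i-1/C_i^n)_+$ and $\prod_i t_i$. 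Hence the only thing to control is the event on which some $C_i^n$ is small, and since $C_1^n<\dots<C_k^n$, it is enough to control $C_1^n$.

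The main (mild) obstacle is to show that $C_1^n\to\infty$ in probability, so that $1/C_i^n\to0$ in probability. I will get this from Lemma~\ref{convergence of repeat probabilities} with $k=1$: for any fixed $\delta>0$, $\pr(C_1^n\le\delta n^{\frac{\beta}{\beta+1}})\to\pr(C_1\le\delta)$, and the latter tends to $0$ as $\delta\downarrow0$. Alternatively, directly, $\pr(C_1^n\le M)\le\sum_{i\le M}(i/n)^\beta\to0$ for fixed $M$. Splitting the numerator according to whether $C_1^n>M$ then gives
\[
\Big|\E\Big[\prod_i\frac{\floor{t_iC_i^n}}{C_i^n};A_n\Big]-\prod_i t_i\,\pr(A_n)\Big|\le \frac{k}{M}+\pr(C_1^n\le M).
\]
Here I use that for numbers in $[0,1]$ the difference of two products is at most the sum of the coordinatewise differences. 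Letting $n\to\infty$ and then $M\to\infty$ yields the convergence of the numerator, and dividing by $\pr(A_n)$ concludes the proof.
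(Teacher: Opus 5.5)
Your proposal is correct and follows essentially the same route as the paper. You use conditional independence of the attachment points to reduce to $\E\big[\prod_i \lfloor t_iC_i^n\rfloor/C_i^n \mid A_n\big]$. You then show $C_1^n\to\infty$ in probability under the conditioning, via $\pr(C_1^n\le M)\le\sum_{i\le M}(i/n)^\beta$ and division by $\pr(A_n)$, which stays bounded away from $0$ by Lemma~\ref{convergence of repeat probabilities}.

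Your version also makes explicit two points the paper leaves implicit: that $\pr(C_1\le s_1,\dots,C_k\le s_k)>0$, and the quantitative bound $k/M+\pr(C_1^n\le M)$ on the numerator error.
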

\begin{proof}
Write $\mathcal{C}_k^n(s) = \big\{C_1^n \le s_1n^\frac{\beta}{\beta+1}, \dots, C_k^n \le s_kn^\frac{\beta}{\beta+1}\big\}$ where $s = (s_1, \dots, s_k)$. Since we have $\pr\big(B_1\le t_1C_1,\dots, B_k\le t_kC_k \; \big|\; C_1 \le s_1, \dots, C_k \le s_k\big) = t_1 \cdots t_k$, it suffices to show $\pr\big(B_1^n \le t_1C_1^n, \dots, B_k^n \le t_kC_k^n \; \big| \; \mathcal{C}_k^n(s)\big) \xrightarrow[n \to \infty]{} t_1 \cdots t_k$. Using the tower property of expectation, we obtain,
\begin{align*}
    \pr\big(B_1^n \le t_1C_1^n, \dots, B_k^n \le t_kC_k^n \; \big| \; \mathcal{C}_k^n(s)\big) &= \E\!\!\left[\prod_{i=1}^k \frac{\floor{t_iC_i^n}}{C_i^n} \; \Big| \; \mathcal{C}_k^n(s)\right]\!\!,\\
    &= \E\!\big[t_1 \cdots t_k + O(1/C_1^n) \; \big| \; \mathcal{C}_k^n(s)\big].
\end{align*}
It remains to show $C_1^n$ diverges in probability. For this, let $M>0$ be fixed. Then,
$$\pr(C_1^n \le M\; | \; \mathcal{C}_k^n(s)) \le \pr(C_1^n \le M)/\pr(\mathcal{C}_k^n(s)) = O\left(\sum_{i=1}^M f(n,i)\right) = O(n^{-\beta})$$
and hence the result is shown.
\end{proof}

\begin{proof}[proof of Proposition \ref{Proposition: Convergence repeat/attachment points polynomial case}]
    Follows directly from Lemma \ref{Lemma: attachment points are uniform} together with Lemma \ref{convergence of repeat probabilities}.
\end{proof}

\subsection{Gromov--Hausdorff--Prokhorov convergence}
\label{Section: GH convergence polynomial case}
We have shown that the law of the first $k$ cut points of the branches in $\T_n^f$ converge to that of the first $k$ points of a Poisson process of intensity $t^\beta dt$ on $\R_{\ge 0}$. Hence, the length of the first $k$ sticks of $\T_n$ and $\T$ share the same distribution after rescaling. Since in both cases the sticks get attached uniformly over the already constructed tree, we should expect the trees to look the same. This is formalized in the statement below, which will be proved in this section.

\begin{thm}
\label{Theorem: finite tree convergence}
    We have $\big(\T_n^{(k)}, n^{-{\frac{\beta}{\beta+1}}}d_n, \nu_n^{(k)}\big) \xrightarrow[n \to \infty]{d} \big(\T^{(k)}, d, \mu^{(k)}\big)$ in the GHP-topology.
\end{thm}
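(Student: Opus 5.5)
We will deduce Theorem~\ref{Theorem: finite tree convergence} from Proposition~\ref{Proposition: Convergence repeat/attachment points polynomial case} by a continuous-mapping argument, made quantitative through an explicit correspondence and coupling. Note first that the finite tree $\T^{(k)}$ with its measure $\mu^{(k)}$ is a deterministic function $\Phi_k$ of the vector $(C_1,\dots,C_k,B_1,\dots,B_{k-1})$. Concretely, $\Phi_k$ is the image of $[0,C_k]$ under $\rho$, together with the pushforward of normalized Lebesgue measure. By Skorokhod's representation theorem, we may assume that the rescaled discrete vector $x^n:=n^{-\beta/(\beta+1)}(C_1^n,\dots,C_k^n,B_1^n,\dots,B_{k-1}^n)$ converges almost surely to $x:=(C_1,\dots,C_k,B_1,\dots,B_{k-1})$. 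On this probability space, it then suffices to show that $d_{GHP}\big((\T_n^{(k)},n^{-\beta/(\beta+1)}d_n,\nu_n^{(k)}),\Phi_k(x)\big)\to0$ almost surely. We also note that $C_k^n\le n$ is eventually automatic, since $C_k<\infty$ almost surely; so with high probability there are at least $k$ sticks.

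The key step is to compare the discrete tree with a continuum tree built from the same discrete data. Define $\rho_n:[0,C_k^n]\to\ell^1$ by the recipe for $\rho$, with cut points $C_j^n$ and attachment points $B_j^n$, and rescale by $n^{-\beta/(\beta+1)}$. The vertex labelled $i$ of $\T_n^{(k)}$ sits at $\rho_n(i)$. The tree distance $d_n(i,i')$ agrees with $\|\rho_n(i)-\rho_n(i')\|_1$ up to an error of at most a constant times $k$: the discrete branch $j+1$ starts at vertex $C_j^n+1$ adjacent to $B_j^n$, so each stick is offset by one edge. After rescaling this error is $O(kn^{-\beta/(\beta+1)})\to0$. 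So $\T_n^{(k)}$ is GHP-close to $\Phi_k(x^n)$ equipped with the pushforward of the uniform measure on the lattice points $\{1,\dots,C_k^n\}$. That lattice measure is Prokhorov-close to the pushforward of normalized Lebesgue measure on $[0,x^n_k]$, by coupling each $i$ with a uniform point of $(i-1,i]$ and using that $\rho_n$ is $1$-Lipschitz.

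It then remains to show that $\Phi_k$ is continuous at $x$, which holds almost surely because the $C_j$ are distinct and positive. We will use the correspondence $R=\{(\rho_n(u\,x_k^n),\rho(u\,x_k)):u\in[0,1]\}$ together with the coupling $\pi$ given by the image of Lebesgue measure on $[0,1]$ under $u\mapsto(\rho_n(ux_k^n),\rho(ux_k))$. Then $\pi$ has the correct marginals and $\pi(R^c)=0$, so by \eqref{Equation: GHP topology} it suffices to bound $\operatorname{dis}(R)$.

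We expect the main technical obstacle to be the distortion bound, namely showing $\sup_{u}\|\rho_n(ux^n_k)-\rho(ux_k)\|$-type quantities vanish in the sense that pairwise distances agree. The complication is that $\rho_n(ux^n_k)$ and $\rho(ux_k)$ may lie on different sticks when $u$ is near a cut point. We will handle this by induction on the stick index $j\le k$. Parametrize both trees by the common time scale; the ancestral line of a point on stick $j$ passes through at most $j$ attachment points. By induction, the position of attachment point $B_{j-1}^n$ in $\rho_n$ is within $\delta_{j-1}$ of that of $B_{j-1}$ in $\rho$, in the sense of distances to all other parametrized points. Moving the time by $|x_j^n-x_j|$ changes distances by at most that amount plus the length of misassigned stick pieces. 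This gives a bound $\delta_k\le C\sum_{j}\big(|x^n_j-x_j|\big)$, which tends to $0$. Combining this with the $O(kn^{-\beta/(\beta+1)})$ discretization errors yields the almost sure GHP convergence, and hence the stated convergence in distribution.
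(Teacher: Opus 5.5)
There is a genuine gap in the continuity step. Your correspondence $R=\{(\rho_n(u x^n_k),\rho(u x_k)):u\in[0,1]\}$ uses one global time rescaling, but $t\mapsto\rho(t)$ (and likewise $\rho_n$) is not continuous: at each cut point it jumps from the leaf $\rho(C_j)$ to $\rho(B_j)$, the base of stick $j+1$. Write $\tilde C^n_j=n^{-\beta/(\beta+1)}C^n_j$, $\tilde B^n_j=n^{-\beta/(\beta+1)}B^n_j$. Whenever $\tilde C^n_j/\tilde C^n_k\neq C_j/C_k$, which you cannot exclude, there is an interval of $u$ on which one paired point lies just before the tip of stick $j$ and the other lies just past the cut point, next to the attachment point. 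Take such a $u$ and compare it with a nearby $u'$ for which both points lie on stick $j$. The continuum distance is then about $0$, while the discrete one is about $d(\rho(B_j),\rho(C_j))>0$. So $\operatorname{dis}(R)$ stays bounded away from $0$ for every $n$.

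Your claim that shifting time changes distances ``by at most that amount plus the length of misassigned stick pieces'' is therefore false: the error is macroscopic, not of the size of the misassigned piece. The same confusion appears in ``$\rho_n$ is $1$-Lipschitz''. That holds only on each $(C^n_{j-1},C^n_j]$; your lattice coupling survives only because the cut points are integers, so each $(i-1,i]$ lies inside one stick. Your discretization step is otherwise fine; in fact $i\mapsto\rho_n(i)$ embeds $\T_n^{(k)}$ exactly isometrically.

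The fix is to reparametrize stick by stick, as the paper does with $\phi_n$. Map $(C_{j-1},C_j]$ affinely onto $(\tilde C^n_{j-1},\tilde C^n_j]$ and pair $\rho(t)$ with $\rho_n(\psi_n(t))$. The induction through attachment points then gives vanishing distortion.

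The price is that the pushforward of normalized Lebesgue measure on $[0,C_k]$ no longer has exactly the right first marginal. Its total variation distance to the correct one is at most $\sum_{j\le k}\bigl|(C_j-C_{j-1})/C_k-(\tilde C^n_j-\tilde C^n_{j-1})/\tilde C^n_k\bigr|\to 0$. This is precisely what the discrepancy term in \eqref{Equation: GHP topology} allows; the paper checks it by showing each vertex gets mass $1/C_k^n+o(n^{-\beta/(\beta+1)})$.

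Continuity of $\Phi_k$ at $x$ also needs $B_i\notin\{C_1,\dots,C_i\}$ for all $i$, not merely distinct positive $C_j$. If $B_i=C_j$, then $\tilde B^n_i$ can fall just past $\tilde C^n_j$, on stick $j+1$. Stick $i+1$ of the discrete tree would then hang near $\rho_n(\tilde B^n_j)$ rather than near the leaf $\rho(C_j)$. This condition holds almost surely, and the paper imposes it explicitly.

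With these two changes, your two-stage argument works. It is an isometric embedding into the continuum tree built from the discrete data, followed by continuity of $\Phi_k$. This is essentially the paper's direct correspondence split into two steps.
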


Firstly, we work in a probability space where $n^{-\frac{\beta}{\beta+1}}(C_i^n, B_i^n) \xrightarrow[n\to\infty]{}(C_i,B_i)$ holds almost surely and we also assume $B_i \neq C_j$ for all $i,j \in [k]$ which holds with probability $1$. Hence it suffices to show that for any sequence $n^{-\frac{\beta}{\beta+1}}(C_i^n, B_i^n) \to (C_i, B_i)$ with $B_i \neq C_j$ we have that the corresponding trees $\T_n^{(k)}$ and $\T^{(k)}$ and measures converge in distribution in the GHP-topology. Below, we define a correspondence $R_n$ between the trees $\T_n$ and $\T$ with vanishing distortion and find measure $\pi_n: \T_n^{(k)} \times \T^{(k)} \to [0,1]$ with vanishing discrepancy $D(\pi_n; \nu_n^{(k)}, \mu^{(k)})$ for which $\pi_n(R^c_n) \to 0$. This ensures convergences in the GHP topology as seen in Equation \eqref{Equation: GHP topology}.

\begin{definition}
\label{definition: correspondence for convergence tree}
   Define $\phi_n:[0, C_k] \to \T_n^{(k)}$ as $\phi_n(0) = 1$ and
    $$\phi_n(x) = \left\lceil C^n_{i-1} + \frac{x - C_{i-1}}{C_i - C_{i-1}}\big(C_i^n - C_{i-1}^n\big) \right\rceil \qquad \text{for $x \in (C_{i-1}, C_i]$},$$
    to be the projection of the first $k$ sticks of $\T$ (as subset of $\R$) onto the first $k$ sticks of $\T_n^{(k)}$. We define correspondence $R_n^{(k)} \subset \big(\T^{(k)} \times \T_n^{(k)}\big)$ as,
    $$R_n^{(k)} = \big\{\big(\rho(x), \phi_n(x)\big) \text{ for $x \in [0, C_k)$}\big\}.$$
\end{definition}
\begin{lem}
\label{relation (given almost sure event) has discrepency 0}
Suppose $n^{-\frac{\beta}{\beta+1}}(C^n_i, B^n_i) \to (C_i, B_i)$ as $n \to \infty$ with $C_i \neq B_j$ for $i,j \in [k]$. Then,
$$\lim_{n \to \infty}\operatorname{Dis}(R_n) = 0.$$
\end{lem}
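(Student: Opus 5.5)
The plan is to reduce the distortion to a comparison of finitely many ``branch point'' structures. Both $\T^{(k)}$ and $\T_n^{(k)}$ are trees built from $k$ sticks. In each, the distance between two points equals a sum of lengths of stick segments along the unique geodesic. In $\T^{(k)}$, take $x,y\in[0,C_k)$. The geodesic from $\rho(x)$ to the root is obtained by following the ancestral chain of sticks: from stick $i$ (containing $x$) we jump to $B_{i-1}$, which lies on some stick $i'<i$, and so on until we reach stick $1$. The distance $d(\rho(x),\rho(y))$ is then a finite piecewise-linear expression in $x,y,C_1,\dots,C_k,B_1,\dots,B_{k-1}$. This expression is determined by the ``combinatorial type'', i.e.\ which stick contains each $B_j$ and the relative order of attachment points on a common stick. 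The discrete distance $d_n(\phi_n(x),\phi_n(y))$ has exactly the same description, with $C_i^n,B_i^n$ in place of $C_i,B_i$ and $\phi_n(x)$ in place of $x$, up to $O(1)$ errors from ceilings (each stick contributes an error of at most $1$ and there are at most $k$ of them).

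The first step is to show that for $n$ large the combinatorial types agree. Since $B_j\neq C_i$ for all $i,j$, each $B_j$ lies in the interior of a unique interval $(C_{i-1},C_i)$. Because $n^{-\beta/(\beta+1)}(C^n_i,B^n_i)\to(C_i,B_i)$, for $n$ large $B_j^n$ lies in $(C^n_{i-1},C^n_i]$ for the same $i$. Ties $B_j=B_{j'}$ have probability zero and do not matter anyway, since the distances depend continuously on the positions. Hence the ancestral chains of sticks coincide. We must also check that $\phi_n(x)$ lies on stick $i$ of $\T_n$ whenever $x\in(C_{i-1},C_i]$; this holds by construction of $\phi_n$. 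Rescaled, $n^{-\beta/(\beta+1)}\phi_n(x)$ is an affine map on each stick that converges uniformly to the identity on $[0,C_k]$. This follows because its endpoints $n^{-\beta/(\beta+1)}C^n_i\to C_i$, with an $O(n^{-\beta/(\beta+1)})$ rounding error.

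The second step is the distance comparison. With the common combinatorial type fixed, $d(\rho(x),\rho(y))=F(x,y;C,B)$ and $n^{-\beta/(\beta+1)}d_n(\phi_n(x),\phi_n(y))=F(n^{-\beta/(\beta+1)}\phi_n(x),n^{-\beta/(\beta+1)}\phi_n(y);n^{-\beta/(\beta+1)}C^n,n^{-\beta/(\beta+1)}B^n)+O(kn^{-\beta/(\beta+1)})$. Here $F$ is a sum of absolute differences of its arguments, and hence is $1$-Lipschitz in each argument with respect to the sup norm, up to a factor of order $k$. The supremum over $x,y$ of the difference is therefore bounded by a constant (depending on $k$) times
\[
\sup_{x\in[0,C_k]}\bigl|n^{-\frac{\beta}{\beta+1}}\phi_n(x)-x\bigr|+\max_{i\le k}\bigl|n^{-\frac{\beta}{\beta+1}}C_i^n-C_i\bigr|+\max_{i\le k}\bigl|n^{-\frac{\beta}{\beta+1}}B_i^n-B_i\bigr|+O(n^{-\frac{\beta}{\beta+1}}),
\]
which tends to $0$.

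The main obstacle is making the ``same combinatorial type'' argument clean. A discrete attachment point $B^n_j$ is a vertex label, and its position along its stick, $B^n_j-C^n_{i-1}$, must be compared with the continuum position $B_j-C_{i-1}$. An off-by-one at stick endpoints could place $B_j^n=C_{i-1}^n$ on the previous stick. The hypothesis $B_j\neq C_i$ exactly excludes this for large $n$, and that is the step where it must be invoked carefully. To avoid writing $F$ explicitly, I would first argue by induction on $k$. Adding stick $k$ at $B_{k-1}$ changes distances only through the term $d(\rho(x),\rho(B_{k-1}))+(y-C_{k-1})$, so the distortion of $R^{(k)}_n$ is at most the distortion of $R^{(k-1)}_n$ plus the error in the attachment position and stick length, which vanishes.
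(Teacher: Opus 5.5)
Your proposal is correct, and in the form you settle on (induction on $k$) it is essentially the paper's proof. For $x$ on stick $k$ and $y\in[0,C_{k-1}]$, the paper splits the error into the new-stick error, the attachment mismatch $n^{-\frac{\beta}{\beta+1}}d_n(\phi_n(B_{k-1}),B^n_{k-1})$, and the inherited term $|\Delta_n(B_{k-1},y)|$, using $B_{k-1}\neq C_i$ exactly as you do to put $\phi_n(B_{k-1})$ and $B^n_{k-1}$ on the same stick; your global ``same combinatorial type plus Lipschitz formula'' framing is a valid unrolled version of that induction.
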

\begin{proof}
    To simplify notation, write $\Delta_n(x,y) = d(\rho(x), \rho(y)) - n^{-\frac{\beta}{\beta+1}}d_n(\phi_n(x), \phi_n(y))$ where $x,y \in \R_{\ge 0}$. We apply induction on $k$. For $k = 1$ the statement is true. Indeed suppose $x,y \in [0,C_1]$ with $x>y$. Then we have,
    \begin{align*}
        \Delta_n(x,y) &= x - y - n^{-\frac{\beta}{\beta+1}}\left(\left\lceil \frac{x}{C_1}C_1^n\right\rceil - \left\lceil \frac{y}{C_1}C_1^n\right\rceil\right),\\
        &= (x-y)\left(1 - n^{-\frac{\beta}{\beta+1}}\frac{C_1^n}{C_1}\right) + \delta n^{-\frac{\beta}{\beta+1}},
    \end{align*}
    for some $\delta \in (-1, 1)$. This expression vanishes as $n \to \infty$ since $n^{-\frac{\beta}{\beta+1}} C_1^n \to C_1$. Since this holds uniformly for all $x,y \in [0,C_1]$ we conclude $\text{Dis}(R_n^{(1)}) \to 0$ as $n \to \infty$.
    
    Next suppose $\text{Dis}(R_n^{(k-1)})$ vanishes as $n \to \infty$. We prove the analogous result for $R_n^{(k)}$. Suppose $x,y \in [0,C_k]$, we aim to show $\Delta_n(x,y) \to 0$ uniformly in $x,y$. If $x,y$ are both in $[0,C_{k-1}]$, the induction hypothesis immediately applies. If both $x,y \in (C_{k-1}, C_k]$, an analogous proof to the base case suffices. Thus, suppose $x \in (C_{k-1}, C_k]$ while $y \in [0,C_{k-1}]$. Then we have,
    \begin{align*}|\Delta_n(x,y)| &\le |d(\rho(x), \rho(B_{k-1})) - n^{-\frac{\beta}{\beta+1}}d_n(\phi_n(x), B_{k-1}^n)|\\
    &+ n^{-\frac{\beta}{\beta+1}}d_n(\phi_n(B_{k-1}), B_{k-1}^n)+ |\Delta_n(B_{k-1}, y)| \end{align*}
I.e. we decompose the error $|\Delta_n(x,y)|$ into the error of the new branch, the error between attachment points and the error between $\T^{(k-1)}$ and $\T_n^{(k-1)}$. The final term vanishes by the induction hypothesis. For the second term, recall that $B_{k-1} \neq C_i$ for $i \in [k]$ so that $\rho(B_{k-1})$ and vertex $B_{k-1}^n$ must lie on the same branch for $n$ large enough. Since $n^{-\frac{\beta}{\beta+1}}B_{k-1}^n \to B_{k-1}$ as $n \to \infty$, this term also vanishes. For the first term, we have,
$$d_n(\phi_n(x), B_{k-1}^n) = \left\lceil C_{k-1}^n + \frac{x - C_{k-1}}{C_k - C_{k-1}}(C_k^n - C_{k-1}^n)\right\rceil - C_{k-1}^n,$$
and thus,
\begin{align*}
    |d(\rho(x), \rho(B_{k-1})) &- n^{-\frac{\beta}{\beta+1}}d_n(\phi_n(x), B_{k-1}^n)|,\\
    &\le \left|x - C_{k-1} - n^{-\frac{\beta}{\beta+1}}\left(\frac{x-C_{k-1}}{C_k-C_{k-1}}(C_k^n - C_{k-1}^n) + \delta\right)\right| \xrightarrow[n \to \infty]{} 0
\end{align*}
where $\delta \in (-1,1)$. The above proof uniformly bounds $|\Delta_n(x,y)|$ for all $x, y \in [0,C_k]$, so the desired result follows. 
\end{proof}
We now define a measure $\pi_n: \T_n^{(k)} \times \T^{(k)} \to [0,1]$ for which $\pi_n\big(\big(R_n^{(k)}\big)^c\big) \to 0$ and $D(\pi_n ; \nu_n^{(k)}, \mu^{(k)}) \to 0$  as $n \to \infty$. 
\begin{definition}
    For measurable $A \subset \T_n^{(k)}, B \subset \T^{(k)}$, and $\phi_n$ as in Definition \ref{definition: correspondence for convergence tree}, set
$$\pi_n(A,B) = \mu^{(k)}\big(\big\{x \in B : \phi_n(\rho^{-1}(x)) \in A\big\}\big).$$
\end{definition}
\begin{lem}
    With $R_n$ as in Definition \ref{definition: correspondence for convergence tree}, we have for $k \in \N$,
    $$i) \; \pi_n\big(\big(R_n^{(k)}\big)^c\big) = 0 \quad \text{ and } \quad ii) \;D\big(\pi; \nu_n^{(k)}, \mu^{(k)}\big) \xrightarrow[n \to \infty]{} 0.$$
\end{lem}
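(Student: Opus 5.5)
The plan is to recognise $\pi_n$ as the law of a coupling. Let $U$ be uniform on $[0,C_k]$, so that $\rho(U)\sim\mu^{(k)}$. Then $\pi_n$ is the law of the pair $(\phi_n(U),\rho(U))$, viewed as a measure on $\T_n^{(k)}\times\T^{(k)}$ (the order of coordinates relative to $R_n^{(k)}$ is only a matter of convention). The map $\rho$ is injective on $[0,C_k]$ up to the endpoints of sticks, a Lebesgue-null set, so $\phi_n(\rho^{-1}(\cdot))$ is well defined $\mu^{(k)}$-a.e.

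For part i), the pair $(\rho(U),\phi_n(U))$ lies in $R_n^{(k)}$ by the very definition of the correspondence whenever $U\in[0,C_k)$. This holds almost surely, so $\pi_n((R_n^{(k)})^c)=0$.

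For part ii), the second marginal of $\pi_n$ is $\mu^{(k)}$ exactly, so that term of $D$ vanishes. It remains to compare the first marginal, the law of $\phi_n(U)$, with $\nu_n^{(k)}$, the uniform measure on the $C_k^n$ vertices of $\T_n^{(k)}$.

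Fix a stick $i\le k$. For $x\in(C_{i-1},C_i]$, $\phi_n(x)$ is the ceiling of an affine map sending $(C_{i-1},C_i]$ onto $(C_{i-1}^n,C_i^n]$. Each vertex $v\in\{C_{i-1}^n+1,\dots,C_i^n\}$ therefore has preimage an interval of length $(C_i-C_{i-1})/(C_i^n-C_{i-1}^n)$. Hence
\[
\pr(\phi_n(U)=v)=\frac{C_i-C_{i-1}}{C_k\,(C_i^n-C_{i-1}^n)},
\qquad
\nu_n^{(k)}(\{v\})=\frac{1}{C_k^n}.
\]
The vertex $1$ receives only the atom at $x=0$, which has probability zero; this contributes an error of at most $1/C_k^n$.

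Summing $|\pr(\phi_n(U)=v)-\nu_n^{(k)}(\{v\})|$ over $v$ bounds the total variation distance by
\[
\sum_{i=1}^k\left|\frac{C_i-C_{i-1}}{C_k}-\frac{C_i^n-C_{i-1}^n}{C_k^n}\right|+\frac{1}{C_k^n}.
\]
Since $n^{-\beta/(\beta+1)}C_i^n\to C_i$ and $C_k>0$, each term tends to $0$. The boundary term also vanishes, because $C_k^n\to\infty$. Thus $D(\pi_n;\nu_n^{(k)},\mu^{(k)})\to0$.

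The only real care needed is bookkeeping. One must make sure the ceiling map hits every vertex of stick $i$ with equal mass, including at the endpoints. One must also handle the null set where $\rho$ fails to be injective, namely the attachment points and the root. Neither affects the measures, so no step is a genuine obstacle.
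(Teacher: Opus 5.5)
Your proposal is correct and follows the paper's argument: $\pi_n$ is the law of $(\phi_n(U),\rho(U))$, its second marginal is exactly $\mu^{(k)}$, and each vertex on stick $i$ carries mass $\frac{C_i-C_{i-1}}{C_k(C_i^n-C_{i-1}^n)}=\frac{1}{C_k^n}+o(n^{-\beta/(\beta+1)})$, which you sum stick by stick to get a clean total-variation bound. One small slip: vertex $1$ lies on stick $1$ and receives the interval $(0,C_1/C_1^n]$ like every other vertex there (and $\rho$ is in fact injective on $[0,C_k]$, since each stick is the half-open interval $(C_{i-1},C_i]$), so your extra boundary term $1/C_k^n$ is unnecessary, though harmless.
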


\begin{proof}
$i)$ is immediate as $\pi_n(R_n^c) = \mu^{(k)}(\emptyset) = 0, \text{ for all } n.$
We continue with $ii)$. For this, let $B \subset \T^{(k)}$ and let $p_i$ denote projection onto $i$th coordinate for $i = 1,2$. Then, 
$$\pi_n(\T_n^{(k)}, B) = \mu^{(k)}\big(\big\{x \in B : \phi_n(\rho^{-1}(x)) \in \T_n^{(k)}\big\}\big) = \mu^{(k)}(B),$$
and thus $||\pi_n \circ p_2^{-1} - \mu^{(k)}||_{TV} = 0$ for all $n$. It remains to show that $||\pi_n \circ p_1^{-1} - \nu_n^{(k)}||_{TV} \to 0$ as~$n \to \infty$. For this, let $A \subset \T_n^{(k)}$. We compute,
\begin{align*}
\pi_n\big(A, \T^{(k)}\big) &= \mu^{(k)}\big(\big\{x \in \T^{(k)} : \phi_n(\rho^{-1}(x)) \in A\big\}\big),\\
&= \sum_{a \in A} \mu^{(k)}\big(\big\{x \in \T^{(k)} : \phi_n(\rho^{-1}(x)) = a\big\}\big).
\end{align*}
Let $s \in \{C_{i-1}^n + 1, \dots, C_i^n\}$ be a vertex of $\T_n^{(k)}$ on the $i$th branch and recall that $\mu^{(k)} = \rho_* \bar \lambda$ is the pushforward by $\rho$ of the normalized Lebesgue measure on $[0,C_k]$. Hence,
\begin{align*}
    \mu^{(k)}\big(\big\{x \in \T^{(k)} : \phi_n(\rho^{-1}(x)) = s\big\}\big) &= \bar\lambda\big(\big\{x \in [0,C_k] : \phi_n(x) = s\big\}\big),\\
    &=\bar \lambda\Big(\Big\{x : C_{i-1}^n + \frac{x - C_{i-1}}{C_i - C_{i-1}}(C_i^n - C_{i-1}^n) \in (s-1, s]\Big\}\Big),\\
    &= \frac{1}{C_k}\frac{C_i - C_{i-1}}{C_i^n - C_{i-1}^n},\\
    &= \frac{1}{C_k^n} + o\big(n^{-\frac{\beta}{\beta+1}}\big).
\end{align*}
We conclude,
$$\pi_n(A, \T^{(k)}) = \sum_{s \in A} \mu^{(k)}\big(\big\{x \in \T^{(k)} : \phi_n(x) = s\big\}\big) = \frac{\#A}{C_k^n} + \#A \times o\big(n^{-\frac{\beta}{\beta+1}}\big) = \nu_n^{(k)}(A) + o(1),$$
since $\frac{\#A}{C_k^n} = \nu_n^{(k)}(A)$ and $\#A \le C_k^n = O\big(n^\frac{\beta}{\beta+1}\big)$. We conclude $D\big(\pi_n ; \nu^{(k)}_n, \mu^{(k)}\big) \xrightarrow[n \to \infty]{} 0$ which verifies $ii)$ and finishes the proof.
\end{proof}

\subsection{Hausdorff approximation of $\T$ by its first branches -- polynomial case}
\label{Section: compactness T polynomial case}
In this section, we aim to show,
\begin{thm}
\label{Theorem: limit continuous tree is 0}
    For all $\epsilon > 0$, we have,
    $$\lim_{t \to \infty} \pr\big(d_H\big(\T(t), \T\big)>\epsilon\big) = 0.$$
\end{thm}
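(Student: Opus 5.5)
We need $d_H(\T(t),\T)\to 0$ in probability as $t\to\infty$ for $f(t)=t^\beta$. Since $\T(t)\subset\T$ and $\T$ is the closure of $\bigcup_s \T(s)$, it suffices to control
\[
\sup_{s\ge t} d\bigl(\rho(s),\T(t)\bigr).
\]
The plan is to replace this supremum by a supremum over geometric time blocks and then sum bounds over the blocks. Fix $\delta>0$ small and consider the times $t_j=2^j t$. For a point $\rho(s)$ with $s\in(t_j,t_{j+1}]$, follow its ancestral line backwards. It moves along its own stick to the attachment point $B_{i-1}$, which is uniform on $[0,C_{i-1}]$. From there it continues along earlier sticks, and so on. The idea, following Aldous and Curien--Haas, is to bound the distance to $\T(t_j)$ by the total length of the sticks lying in $(t_j,t_{j+1}]$ that are visited. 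Iterating over the blocks then bounds $d(\rho(s),\T(t))$ by a sum over $j$ of such contributions.

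\textbf{Step 1 (stick lengths).} The intensity on $[t_j,t_{j+1}]$ is about $t_j^\beta$. Hence the number of sticks there is $N_j\approx t_j^{\beta+1}$, with Poisson concentration. By a union bound over exponential gaps, the maximal gap length in $[t_j,t_{j+1}]$ is at most $L_j:=K\,t_j^{-\beta}\ln t_j$ with probability at least $1-t_j^{-2}$, say. This failure probability is summable over $j$.

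\textbf{Step 2 (number of sticks visited).} Take a point on a stick starting at time $s\in(t_j,t_{j+1}]$. Its attachment point is uniform on $[0,s]$, so it lands in $[0,t_j]$ with probability at least $1/2$. The number of consecutive sticks visited inside the block before leaving it is therefore stochastically dominated by a Geometric$(1/2)$ variable. Consequently, with probability at least $1-2^{-m}$, a single fixed ancestral line visits at most $m$ sticks in the block.

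We need this simultaneously for all points in the block, so we union bound over the $N_j\approx t_j^{\beta+1}$ sticks. Choosing $m=c\ln t_j$ with $c$ large makes each failure probability $\lesssim t_j^{\beta+1}2^{-c\ln t_j}$, which is summable in $j$. On the good event, every point in block $j$ is within distance $m L_j\lesssim t_j^{-\beta}\ln^2 t_j$ of $\T(t_j)$. Summing over $j\ge0$ gives
\[
\sum_j t_j^{-\beta}\ln^2 t_j\;\lesssim\; t^{-\beta}\ln^2 t\;\to\;0.
\]
The total failure probability $\sum_j O(t_j^{-1})$ also tends to $0$ as $t\to\infty$, which yields the theorem.

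\textbf{Main obstacle.} The delicate point is Step 2's use of independence. The number of sticks visited along the ancestral path depends on the attachment choices $B_i$, and these are independent given $\eta$. However, the union bound is over the random sticks themselves. I will handle this by conditioning on $\eta$ and working on the event from Step 1 where $N_j$ and the gaps are controlled. Each ancestral chain of length $m$ within the block is determined by $m$ independent uniforms, each of which stays in the block with probability $\le 1-t_j/s\le 1/2$. A chain of $m$ steps staying inside the block thus has probability at most $2^{-m}$ given $\eta$, and the union bound over starting sticks completes the estimate. A cruder alternative would be to invoke the Curien--Haas compactness criterion ($a_i<i^{-\epsilon+o(1)}$, valid here since $c_i\approx i^{-\beta/(\beta+1)}$ up to logarithmic factors). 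That criterion gives compactness, from which $d_H(\T(t),\T)\to0$ almost surely follows by total boundedness.
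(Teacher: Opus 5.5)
Your argument is correct. It shares the paper's skeleton: dyadic blocks $t_j=2^jt$, exploring the ancestral line stick by stick, each attachment leaving the block with probability at least $1/2$, gaps controlled by the intensity $t_j^\beta$, and summing the Hausdorff increments over blocks. Where you differ is in how you get uniformity over a block.

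The paper first proves a single-point tail bound $\pr(d(\T(a),\rho(s))>c)\le 30e^{-ca^\beta/4}$. It does this with one Chernoff estimate that handles the number of sticks and their lengths jointly. It then passes to the Hausdorff distance by an averaging trick: if some point of $\T(2a)$ is at distance $>c$ from $\T(a)$, then a parameter set of Lebesgue measure at least $c/2$ is at distance $>c/2$. Markov's inequality then gives $\pr(d_H(\T(a),\T(2a))>c)\le \frac{60a}{c}e^{-ca^\beta/8}$.

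You instead separate the two sources of randomness. First, a uniform void-interval bound for $\eta$ on the block. Second, conditionally on $\eta$, a union bound over the finitely many sticks starting in the block. This union bound is legitimate because all points of a stick share the ancestral chain beyond it, and the attachment variables met along a chain are distinct and conditionally independent.

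Your route avoids the measure-theoretic step and gives an explicit rate $O(t^{-\beta}\ln^2 t)$. However, bounding the distance by (number of sticks visited) $\times$ (maximal gap) costs order $a^{-\beta}\ln^2 a$ per block. The paper's estimate is already small for $c$ of order $a^{-\beta}\ln a$. The extra logarithm is harmless for polynomial intensity. In the $\ln^\gamma(t+1)$ setting, though, it would only yield compactness for roughly $\gamma>2$ rather than the sharp $\gamma>1$, which is why the paper's per-point lemma is the more robust tool.

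Two minor remarks. Your per-block bound should read $(m+1)L_j$, to include the stick straddling $t_j$. Your fallback via the Curien--Haas criterion is also valid after conditioning on $\eta$, as the paper itself notes in its related-work discussion.
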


From a union bound, it follows that it suffices to find $\epsilon_i : [0,\infty) \to [0, \infty)$ for which,
\begin{equation}\label{conditions on epsilon_i}i) \lim_{t \to \infty} \sum_{i=0}^\infty \epsilon_i(t) = 0 \quad \text{ and } \quad ii) \lim_{t \to \infty} \sum_{i=0}^\infty \pr\big(d_H\big(\T(2^it), \T(2^{i+1}t)\big) > \epsilon_i(t)\big) = 0.\end{equation}
We first work on the second statement, for which we introduce the following lemma.

\begin{lem}
\label{Lemma: distance point to T(a)}
    Fix $a,c>0$ and $s \in [a,2a]$. Then,
    $$\pr(d(\T(a), \rho(s))>c)\le 30\exp\left(-\frac{c a^\beta}{4}\right).$$
\end{lem}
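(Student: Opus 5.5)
The plan is to bound $d(\T(a),\rho(s))$ by the length of the path that leads from $\rho(s)$ back towards the root, and to explore this path by scanning time \emph{leftwards} from $s$. Write $x_0 = s$. If $x_0 \le a$ we are done. Otherwise, let $C^{(1)}$ be the largest point of $\eta$ strictly below $x_0$. By the definition of $\rho$, the segment $\rho([C^{(1)}, x_0])$ is a straight piece of a single stick of length $x_0 - C^{(1)}$. Moreover, $\rho(C^{(1)}+)$ is glued at $\rho(B)$, where $B \sim \mathrm{Unif}([0,C^{(1)}])$ is the corresponding attachment point. So if $C^{(1)} \le a$, then $\rho(s)$ is within distance $x_0 - a$ of $\T(a)$. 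If $C^{(1)} > a$, we move to $x_1 = B$ at cost $x_0 - C^{(1)}$ and repeat. Since the times $x_0 > x_1 > \dots$ strictly decrease, the procedure stops at the first step $j$ at which either the next cut point or the jump target lies in $[0,a]$. We then get
\[
d(\T(a),\rho(s)) \le \sum_{j\ge 1} L_j ,
\]
where $L_j$ is the (truncated at $a$) distance from $x_{j-1}$ down to the previous cut point, summed over the steps before stopping.

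The key structural point is a leftward Markov property. At every stage, everything revealed so far concerns $\eta \cap [x_j, 2a]$ together with already used attachment points. The jump target $x_j = B$ lies strictly left of the last revealed cut point. Hence, conditionally on the past, $\eta \cap [0, x_j)$ is still a fresh PPP of intensity $t^\beta\,dt$. On $[a, 2a]$ this intensity is at least $a^\beta$. Consequently each $L_j$ is stochastically dominated by an $\mathrm{Exp}(a^\beta)$ variable independent of the past, since a PPP of larger intensity only shortens the gap to the nearest point on the left.

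Next, whenever a cut point $C \in (a, 2a]$ is reached, the jump target is uniform on $[0,C]$ and independent of everything else given $C$. It therefore lands in $[0,a]$, ending the procedure, with probability $a/C \ge 1/2$. So the number of exponential segments $K$ is dominated by a $\mathrm{Geom}(1/2)$ variable independent of the segment lengths. A geometric sum of i.i.d.\ $\mathrm{Exp}(\lambda)$ variables with success probability $p$ is exactly $\mathrm{Exp}(p\lambda)$. This gives
\[
\pr\big(d(\T(a),\rho(s))>c\big) \le e^{-c a^\beta/2},
\]
which is stronger than the claimed $30\exp(-c a^\beta/4)$. If making the stochastic domination fully rigorous is awkward, a cruder but safe route also yields the stated constants. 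Split on $\{K > m\}$, of probability at most $2^{-m}$, and use a Chernoff bound for the sum of $m$ dominated exponentials, with $m$ chosen proportional to $c a^\beta$. The slack in the constants $30$ and $1/4$ suggests this is what was intended.

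The main obstacle is justifying the conditional independence rigorously. One must check that the leftward exploration is a stopping-type procedure for the filtration generated by $\eta$ restricted to $[x,\infty)$ together with the used attachment points. Two facts need verifying at each stage. First, the unexplored region $[0,x_j)$ carries an independent PPP. Second, each attachment point is uniform given the cut point it belongs to. Both hold because $B$ is drawn conditionally on $\eta$ and independently across sticks, and because $B < C$ forces the exploration to move only into unrevealed territory. I would formalize this by realizing the PPP as the image of a unit-rate Poisson process under $\Lambda(t) = t^{\beta+1}/(\beta+1)$, and applying the strong Markov property of the reversed process on $[0, 2a]$.
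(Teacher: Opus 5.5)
Your proof is correct. It shares the paper's skeleton: the ancestral line of $\rho(s)$ is explored stick by stick, moving left in time. Each segment length is dominated by an $\mathrm{Exp}(a^\beta)$ variable, because the unexplored part of $\eta$ below the current position is a fresh Poisson process with intensity at least $a^\beta$ on $(a,2a]$.

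The difference is in how you bound the number of traversed sticks and how you combine the two estimates.
\begin{itemize}
\item The paper bounds the number of sticks via $\{N>j\}\subseteq\{2U_1\cdots U_j>1\}$ and a Chernoff bound. It bounds the sum of $j$ dominated exponentials by a second Chernoff bound, and joins the two with a union bound at $j\approx ca^\beta$. This produces the constants $30$ and $1/4$. It only needs the two marginal dominations, so the dependence between $N$ and the $d_i$ never has to be addressed.
\item You use that every explored cut point lies in $(a,2a]$, so each jump lands in $[0,a]$ with probability $a/C\ge 1/2$. The thinning identity for a geometric sum of exponentials then gives the sharper bound $e^{-ca^\beta/2}$, with no parameters to optimise.
\end{itemize}

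The price of your route is that you need a joint domination, since $K$ and the $L_j$ are dependent. For example, stopping because $(a,x_{j-1})$ contains no cut point is read off from $L_j$. This is easily arranged:
\begin{itemize}
\item At step $j$, generate $L_j$ by the quantile transform of a uniform $V_j$; its conditional law depends on the past only through $x_{j-1}$.
\item Set $E_j=-a^{-\beta}\ln(1-V_j)$, so that $L_j\le E_j$.
\item Realise the attachment point as $C^{(j)}W_j$, with $W_j$ a uniform independent of everything else.
\end{itemize}
Since $C^{(j)}\le 2a$, the event $\{W_j\le 1/2\}$ forces stopping. Hence $K\le K':=\min\{j:W_j\le 1/2\}$, where $K'\sim\mathrm{Geom}(1/2)$ is independent of $(E_j)_{j\ge 1}$. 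Therefore $\sum_{j\le K}L_j\le\sum_{j\le K'}E_j\sim\mathrm{Exp}(a^\beta/2)$.

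Your fallback route is essentially the paper's proof, with $\pr(K>m)\le 2^{-m}$ replacing the product-of-uniforms estimate.
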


\begin{proof}
We examine the ancestral line of $\rho(s)$ in $\T$ until its first intersection point with $\T(a)$.  Let $\eta$ be a PPP of intensity $t^\beta dt$ on $\R_{\ge 0}$ and let $U_1, U_2, \dots$ be i.i.d.\ $\text{Unif}([0,1])$ random variables independent of $\eta$. Let $t_1 = s$ and recursively define, 
    $$p(t_i) = \max\{\{x \in \eta : x \le t_i\} \cup \{a\}\}, \qquad d_i = t_i - p(t_i), \qquad t_{i+1} = \max\{a,U_ip(t_i)\},$$
Thus $p(t_i)$ corresponds to the left endpoint of the stick on which $\rho(t_i)$ lies or $p(t_i) = a$ if that left endpoint lies before $a$. Either way,  $d_i$ is the distance from $\rho(t_i)$ to the attachment point of that
stick or $\T(a)$. Lastly, we let $N=\min\{i\ge1:U_i p(t_i)\le a\}$ be the number of sticks traversed on the path from $\rho(s)$ to $\T(a)$. Observe that $t_{N+1} = a$ and hence $d_{j} = 0$ for $j \ge N+1.$

By construction, $d(\T(a), \rho(s)) = \sum_{j=1}^N d_j$. We first bound the tail of $N$. On the event $\{N > j\}$ we have $t_{i+1} = U_ip(t_i)$ for $i \le j$ and hence
$$a < t_{j+1} \le U_jp(t_j) \le U_jt_j \le U_jU_{j-1}t_{j-2}\le \cdots \le U_j \cdots U_1 s \le 2U_j \cdots U_1a.$$
This immediately implies
$$\pr(N > j) \le \pr(2aU_1 \cdots U_j > a) \le 2^x\left(\frac{1}{1+x}\right)^j$$
where the last step follows from a Chernoff bound and holds for all $x>0$. Next, we bound $\sum_{i=1}^j d_i$. First note that $\{d_i > u\}$ can only happen if $\eta([t_i - u, t_i]) = 0$. Formally, we see
\begin{align*}
    \pr(d_i > u \mid d_1, \dots, d_{i-1}, t_i) &= \mathbbm{1}_{\{t_i - u > a\}}\exp\left(-\int_{t_i - u}^{t_i} t^\beta dt\right) \le \exp(-u a^\beta).
\end{align*}
Thus the $d_i$s can be dominated by i.i.d.\ $\text{Exp}(\lambda)$ random variables with $\lambda=a^\beta$. This does not depend on $d_1,\dots, d_{i-1},t_i$ so we obtain
$$\pr\!\left(\sum_{i=1}^j d_i > c\right) \le e^{-c\theta} \left(\frac{\lambda}{\lambda-\theta}\right)^j$$
where the last bound again follows from a Chernoff bound and holds for $\theta \in (0, \lambda)$. To finish the argument, we apply a union bound to obtain that
$$\pr(d(\T(a), \rho(s))>c) \le \pr\!\left(\sum_{i=1}^j d_i > c\right) + \pr\!\left(N>j\right) \le e^{-c\theta}\left(\frac{\lambda}{\lambda - \theta}\right)^j + 2^x\left(\frac{1}{1+x}\right)^j,$$
where the above inequality holds for all $x>0, \theta \in (0,\lambda)$ and $j \in \Z_{\ge 0}$. By taking the values, $x = e - 1, \theta = \lambda\frac{e-1}{e}$ and $j = \floor{c\lambda\frac{e-1}{2e}}$, we obtain
$$\pr(d(\T(a), \rho(s))>c) \le \left( 1+e2^{e-1}\right) e^{-\lfloor \lambda c\frac{e-1}{2e}\rfloor}\le 30e^{-\frac{\lambda c}{4}},$$
as desired.
\end{proof}

Another way to see the above result is to sample the points $B_i$ and $C_i$ jointly: start with a homogeneous PPP in the wedge $\{(x,y): x\ge0,0\le y\le x^\beta\}$. The $x$-coordinates of the points are distributed as $C_i$ and the (rescaled) $y$-coordinates as $B_i$. A path from $s$ to $\T(a)$ of length exceeding $c$ requires an area $c a^\beta$ to be void of points, giving the exponential decay.  

The above bound on the distance between $\rho(s) \in \T(2a)$ and $\T(a)$ can be upgraded to a bound on $d_H(\T(a), \T(2a))$.
\begin{lem}
\label{Lemma: bound doubling tree}
    Let $a,c>0$. We have,
    $$\pr\big(d_H\big(\T(a), \T(2a)\big)>c\big) \le \frac{60a}{c} \exp\left(-\frac{c a^\beta}{8}\right)$$
\end{lem}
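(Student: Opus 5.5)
Since $\T(a)\subset\T(2a)$, the Hausdorff distance is $\sup_{s\in[a,2a]} d(\T(a),\rho(s))$. The plan is to discretise $[a,2a]$ into a grid of $m$ points, apply Lemma~\ref{Lemma: distance point to T(a)} with distance $c/2$ at each grid point, and take a union bound. This reduces the problem to controlling points between grid points.

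Concretely, let $s_j = a + j a/m$ for $j=0,\dots,m$ with $m=\lceil 2a/c\rceil$, so consecutive grid points are at most $c/2$ apart in the parameter. Lemma~\ref{Lemma: distance point to T(a)} with $c/2$ gives
\[
\pr\big(\exists j: d(\T(a),\rho(s_j))>c/2\big)\le (m+1)\cdot 30\exp\!\Big(-\frac{ca^\beta}{8}\Big).
\]
Since $m+1\le 2a/c+2$, this is of order $\frac{60a}{c}e^{-ca^\beta/8}$. The regime $c\gtrsim a$ needs a separate, cruder argument, but there the bound $60a/c$ is at most a constant and the claim is nearly trivial or easily absorbed.

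The step needing care is passing from grid points to all $s\in[a,2a]$. Take $s\in(s_j,s_{j+1}]$. If no cut point $C_i$ lies in $(s_j,s]$, then $\rho(s)$ and $\rho(s_j)$ are on the same stick at distance $s-s_j\le c/2$, so
\[
d(\T(a),\rho(s))\le c/2 + d(\T(a),\rho(s_j)).
\]
If a cut point lies in between, this triangle argument breaks, because $\rho(s)$ is on a new stick. The fix is to compare with the left endpoint of that stick instead. Moving from $\rho(s)$ down its own stick toward the attachment point covers distance at most $s-C_i\le c/2$. The attachment point $\rho(B_{i-1})$ is not a grid point, however.

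To handle this, I would rerun the argument of Lemma~\ref{Lemma: distance point to T(a)} uniformly: define the ancestral walk started from the left end $C_i^+$ of each stick intersecting $[a,2a]$, rather than from grid points. Equivalently, take the union bound over both the grid points and the cut points in $[a,2a]$. The expected number of cut points there is $\int_a^{2a}t^\beta\,dt\approx a^{\beta+1}$, and by Mecke/Campbell each such point satisfies the single-point bound of Lemma~\ref{Lemma: distance point to T(a)}. This adds a term of order $a^{\beta+1}e^{-ca^\beta/4}$.

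A cleaner alternative avoids the cut-point union bound. Choose grid spacing $c/2$ and observe that for any $s$, the stick containing $\rho(s)$ either starts before $s_j$ or at a cut point $C_i\in(s_j,s]$. I would bound
\[
d(\T(a),\rho(s))\le (s-s_j) + \max\big(d(\T(a),\rho(s_j)),\, d(\T(a),\rho(C_i))\big).
\]
Then the $C_i$ term is controlled by comparing $\rho(C_i)$ to $\rho(B_{i-1})$ with $B_{i-1}$ uniform, using the single-point bound at the random point $B_{i-1}$, which is valid by independence. I expect this bookkeeping to be the main obstacle: getting exactly the prefactor $60a/c$ and exponent $ca^\beta/8$ requires charging the extra terms against the slack between $ca^\beta/4$ and $ca^\beta/8$, using $a^{\beta+1}e^{-ca^\beta/8}\lesssim (a/c)\cdot$const.
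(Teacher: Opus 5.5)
Your proposal has a genuine gap at the step you yourself flag as the main obstacle. A union bound over grid points and cut points cannot produce the stated inequality.

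\emph{No slack to charge against.} With grid spacing $c/2$, the triangle inequality forces you to apply Lemma~\ref{Lemma: distance point to T(a)} at level $c/2$, both at grid points and at attachment points. So every term carries $e^{-ca^\beta/8}$. The grid term alone, $30\,m\,e^{-ca^\beta/8}$ with $m=\lceil 2a/c\rceil\ge 2a/c$ (the point $s_0=a$ is free), already uses the whole budget $\frac{60a}{c}e^{-ca^\beta/8}$. Your claim that the cut-point term is of order $a^{\beta+1}e^{-ca^\beta/4}$ is unjustified. The attachment point only has to be at distance $>c-(s-C_i)\ge c/2$ from $\T(a)$, not $>c$.

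\emph{The cut-point term is large.} A grid cell can contain arbitrarily many cut points, so you need a union bound over all attachment points $\rho(B_i)$ with $C_i\in[a,2a]$. Via Mecke this contributes $30e^{-ca^\beta/8}$ times the expected number of such sticks, which is of order $\frac{2^\beta}{\beta+1}a^{\beta+1}$. Relative to the target, that is a factor of order $\frac{2^\beta}{\beta+1}\,ca^\beta$. Since $d_H(\T(a),\T(2a))\le a$ deterministically, the lemma has content only when $c<a$ and $ca^\beta>8\ln 60$, and there this factor is large. Taking spacing $\delta c$ and level $(1-\delta)c$ gains a factor $e^{-(1/8-\delta/4)ca^\beta}$, which tames the linear factor $ca^\beta$. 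The $2^\beta$ growth still rules out a $\beta$-free constant such as $60$ by this route.

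\emph{The ``cleaner alternative'' is wrong as written.} The stick $(C_i,C_{i+1}]$ hangs from $\rho(B_i)$, whereas $\rho(C_i)$ is the tip of the previous stick. So $d(\rho(s),\rho(C_i))\neq s-C_i$ in general. Comparing $\rho(C_i)$ with $\rho(B_{i-1})$ costs the full stick length $C_i-C_{i-1}$. And this variant still needs the union bound over cut points.

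The missing idea is an occupation-measure argument that never counts sticks or grid points. By Fubini and Lemma~\ref{Lemma: distance point to T(a)} at level $c/2$,
$\E\big[\lambda\{x\in[a,2a]:d(\T(a),\rho(x))>c/2\}\big]\le 30a\,e^{-ca^\beta/8}$.
Suppose some $\rho(s)$ is at distance $>c$ from $\T(a)$. Then the initial segment of length $c/2$ of its geodesic to $\T(a)$ consists of points at distance $>c/2$. Since $\rho$ is isometric on each stick and distinct sticks occupy disjoint parameter intervals in $(a,2a]$, this segment is the image of a parameter set of Lebesgue measure at least $c/2$. Markov's inequality then gives exactly $\frac{2}{c}\cdot 30a\,e^{-ca^\beta/8}$.

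If you only want a bound strong enough for Theorem~\ref{Theorem: limit continuous tree is 0}, your approach simplifies. The function $d(\T(a),\cdot)$ on $\T(2a)$ is maximized at a stick tip $\rho(C_i)$ with $C_i\in(a,2a]$ or at $\rho(2a)$. Mecke plus the single-point bound at level $c$ then gives $30\big(\frac{2^{\beta+1}-1}{\beta+1}a^{\beta+1}+1\big)e^{-ca^\beta/4}$, with no grid at all. That is not the stated inequality, however.
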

\begin{proof}
    By Lemma \ref{Lemma: distance point to T(a)}, we get,
    $$\mathbb{E}\!\left[\lambda\!\left( \left\{x \in [a,2a] : d\big(\T(a), \rho(x)\big)>\frac{c}{2}\right\}\right)\right]\le 30ae^{-\frac{ca^\beta}{8}}.$$
    Whenever $d_H(\T(a), \T(2a)) > c$, we must have $\lambda\left(\left\{x \in [a,2a] : d\big(\T(a), \rho(x)\big)>\frac{c}{2}\right\}\right) > \frac{c}{2}$. Indeed, if there is a point in $\T(2a)$ that is at distance $c$ away from $\T(a)$, then the portion of its ancestral line lying at distance at least $c/2$ from $\T(a)$ must have length at least $c/2$. Using this, we obtain the bound,
    $$\mathbb{E}\!\left[\lambda\!\left(\left\{x \in [a,2a] : d\big(\T(a), \rho(x)\big)>\frac{c}{2}\right\}\right)\right] \ge \frac{c}{2} \pr(d_H(\T(a), \T(2a)) > c).$$
    Combining both inequalities yields $\pr\big(d_H(\T(a), \T(2a))>c\big) \le \frac{60a}{c}e^{-\frac{ca^\beta}{8}}$.
\end{proof}

\begin{proof}[Proof of Theorem \ref{Theorem: limit continuous tree is 0}]
\label{proof continuous tree distance to 0}
Recall that it suffices to find $\epsilon_i: [0, \infty) \to [0, \infty)$ for which $i)$ and $ii)$ in Equation \eqref{conditions on epsilon_i} are satisfied. Substitute $a = 2^it$ and $c = \epsilon_i(t)$ in Lemma \ref{Lemma: bound doubling tree}. This yields,
$$\sum_{i=0}^\infty \pr\big(d_H\big(\T(2^it), \T(2^{i+1}t)\big)>\epsilon_i\big) \le 60 \sum_{i=0}^\infty\frac{2^{i}t}{\epsilon_i} \exp\left(-\frac{(2^{ i}t)^\beta\epsilon_i}{8}\right).$$
Take $\epsilon_i = \frac{8(i+1)(\beta+1)}{(2^it)^\beta}t^{\frac{\beta}{2}}$. Clearly, $i)$ of Equation \eqref{conditions on epsilon_i} is satisfied. We continue to verify $ii)$. Suppose $t$ is large enough that $t^\frac{\beta}{2} > \ln(2)$. Then,
\begin{align*}
    \sum_{i=0}^\infty \pr\big(d_H\big(\T(2^it), \T(2^{i+1}t)\big) > \epsilon_i\big) &\le 8\sum_{i=0}^\infty\frac{2^{i(\beta+1)}}{(i+1)(\beta+1)}t^{1 + \frac{\beta}{2}} \exp\left(-t^\frac{\beta}{2}(i+1)(\beta+1)\right)\\
    &\le 8t^{1+\frac{\beta}{2}} \sum_{i=0}^\infty \exp\left((1+\beta)(i+1)(\ln(2) - t^\frac{\beta}{2})\right)\\
    &\le 8t^{1+\frac{\beta}{2}} \sum_{i=0}^\infty \big(2\exp\!\big(\!-t^\frac{\beta}{2}\big)\big)^{i+1}\\
    &=8t^{1 + \frac{\beta}{2}}\frac{2\exp\!\big(\! - t^\frac{\beta}{2})}{1 - 2
    \exp\!\big(\!- t^\frac{\beta}{2}\!\big)\!}\xrightarrow[t \to \infty]{} 0,
\end{align*}
where in the last step, we recognized a geometric series. This concludes the proof. 
\end{proof}

\subsection{Hausdorff approximation of $\T_n$ by its first branches -- polynomial case}
\label{Section: compactness T_n polynomial case}
In this section, we aim to show,
\begin{thm}
\label{Theorem: compactness discrete tree uniform case}
For all $\epsilon > 0$, we have,
$$\lim_{t \to \infty} \limsup_{n \to \infty} \pr\!\left(d_H\left(\T_n(tn^\frac{\beta}{\beta+1}), \T_n\right)>\epsilon n^\frac{\beta}{\beta+1}\right) = 0.$$
\end{thm}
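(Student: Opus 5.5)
We mirror the continuum argument of Section~\ref{Section: compactness T polynomial case} in the discrete setting. Write $m = n^{\frac{\beta}{\beta+1}}$. We use dyadic blocks $a_i = 2^i t m$ for $i = 0,1,\dots,I$, where $I$ is the first index with $a_{I} \ge n$. By the union bound and the triangle inequality for $d_H$ (the trees $\T_n(a_i)$ are nested), it suffices to find $\epsilon_i(t)$ with
\[
\sum_i \epsilon_i(t) \le \epsilon
\qquad\text{and}\qquad
\limsup_{n\to\infty}\sum_{i=0}^{I} \pr\big(d_H(\T_n(a_i),\T_n(a_{i+1})) > \epsilon_i(t)\, m\big) \xrightarrow[t\to\infty]{} 0 .
\]
We choose $\epsilon_i(t)$ of the same form as in the proof of Theorem~\ref{Theorem: limit continuous tree is 0}, i.e. $\epsilon_i = C(i+1)(2^it)^{-\beta}t^{\beta/2}$.

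\textbf{Step 1: discrete analogue of Lemma~\ref{Lemma: distance point to T(a)}.} Fix $a \ge tm$ and a vertex $s\in(a,2a]$, and trace the ancestral line of $s$ back to $\T_n(a)$ exactly as before. Let $p(t_j)$ be the largest cut time below $t_j$ (or $a$ if there is none). The next point is $t_{j+1}=\max\{a,B\}$ with $B$ uniform on $[p(t_j)]$, and $d_j = t_j - p(t_j)$.
\begin{itemize}
\item \emph{Number of sticks $N$.} Since $\lfloor U p\rfloor$-type uniforms are stochastically dominated by $U\cdot p$, the bound $\pr(N>j)\le 2^x(1+x)^{-j}$ carries over verbatim.
\item \emph{Stick lengths $d_j$.} Conditionally on the past, $d_j>u$ requires $R_i=0$ for all $i\in(t_j-u,t_j]$ with $i>a$. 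Because the $R_i$ are independent, this probability is at most
\[
\prod_{i}\big(1-(i/n)^\beta\big)\le \exp\big(-u(a/n)^\beta\big).
\]
So the $d_j$ are dominated by geometric (hence exponential-type) variables of rate $\lambda=(a/n)^\beta$.
\end{itemize}
One subtlety is that revealing $p(t_j)$ exposes some of the $R_i$. We handle this as in the continuum case: the exploration moves strictly leftward, and the $R_i$ on unexplored intervals are still independent. The same Chernoff computation then gives
\[
\pr\big(d(\T_n(a),s)>c\big)\le 30\exp\!\big(-c\,(a/n)^\beta/4\big)
\]
for $c$ at least a constant (to absorb lattice effects). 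In rescaled units $a=\alpha m$ and $c=\kappa m$ we have $c(a/n)^\beta = \kappa\alpha^\beta$, because $m^{1+\beta}n^{-\beta}=1$. Hence the bound is $30e^{-\kappa\alpha^\beta/4}$, identical to the continuum one.

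\textbf{Step 2: discrete analogue of Lemma~\ref{Lemma: bound doubling tree}.} We use the same averaging argument with counting measure on $\{a+1,\dots,2a\}$ in place of Lebesgue measure. If some vertex is at distance greater than $c$ from $\T_n(a)$, then at least $c/2$ vertices on its ancestral line are at distance greater than $c/2$. This gives
\[
\pr\big(d_H(\T_n(a),\T_n(2a))>c\big)\le \frac{60a}{c}\,e^{-c(a/n)^\beta/8},
\]
which is $\frac{60\alpha}{\kappa}e^{-\kappa\alpha^\beta/8}$ in rescaled units. Note the bound is uniform in $n$. Summing over $i$ with $\alpha=2^it$ and $\kappa=\epsilon_i$ is then exactly the series already bounded in the proof of Theorem~\ref{Theorem: limit continuous tree is 0}, and it tends to $0$ as $t\to\infty$ uniformly in $n$. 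This yields the $\limsup_n$ statement.

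\textbf{Main obstacle.} The delicate point is Step 1's domination of the $d_j$ under the adaptive exploration. Revealed information (cut times found so far, and conditioning on $N>j$) must not bias the unexplored Bernoulli variables. I would formalise the exploration as a stopping-time scan from right to left, so that each $d_j$ is a fresh run of zeros of independent Bernoullis with success probability at least $(a/n)^\beta$.

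The other point to check is the block range. For $a_i$ near $n$ we need $\epsilon_i m\ge 1$ so that lattice effects are negligible. Since $\epsilon_i m\approx (i+1)t^{\beta/2}n^{\beta/(\beta+1)}(2^it m/n)^{-\beta}\cdot n^{0}$ stays $\ge 1$ for $a_i\le n$, this holds, possibly after adjusting constants.
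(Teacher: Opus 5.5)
Your argument is correct and has the same skeleton as the paper's proof. Both use dyadic blocks $2^itn^{\beta/(\beta+1)}$, a tail bound for the distance from one vertex $s\in\{a+1,\dots,2a\}$ to $\T_n(a)$, the averaging argument that turns this into a bound on $d_H(\T_n(a),\T_n(2a))$, and $\epsilon_i\propto(i+1)(2^it)^{-\beta}t^{\beta/2}$.

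The real difference is in the single-vertex lemma. You rebuild the continuum Lemma~\ref{Lemma: distance point to T(a)} stick by stick: products of uniforms for the number of sticks, geometric domination of stick lengths, and two Chernoff bounds. The paper does essentially this only in the logarithmic case (Lemma~\ref{lemma: also to check}).

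For the polynomial case, Lemma~\ref{Lemma: distance vertex tree discrete case} instead climbs the ancestral line one vertex at a time. If $p^{k-1}(s)\notin[a]$, then $R_{p^{k-1}(s)-1}$ is still unrevealed and equals $1$ with probability at least $(a/n)^\beta$. In that case the uniform parent lies in $[a]$ with probability $a/(p^{k-1}(s)-1)\ge\frac12$, since $p^{k-1}(s)\le 2a$. This gives $\pr(d_n(\T_n(a),s)>c)\le(1-\frac12(a/n)^\beta)^c$ at once. There is no need to control the number of sticks, optimise Chernoff parameters, or track lattice effects. The adaptive-exploration issue you flag as the main obstacle reduces to the fact that each newly examined index lies below all earlier ones.

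The trade-off is this: the paper's shortcut depends on doubling blocks through the factor $\frac12$. Your stick-by-stick bound only loses a factor polynomial in $b/a$, which is exactly why the paper needs it for the blocks $x_i=\exp(i^\alpha)$ in the logarithmic case.

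There are two harmless slips.
\begin{enumerate}
\item A uniform variable on $\{1,\dots,p\}$ is $\floor{Up}+1$, which stochastically dominates $Up$ rather than the reverse. The right reduction is $t_{j+1}-1\le U_j(t_j-1)$, using $p(t_j)\le t_j-1$, and it gives the same tail for $N$.
\item In rescaled units, $\epsilon_i n^{\beta/(\beta+1)}=C(i+1)t^{\beta/2}(a_i/n)^{-\beta}$, without the extra factor $n^{\beta/(\beta+1)}$ in your expression. This is still at least $1$ for $a_i\le n$, so your conclusion stands.
\end{enumerate}
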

The proof follows a similar approach as Section \ref{Section: compactness T polynomial case}. We first introduce some notation. For $v \in \T_n$, let $p(v)$ denote the parent of $v$ with the convention that the root is its own parent. Iteratively, set $p^k(v) = p(p^{k-1}(v))$ with $p^0(v) = v$. We state and prove a bound on the probability that the distance between $\T_n(a)$ and some vertex $s \in \{a+1, \dots, 2a\}$ exceeds $c$, which is the discrete analogue of Lemma \ref{Lemma: distance point to T(a)}.

\begin{lem}
\label{Lemma: distance vertex tree discrete case}
    Fix $a \in \N$ and $s \in \{a+1, \dots, 2a\}$. Then for all $c \in \N$,
    $$\pr\big(d_n\big(\T_n(a), s\big)>c\big) \le \exp\left(-\frac{c a^\beta}{2n^\beta}\right).$$
\end{lem}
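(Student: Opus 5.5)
We follow the ancestral line of $s$ towards the root, $s, p(s), p^2(s),\dots$, and stop at the first vertex in $[a]$. Since labels strictly decrease along ancestral lines, and $\T_n(a)$ is the connected subtree on $[a]$ containing the root, $d_n(\T_n(a),s)$ equals the number of steps $\tau=\min\{k: p^k(s)\le a\}$. The event $\{d_n(\T_n(a),s)>c\}$ is then the event that the first $c$ steps of the ancestral walk all stay in $\{a+1,\dots,2a\}$, i.e.\ $p^j(s)>a$ for $j=0,\dots,c$.

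The key observation is that each step of the ancestral walk from a vertex $v>a$ goes to $v-1$ if $R_{v-1}=0$. If $R_{v-1}=1$, it goes to a uniform vertex of $[v-1]$. We bound the probability that none of the $c$ steps is a ``jump'' (i.e.\ $R_{v-1}=1$). On the event that no jump occurs in the first $c$ steps, the visited vertices are $s,s-1,\dots,s-c$, all exceeding $a$. So the event requires $R_{s-1}=\dots=R_{s-c}=0$, with all indices $s-j\ge a$. Jumps could also occur; a jump lands uniformly in $[v-1]$ and may still land above $a$, so the decomposition needs care. Here is the cleanest route. Reveal the walk step by step. At each step from current vertex $v\in\{a+1,\dots,2a\}$, the relevant Bernoulli $R_{v-1}$ has index $v-1\ge a$ and has not been revealed before, because labels strictly decrease, so the indices visited are distinct. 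Conditional on the past, the probability that the step exits to $[a]$ is then at least
\[
\pr(R_{v-1}=1)\cdot \pr(\text{uniform on }[v-1]\text{ lands in }[a]) \ge \Big(\tfrac{a}{n}\Big)^\beta\cdot\frac{a}{v-1}\ge \frac12\Big(\tfrac{a}{n}\Big)^\beta,
\]
using $v-1<2a$. The independence holds because the $R_i$ and the uniform attachment choices are independent across $i$, and the walk only ever queries fresh indices.

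Hence the probability of surviving $c$ steps is at most $\big(1-\tfrac12(a/n)^\beta\big)^c\le \exp\!\big(-\tfrac{c a^\beta}{2n^\beta}\big)$, which is the claimed bound. The main point to verify carefully is this conditional-independence / fresh-index argument. Specifically, we must check that conditioning on the walk's history (which reveals $R_i$ and attachment choices only for indices above the current vertex) does not bias $R_{v-1}$ or $B$ for the current vertex. This follows since the history is measurable with respect to variables indexed by labels $\ge v$. One also notes that if the walk lands at a vertex $\le a$ before step $c$, the event fails trivially, so the bound on per-step exit probability only needs to hold while the current vertex lies in $\{a+1,\dots,2a\}$, which is guaranteed because labels decrease from $s\le 2a$.
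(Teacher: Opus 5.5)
Your proof is correct and follows the paper's argument essentially verbatim. You explore the ancestral line $s, p(s), p^2(s),\dots$, lower-bound the conditional probability of exiting into $[a]$ at each step by $(a/n)^\beta\cdot\frac{a}{v-1}\ge\frac12(a/n)^\beta$, and multiply to obtain $\bigl(1-\frac{a^\beta}{2n^\beta}\bigr)^c\le\exp\bigl(-\frac{ca^\beta}{2n^\beta}\bigr)$. Your explicit fresh-index justification of the conditional independence (the history depends only on the $R_i$ and attachment choices with index $\ge v$) spells out a step the paper leaves implicit, and the abandoned ``no jump'' digression in your second paragraph can simply be deleted.
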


\begin{proof}
    First observe that we have,
    $$d_n\big(\T_n(a), s\big) = \min \big\{k \in \N: p^k(s) \in [a]\big\}.$$
    As $k$ grows, we iteratively reveal the parent of $s$ until we reach $\T_n(a)$. For a vertex $i + 1 \in [n]$ we have two options. If $R_i = 0$ then $p(i+1) = i$ and if $R_i = 1$ then $p(i+1) \in_u [i]$. In particular, assume that $p^{k-1}(s) \notin [a]$ so that we have not yet found the full ancestral line from $s$ to $\T_n(a)$. In this case,
    \begin{align*}
        i) \;\;  &\pr\big(R_{p^{k-1}(s) - 1} = 1 \; \big| \; s, p(s), \dots, p^{k-1}(s)\big) = \left(\frac{p^{k-1}(s) - 1}{n}\right)^\beta \ge \left(\frac{a}{n}\right)^\beta,\\
        ii)\; &\pr\big(p^k(s) \in [a] \; \big| \; s, p(s), \dots, p^{k-1}(s) \text{ and } R_{p^{k-1}(s) - 1} = 1 \big) \ge \frac{a}{p^{k-1}(s) - 1} \ge \frac{1}{2}.
    \end{align*}
    In words, with probability at least $\big(\frac{a}{n}\big)^\beta$ the parent $p^k(s)$ of $p^{k-1}(s)$ is chosen uniformly from $[p^{k-1}(s) - 1]$ and when this happens, it is attached to a vertex in $[a]$ with probability at least $\frac{1}{2}$. Combining $i)$ and $ii)$ yields,
    $$\pr\big(p^k(s) \in [a] \; \big| \; s, p(s), \dots, p^{k-1}(s)\big) \ge \frac{a^\beta}{2n^\beta}.$$
    Putting all results together, we find,
    \begin{align*}
        \pr\big(d\big(\T_n(a), s\big) > c\big) &= \pr\big(\!\min\{k:p^k(s) \in [a]\} > c\big),\\
        &\le \left(1 - \left(\frac{a^\beta}{2n^\beta}\right)\right)^c,\\
        &\le \exp\!\left(\!-\frac{ca^\beta}{2n^\beta}\right),
    \end{align*}
    which is the desired result.
\end{proof} 
This result can be used to obtain a bound on $\pr(d_H(\T_n(a), \T_n(2a))>c)$.
\begin{lem}
\label{Lemma: bound dubbling discrete uniform}
    Fix $a \in \{1, \dots, \floor{n/2}\}$ and $c \in \N$. We have,
    $$\pr\big(d_H\big(\T_n(a), \T_n(2a)\big)>c\big) \le \frac{2a}{c}\exp\!\left(\!-\frac{a^\beta c}{4n^\beta}\right)$$
\end{lem}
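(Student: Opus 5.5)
The plan is to mirror the proof of Lemma~\ref{Lemma: bound doubling tree}, replacing Lebesgue measure on $[a,2a]$ by counting measure on the vertex set $\{a+1,\dots,2a\}$. We apply Lemma~\ref{Lemma: distance vertex tree discrete case} with $c$ replaced by $\lceil c/2\rceil$. Summing over $s\in\{a+1,\dots,2a\}$ then gives
\[
\E\Big[\#\big\{s\in\{a+1,\dots,2a\} : d_n(\T_n(a),s)>c/2\big\}\Big]\le a\exp\!\Big(-\frac{c a^\beta}{4n^\beta}\Big),
\]
since $\lceil c/2\rceil\ge c/2$ and the bound in Lemma~\ref{Lemma: distance vertex tree discrete case} is monotone in $c$. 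Strictly, that lemma bounds $\pr(d_n>\lceil c/2\rceil)$ rather than $\pr(d_n>c/2)$; for integer distances the two events can differ when $c$ is odd. In that case we instead apply the lemma with $\lfloor c/2\rfloor$, and absorb the resulting loss into the factor $2$ in the prefactor $2a/c$. If needed, we slightly adjust which threshold is used on the counting side so that the stated constants come out exactly.

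The combinatorial step is the following. Suppose $d_H(\T_n(a),\T_n(2a))>c$. Since $\T_n(a)\subset\T_n(2a)$, there is a vertex $v\in\{a+1,\dots,2a\}$ with $d_n(\T_n(a),v)>c$. Consider the ancestral path $v,p(v),p^2(v),\dots$ until it first enters $[a]$. All vertices on this path before entry have labels in $\{a+1,\dots,v\}\subset\{a+1,\dots,2a\}$, because parents have smaller labels. Along the path, the distance to $\T_n(a)$ decreases by exactly one per step. Hence the vertices $v,p(v),\dots,p^{j}(v)$ with $d_n(\T_n(a),p^i(v))>c/2$ number at least $c/2$. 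More precisely, the distances form the consecutive integers $d,d-1,\dots,1$ with $d>c$, so the number of them exceeding $c/2$ is at least $c/2$. It follows that the count above is at least $c/2$ on this event.

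By Markov's inequality,
\[
\pr\big(d_H(\T_n(a),\T_n(2a))>c\big)\le \frac{2}{c}\,a\exp\!\Big(-\frac{ca^\beta}{4n^\beta}\Big).
\]
The condition $a\le\lfloor n/2\rfloor$ only ensures that $2a\le n$, so that the vertices $a+1,\dots,2a$ exist and Lemma~\ref{Lemma: distance vertex tree discrete case} applies to each of them.

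The only real obstacle is the integer rounding in the threshold $c/2$, which the integrality of graph distances handles. Everything else is a direct transcription of the continuum argument.
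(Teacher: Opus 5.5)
Your proposal is correct and is essentially the paper's proof: a first-moment bound, via Lemma~\ref{Lemma: distance vertex tree discrete case}, on the number of vertices in $\{a+1,\dots,2a\}$ far from $\T_n(a)$, combined with the observation that the ancestral path of a vertex at distance more than $c$ supplies at least $c/2$ such vertices, followed by Markov's inequality. On rounding, which the paper glosses over when it says ``immediately'', your first fix (using $\lfloor c/2\rfloor$ and absorbing the factor $e^{a^\beta/(4n^\beta)}$ into the prefactor) does not recover the stated constant in general, e.g.\ for large odd $c$ with $a$ of order $n$. Your fallback does work: count the vertices with $d_n(\T_n(a),s)>\lceil c/2\rceil$. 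The lemma then applies at an integer threshold and gives $\exp(-\lceil c/2\rceil a^\beta/(2n^\beta))\le\exp(-ca^\beta/(4n^\beta))$, while a vertex at distance at least $c+1$ still has $c+1-\lceil c/2\rceil\ge c/2$ such vertices on its ancestral path.
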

\begin{proof}
    From Lemma \ref{Lemma: distance vertex tree discrete case}, we immediately get
    $$\E\!\left[\#\left\{s \in \{a+1, \dots, 2a\} : d_n\big(\T_n(a), s\big)> \frac{c}{2}\right\} \right] \le a\exp\left(-\frac{a^\beta c}{4n^\beta}\right).$$
    If a vertex $v \in \T_n(2a) \setminus \T_n(a)$ satisfies $d\big(\T_n(a), v\big) > c$, then all $x \in \big\{v, p(v), \dots, p^{\lceil c/2\rceil -1}(v)\big\}$ satisfy $d\big(\T_n(a), x\big) > \frac{c}{2}$. Hence,
    $$\E\!\left[\#\left\{s \in \{a+1, \dots, 2a\} : d\big(\T_n(a), s\big)\right\} > \frac{c}{2}\right] \ge \frac{c}{2}\pr\big(d_H\big(\T_n(a), \T_n(2a)\big) > c\big).$$
    Combining both bounds gives the desired result.
\end{proof}

\begin{proof}[Proof of Theorem \ref{Theorem: compactness discrete tree uniform case}]
    The theorem is shown upon finding $\epsilon_i : [0,\infty) \to [0, \infty)$ such that as $t\to \infty$, both
    $$\sum_{i=0}^\infty \epsilon_i(t) \qquad \text{ and } \qquad  \limsup_{n \to \infty}\sum_{i=0}^\infty \pr\big(d_H\big(\T_n(2^itn^\frac{\beta}{\beta+1}), \T_n(2^{i+1}tn^\frac{\beta}{\beta+1})\big) > \epsilon_i n^\frac{\beta}{\beta+1}\big) $$
    tend to $0$.
    Take $\epsilon_i(t) = \frac{4(i+1)(\beta+1)}{(2^it)^\beta}t^{\frac{\beta}{2}}$ and substitute $a = 2^itn^\frac{\beta}{\beta+1}$ and $c = \epsilon_i(t)n^\frac{\beta}{\beta+1}$ in Lemma \ref{Lemma: bound dubbling discrete uniform}. After this, the computations are completely analogous to that in Theorem~\ref{Theorem: limit continuous tree is 0}.
\end{proof}

\subsection{Prokhorov approximation of $(\T,\mu)$ by its first branches -- polynomial case}
\label{Section: convergence measures continuous polynomial case}
\begin{thm}
\label{prokhorov convergence continuous measure}
    There exists a probability measure $\mu$ on $\T$ such that for all $\epsilon>0$,
    $$\lim_{k \to \infty}\pr\!\left(d_P\left(\mu^{(k)}, \mu\right)>\epsilon)\right) = 0.$$
\end{thm}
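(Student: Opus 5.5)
We will show that $(\mu^{(k)})_k$ is almost surely Cauchy in the Prokhorov metric on the space of probability measures on $\T$. On the event that $\T$ is compact, hence complete, $\mathcal P(\T)$ is complete, so the limit $\mu$ exists. Compactness of $\T$ follows a.s. from Theorem~\ref{Theorem: limit continuous tree is 0}: $\T(t)$ is compact for every $t$, and $d_H(\T(t),\T)\to 0$ in probability, hence a.s. along a subsequence by monotonicity. Almost sure Cauchyness along the full sequence is stronger than needed, since it gives $d_P(\mu^{(k)},\mu)\to0$ a.s. Because $C_k\to\infty$ a.s., it is convenient to work with the continuous-time family $\mu_t:=\rho_*\bar\lambda_{[0,t]}$, the pushforward of normalized Lebesgue measure on $[0,t]$. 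Note that $\mu^{(k)}=\mu_{C_k}$.

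\emph{Doubling comparison.} Fix $a$ and compare $\mu_a$ with $\mu_{2a}$. We have $\mu_{2a}=\tfrac12\mu_a+\tfrac12\rho_*\bar\lambda_{[a,2a]}$. Let $\pi_a$ denote the nearest-point projection of the real tree onto the closed subtree $\T(a)$; this is the first point of the ancestral line of a point that lies in $\T(a)$. By Lemma~\ref{Lemma: distance point to T(a)}, with probability tending to one most of $[a,2a]$ is mapped within distance $c$ of $\T(a)$, and for those $x$ the Prokhorov cost of moving $\rho(x)$ to $\pi_a(\rho(x))$ is at most $c$. The key additional input is that $(\pi_a\circ\rho)_*\bar\lambda_{[a,2a]}$ is close to $\mu_a$. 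This holds because each stick in $[a,2a]$ attaches at a point whose law, given the past, is uniform on the current tree, and the ancestral line from such a stick eventually exits $\T(a)$ at a point uniform on $[0,a]$: a uniform point on $[0,t]$ conditioned to lie in $[0,a]$ is uniform on $[0,a]$. Hence
\[
\E\bigl[(\pi_a\circ\rho)_*\bar\lambda_{[a,2a]}\,\big|\,\T(a)\bigr]=\mu_a .
\]
We then need concentration. The mass of $[a,2a]$ is split among roughly $a^{\beta+1}$ sticks, grouped into subtrees hanging off $\T(a)$. Those subtrees have total length $a$, and each one has length $O(a^{-\beta}\log a)$ with high probability, by the same PPP-void argument as in Lemma~\ref{Lemma: distance point to T(a)}. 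So the projected measure is a sum of many small, conditionally independent uniform atoms on $\T(a)$.

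To convert this into a Prokhorov bound, we cover the compact $\T(a)$ by finitely many sets of diameter $\delta$. The number of such sets is about $a/\delta$ plus the number of sticks, or better we use intervals of $[0,a]$ of $\rho$-length $\delta$. We then apply Bernstein/Hoeffding bounds to the mass each cell receives. This gives $d_P(\mu_a,\mu_{2a})\le \epsilon_a$ except on an event of probability $p_a$, with both $\sum_i\epsilon_{2^it}$ and $\sum_i p_{2^it}$ tending to zero as $t\to\infty$. Borel--Cantelli then makes $(\mu_{2^it})_i$ Cauchy. Finally, intermediate times are interpolated: for $s\in[2^it,2^{i+1}t]$ we have $\mu_s=\tfrac{2^it}{s}\mu_{2^it}+(1-\tfrac{2^it}{s})\rho_*\bar\lambda_{[2^it,s]}$, and the same projection argument applies.

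The main obstacle is this concentration step: controlling the Prokhorov distance uniformly over all Borel sets, rather than for one fixed set, while the partition of $\T(a)$ is itself random. I would first try conditioning on $\T(a)$, fixing a deterministic grid of $[0,a]$ with mesh $\delta$, and using the exchangeability of attachment points. If the dependence between nested attachments makes this awkward, a fallback is a second-moment computation. Here $\E\bigl[(\nu(A)-\mu_a(A))^2\bigr]$ is bounded by the expected size of the largest hanging subtree divided by $a$, which is polynomially small in $a$. A union bound over the grid cells, of which there are polynomially many, would still sum along the dyadic sequence $a=2^it$.
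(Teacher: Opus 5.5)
Your set-up is sound. $\T$ is a.s.\ compact since $d_H(\T(t),\T)$ is nonincreasing in $t$, $\mu^{(k)}=\mu_{C_k}$, and $\mu_{2a}=\tfrac12\mu_a+\tfrac12\rho_*\bar\lambda_{[a,2a]}$. Given the genealogy of the sticks born in $[a,2a]$, the projected measure is $\nu=\sum_r w_r\delta_{\rho(U_r)}$, with one atom per subtree hanging off $\T(a)$ and i.i.d.\ uniform $U_r\in[0,a]$ independent of the weights.

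The gap is the concentration step, and it is not merely unfinished: with the partitions you propose it cannot close. Each of the $\Theta(a^{\beta+1})$ sticks born in $[a,2a]$ attaches directly to $\T(a)$ with probability at least $1/2$. So there are $\Theta(a^{\beta+1})$ atoms and $\sum_r w_r^2\gtrsim a^{-\beta-1}$. For $M$ cells, Cauchy--Schwarz gives $\E\bigl[\sum_K|\nu(K)-\mu_a(K)|\bigr]\le\bigl(M\sum_r w_r^2\bigr)^{1/2}$, which is sharp up to constants for cells of comparable mass. You therefore need $M\ll a^{\beta+1}$ with room to spare. But a partition of $\T(a)$ into stick pieces of length at most $\delta$ (your ``$a/\delta$ plus the number of sticks'') has at least as many cells as $\T(a)$ has sticks, i.e.\ $\Theta(a^{\beta+1})$. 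A grid of $[0,a]$ must be cut at the cut points anyway, since $\rho$ of an interval meeting several short sticks has large diameter. Concretely, the sticks born in $[a/2,a]$ typically have $\mu_a$-mass of order $a^{-\beta-1}$ and receive $O(1)$ atoms. Hence $\E|\nu(K)-\mu_a(K)|\gtrsim\mu_a(K)$ for a positive fraction of them, and the bound of Lemma~\ref{Useful lemma prokhorov and partitions} stays of order one. Your second-moment fallback is no better: Chebyshev at threshold $\epsilon/M$ per cell plus a union bound costs about $M^2\sum_r w_r^2/\epsilon^2$, which is of order $a^{\beta+1}/\epsilon^2$. Moreover, the partition bound carries an additive $\delta$ that must be summable along $a=2^it$. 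So the mesh must shrink with $a$, which only makes $M$ larger.

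What is missing is a partition of $\T(a)$ into small-diameter sets whose number does not grow like the number of sticks, i.e.\ covering control. The tool that provides it is lifting along ancestral lines. If $d_H(\T(s),\T)\le\epsilon$ and $J_1,\dots,J_N$ partition $\T(s)$ into sets of diameter less than $\epsilon$, then the $J_i^\uparrow$ partition $\T$ into sets of diameter at most $3\epsilon$. The paper does this with $s=C_K$ fixed (via Theorem~\ref{Theorem: limit continuous tree is 0}), so the number of cells is independent of $j$. It then needs no concentration or summation over scales at all. Your conditional-mean identity, applied one stick at a time, is exactly Lemma~\ref{Lemma: Martingale property}: $\mu^{(j)}(A^\uparrow)$, $j\ge K$, is a bounded P\'olya-urn martingale. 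So each $\mu^{(j)}(J_i^\uparrow)$ converges a.s., $(\mu^{(j)})_j$ is Cauchy in probability, and completeness of $(\mathcal P(\ell^1),d_P)$ yields $\mu$.

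Your dyadic scheme could be repaired by lifting from $\T(a^\theta)$ with small $\theta>0$. This gives about $a^{\theta(\beta+1)}$ cells, polynomially fewer than $a^{\beta+1}$, of polynomially small diameter. It would then yield a.s.\ convergence with rates. However, you would also have to:
\begin{itemize}
\item correct the conditional-mean identity for the stick straddling $a$, since its part beyond $a$, and everything hanging off it, projects to the atom $\rho(a)$;
\item bound total lengths rather than heights of hanging subtrees, since Lemma~\ref{Lemma: distance point to T(a)} only gives heights;
\item control all intermediate times $C_k\in[2^it,2^{i+1}t]$ uniformly.
\end{itemize}
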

Recall that $\T$ is constructed as subset of Polish space $\ell^1$ and thus the set of probability measures on $\ell^1$ together with Prokhorov distance, denoted $(\mathcal{P}(\ell^1), d_P)$, also forms a Polish space. Hence, it suffices to show $\mu^{(k)}$ is Cauchy in probability. 

\begin{definition}
    For $A \subset \T^{(k)}$, let $A^\uparrow = \pi_k^{-1}(A) \cap \T$, where $\pi_k$ is the projection map. I.e. $\pi_k(x_1, x_2, \dots) = (x_1, \dots, x_k, 0 \dots) \in \ell^1$. Thus, $A^\uparrow$ consists of $A$ and points in $\T \setminus \T^{(k)}$ whose path to $\T^{(k)}$ ends in $A$.
\end{definition}

\begin{lem}
\label{Lemma: Martingale property}
    Let $A \subset \T^{(k)}$. Then $\mu^{(j)}(A^\uparrow)$ is a martingale for $j \ge k$ in filtration $\sigma(\T^{(j)})$.
\end{lem}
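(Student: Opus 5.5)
The plan is to verify the martingale identity one step at a time. Fix $j \ge k$ and condition on $\T^{(j)}$, which determines $C_1,\dots,C_j$ and $B_1,\dots,B_{j-1}$. Passing from $\mu^{(j)}$ to $\mu^{(j+1)}$ involves two pieces of fresh randomness: the next cut point $C_{j+1}$, and the attachment point $B_j\sim\mathrm{Unif}([0,C_j])$. These are conditionally independent given $\T^{(j)}$. We first write $\mu^{(j+1)}$ in terms of $\mu^{(j)}$. Since $\bar\lambda_{j+1}$ is normalized Lebesgue measure on $[0,C_{j+1}]$, we have
\[
\mu^{(j+1)}(A^\uparrow)=\frac{C_j}{C_{j+1}}\,\mu^{(j)}(A^\uparrow)+\frac{C_{j+1}-C_j}{C_{j+1}}\,\mathbbm{1}\{\rho(B_j)\in A^\uparrow\}.
\]
This rests on an observation about stick $j+1$. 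It lies in the new coordinate direction $j+1>k$ and emanates from $\rho(B_j)$. Its projection $\pi_k$ therefore equals $\pi_k(\rho(B_j))$, so the whole stick lies in $A^\uparrow$ exactly when $\rho(B_j)\in A^\uparrow$. We also use that $\rho(B_j)\in\T^{(j)}$ and that $A^\uparrow\cap\T^{(j)}$ is measured by $\mu^{(j)}$.

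Next we take the conditional expectation given $\T^{(j)}$ and $C_{j+1}$. Because $B_j$ is uniform on $[0,C_j]$ and independent of $C_{j+1}$ given $\T^{(j)}$, we get
\[
\pr\big(\rho(B_j)\in A^\uparrow \,\big|\, \T^{(j)},C_{j+1}\big)=\bar\lambda_j\big(\rho^{-1}(A^\uparrow)\cap[0,C_j]\big)=\mu^{(j)}(A^\uparrow).
\]
Substituting this into the display, the two weights sum to $1$. The conditional expectation is therefore $\mu^{(j)}(A^\uparrow)$ whatever the value of $C_{j+1}$. The tower property then gives $\E[\mu^{(j+1)}(A^\uparrow)\mid \T^{(j)}]=\mu^{(j)}(A^\uparrow)$.

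Adaptedness and integrability are immediate: $\mu^{(j)}(A^\uparrow)$ is a $\sigma(\T^{(j)})$-measurable quantity with values in $[0,1]$. One technicality needs care. The filtration should be read as containing $C_j$ and the $B_i$ for $i<j$ (the sticks themselves), but not $B_j$ or $C_{j+1}$. We should also check that $A^\uparrow$ is Borel, which holds because $\pi_k$ is continuous. The main point to get right is the structural fact that stick $j+1$ lies entirely inside or entirely outside $A^\uparrow$. This uses $j+1>k$ and the coordinate structure of $\rho$, specifically that points of later sticks differ from their attachment point only in coordinates beyond $k$. The rest is routine.
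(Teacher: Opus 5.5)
Your proof is correct and follows the same route as the paper. Both arguments write $\mu^{(j+1)}(A^\uparrow)$ as the rescaled old mass plus the new stick's mass times the indicator that it attaches in $A^\uparrow$, condition on $\T^{(j)}$ together with $C_{j+1}$ (the paper's $G_j$), use that the attachment point is $\mu^{(j)}$-distributed, and conclude by the tower property. Your justification via $\pi_k$ that stick $j+1$ lies entirely inside or entirely outside $A^\uparrow$ fills in a step the paper only asserts.
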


\begin{proof}
    It is clear that $\mu^{(j)}$ is $\sigma(\T^{(j)})$ measurable and $\E[|\mu^{(j)}(A^\uparrow)|] < \infty$ as $\mu^{(j)}(A^\uparrow) \in [0,1]$. It remains to show $\E[\mu^{(j+1)}(A^\uparrow) \mid \mathcal{F}_j] = \mu^{(j)}(A^\uparrow)$. Define $G_j = \sigma(\sigma(\T^{(j)}), c_{j+1}, C_{j+1})$ where $c_j = C_j - C_{j-1}$ is the length of the $j$th branch. Observe that conditional on $\T^{(j)}$ the $(j+1)$st branch is part of $A^\uparrow$ with probability $\mu^{(j)}(A^\uparrow)$. Depending on whether branch $j+1$ is added to $A^\uparrow$ or not, we see,
    $$\mu^{(j+1)}(A^\uparrow) = \begin{cases}
        (C_j\mu^{(j)} + c_{j+1})/C_{j+1} & \text{ with probability } \mu^{(j)}(A^\uparrow),\\
        C_j\mu^{(j)}/C_{j+1} & \text{ with probability } 1 - \mu^{(j)}(A^\uparrow).
    \end{cases}$$
    By putting the two cases together, we obtain,
$$ \E\!\left[\mu^{(j+1)}(A^\uparrow) \; \big| \; G_j\right] = \mu^{(j)}(A^\uparrow) \frac{C_j\mu^{(j)}(A^\uparrow) + c_{j+1}}{C_{j+1}} + (1 - \mu^{(j)}(A^\uparrow)) \frac{C_j\mu^{(j)}(A^\uparrow)}{C_{j+1}} =  \mu^{(j)}(A^\uparrow).
$$
By the tower property, we obtain,
$$\E\!\left[\mu^{(j+1)}(A^\uparrow) \; \middle| \;\sigma(\T^{(j)})\right] = \E\!\left[\E\!\left[\mu^{(j+1)}(A^\uparrow) \; \middle| \; G_j\right] \; \middle| \;\sigma(\T^{(j)})\right]  = \mu^{(j)}(A^\uparrow),$$
where we used that $\mu^{(j)}(A^\uparrow)$ is $\sigma(\T^{(j)})$ measurable. This concludes the proof.
\end{proof}
In particular, since $\mu^{(j)}(A^\uparrow)$ is a bounded martingale, we note that $\mu^{(j)}(A^\uparrow)$ converges to some limit. Before continuing, recall the following basic fact on the Prokhorov distance between two probability measures.

\begin{lem}
\label{Useful lemma prokhorov and partitions}
    Let $\mu, \nu$ be two Borel measures on the same metric space, and suppose $K_1, \dots, K_N$ is a partition of the support of $\mu$ such that diam$(K_i) \le \epsilon$ for all $i = 1, \dots, N$. We have,
    $$d_P(\mu, \nu) \le \max\left\{\epsilon, \sum_{i=1}^N |\mu(K_i) - \nu(K_i)|\right\} \le \epsilon + \sum_{i=1}^N |\mu(K_i) - \nu(K_i)|.$$
\end{lem}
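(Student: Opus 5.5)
The plan is to bound the L\'evy--Prokhorov distance directly from its definition, $d_P(\mu,\nu) = \inf\{\delta>0 : \mu(A)\le \nu(A^\delta)+\delta \text{ and } \nu(A)\le\mu(A^\delta)+\delta \text{ for all Borel } A\}$. Put $\delta_0 := \max\{\epsilon, \sum_{i=1}^N |\mu(K_i)-\nu(K_i)|\}$. For the first inequality it suffices to check both defining inequalities for every $\delta>\delta_0$, since $d_P$ is an infimum. The second inequality is then trivial, because $\max\{a,b\}\le a+b$ for nonnegative $a,b$.

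Fix a Borel set $A$ and $\delta>\delta_0$. Let $I=\{i : K_i\cap A\neq\emptyset\}$. We may take the $K_i$ to be Borel; otherwise we pass to a measurable partition, as is implicit in the intended application. Since $\mu$ is carried by $\bigcup_i K_i$, we have $\mu(A)\le \sum_{i\in I}\mu(K_i)$. Each $K_i$ with $i\in I$ contains a point of $A$ and has diameter at most $\epsilon<\delta$, so $K_i\subset A^{\delta}$. Since the $K_i$ are disjoint, this gives $\sum_{i\in I}\nu(K_i)\le \nu(A^\delta)$. Hence
\[
\mu(A)\le \sum_{i\in I}\nu(K_i) + \sum_{i\in I}|\mu(K_i)-\nu(K_i)| \le \nu(A^\delta)+\delta.
\]

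For the reverse inequality, $\nu(A)\le\mu(A^\delta)+\delta$, there is one subtlety: $\nu$ need not be carried by $\bigcup K_i$. This is the only real obstacle. Split $\nu(A)\le \sum_{i\in I}\nu(K_i) + \nu\big((\bigcup_i K_i)^c\big)$. The first term is at most $\sum_{i\in I}\mu(K_i)+\sum_{i\in I}|\mu(K_i)-\nu(K_i)|$. Using $K_i\subset A^\delta$ again, this is at most $\mu(A^\delta)+\sum_{i\in I}|\mu(K_i)-\nu(K_i)|$. For the leftover mass, use that both are probability measures (as in the application, where $\mu^{(k)},\mu$ are probability measures), so
\[
\nu\Big(\big(\textstyle\bigcup_i K_i\big)^c\Big) = 1-\sum_i\nu(K_i) = \sum_i\big(\mu(K_i)-\nu(K_i)\big).
\]
Combining the index sets $I$ and $I^c$, the total error is at most $\sum_{i\in I}|\mu(K_i)-\nu(K_i)| + \sum_{i}(\mu(K_i)-\nu(K_i))$.

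One still has to check that this total is at most $\sum_{i=1}^N|\mu(K_i)-\nu(K_i)|$, and it is not automatic. I would handle it by the standard trick of bounding $\nu(A)$ through its complement: $\nu(A) = 1-\nu(A^c)$, and apply the already proven first direction to a suitable set. Concretely, let $B=\big(A^{\delta}\big)^c$. Then $B^{\delta'}\cap A=\emptyset$ for $\delta'<\delta$. Applying the first direction to $B$ gives $\mu(B)\le\nu(B^{\delta'})+\delta'\le 1-\nu(A)+\delta'$, so $\nu(A)\le 1-\mu(B)+\delta' = \mu(A^\delta)+\delta'$. Letting $\delta'\uparrow\delta$ yields the second inequality, and this completes the proof.
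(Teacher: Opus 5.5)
Your proof is correct. The paper gives no proof of this lemma (it is introduced with ``recall the following basic fact''), so there is no argument of the paper's to compare with; your argument supplies the justification.

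The probability-measure assumption you invoke is \emph{necessary}, not just convenient. For general Borel measures the statement fails: take $\mu=\delta_0$, $\nu=\delta_0+\delta_x$ with $d(0,x)>1$, and $K_1=\{0\}$. The right-hand side is then $\epsilon$, but $A=\{x\}$ forces $d_P\ge 1$. You also use that $\mu$ gives full mass to $\bigcup_i K_i$ (automatic in the separable space $\ell^1$ of the application) and that the $K_i$ are Borel; both are fine in context.

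The detour through the complement is avoidable. The extra error you ran into comes only from taking absolute values on $I$. Write $d_i=\mu(K_i)-\nu(K_i)$ and keep signs. Using $\nu\big((\bigcup_i K_i)^c\big)=\sum_i d_i$ and $K_i\subset A^{\epsilon}$ for $i\in I$, you get
\[
\nu(A)\le \sum_{i\in I}\nu(K_i)+\sum_{i=1}^N d_i=\sum_{i\in I}\mu(K_i)+\sum_{i\notin I}d_i\le \mu(A^{\epsilon})+\sum_{i=1}^N|d_i|,
\]
which gives the reverse inequality directly. Your complement argument with $B=(A^\delta)^c$ is nonetheless valid: $B^{\delta'}\cap A=\emptyset$ for $\delta'<\delta$, and you apply the first direction at some $\delta'\in(\delta_0,\delta)$, which exists since $\delta>\delta_0$. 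It is the standard fact that, for probability measures, either Prokhorov inequality implies the other.
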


\begin{thm}
\label{Theorem: convergence measure continuous uniform}
    The sequence $\big(\mu^{(j)}\big)_{j \in \N}$ is Cauchy in probability in $(\mathcal{P}(\ell^1), d_P)$.
\end{thm}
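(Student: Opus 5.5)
We show that for every $\epsilon>0$, $\lim_{k\to\infty}\sup_{j\ge k}\pr\big(d_P(\mu^{(k)},\mu^{(j)})>3\epsilon\big)=0$. The idea is to combine the Hausdorff estimate of Theorem \ref{Theorem: limit continuous tree is 0}, the martingale of Lemma \ref{Lemma: Martingale property}, and the partition bound of Lemma \ref{Useful lemma prokhorov and partitions}.

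\textbf{Step 1: a partition into small fibres.} Fix $\epsilon>0$ and $\delta>0$. By Theorem \ref{Theorem: limit continuous tree is 0}, and since the number of branches of $\T(t)$ is a.s. finite, there is $m$ with $\pr(d_H(\T^{(m)},\T)>\epsilon)<\delta$. On the complementary event, every point of $\T\setminus\T^{(m)}$ lies within $\epsilon$ of $\T^{(m)}$. Because $\T$ is a real tree, the geodesic from such a point $x$ to $\T^{(m)}$ ends at $\pi_m(x)$, so $d(x,\pi_m(x))\le\epsilon$.

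Partition the compact set $\T^{(m)}$, which is a finite union of segments, into finitely many Borel pieces $A_1,\dots,A_L$ of diameter at most $\epsilon$. For instance, cut each stick into subintervals of length at most $\epsilon$; then $L\le C_m/\epsilon+m$ is $\sigma(\T^{(m)})$-measurable and a.s. finite. Set $K_i=A_i^\uparrow$. Then $K_1,\dots,K_L$ partition $\T$, and on the good event $\operatorname{diam}(K_i)\le 3\epsilon$. The measures $\mu^{(j)}$ for $j\ge m$ are supported in $\T$, so Lemma \ref{Useful lemma prokhorov and partitions} gives, for $j\ge k\ge m$,
\[
d_P(\mu^{(k)},\mu^{(j)})\le 3\epsilon+\sum_{i=1}^L\big|\mu^{(k)}(A_i^\uparrow)-\mu^{(j)}(A_i^\uparrow)\big|.
\]

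\textbf{Step 2: controlling the sum by the martingales.} Conditionally on $\T^{(m)}$, each $M^{(i)}_j=\mu^{(j)}(A_i^\uparrow)$, $j\ge m$, is a bounded martingale by Lemma \ref{Lemma: Martingale property}. It therefore converges a.s. and in $L^1$ to some limit $M^{(i)}_\infty$. Consequently $\sup_{j\ge k}|M^{(i)}_k-M^{(i)}_j|\to0$ a.s. as $k\to\infty$.

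The number $L$ is finite but random. To handle this, first choose $L_0$ with $\pr(L>L_0)<\delta$. On $\{L\le L_0\}$ the sum has at most $L_0$ terms, each tending to $0$ a.s. Hence
\[
\pr\Big(\sup_{j\ge k}\sum_{i=1}^L\big|M^{(i)}_k-M^{(i)}_j\big|>\epsilon\Big)\to0 \qquad (k\to\infty).
\]
Combining this with Step 1 gives $\limsup_k\sup_{j\ge k}\pr\big(d_P(\mu^{(k)},\mu^{(j)})>4\epsilon\big)\le2\delta$. Since $\epsilon$ and $\delta$ are arbitrary, $(\mu^{(j)})$ is Cauchy in probability. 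In fact the argument gives the stronger conclusion that $(\mu^{(j)})$ is a.s. Cauchy along the good event. Completeness of $(\mathcal{P}(\ell^1),d_P)$ then yields a limit $\mu$, supported on the closed set $\T$.

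\textbf{Main obstacle.} The delicate point is the claim $\operatorname{diam}(A_i^\uparrow)\le 3\epsilon$, which needs the projection $\pi_m$ to coincide with the tree-geodesic projection onto $\T^{(m)}$, including at limit points in the closure. This holds for points of $\bigcup_k\T^{(k)}$ by the $\ell^1$ construction: later sticks only open new coordinates beyond $m$, attached along a path through $\T^{(m)}$, and $\ell^1$ distances add along such paths. It then extends to the closure by continuity of $\pi_m$ and of the distance. I would also verify measurability of the $A_i^\uparrow$; a mild issue there is that they are preimages under a continuous map, intersected with $\T$.
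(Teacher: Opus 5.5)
Your proposal is correct and follows essentially the same route as the paper. Both arguments choose $m$ with $d_H(\T^{(m)},\T)\le\epsilon$ with high probability, partition $\T^{(m)}$ into finitely many small pieces, lift them via $A\mapsto A^\uparrow$ to a partition of $\T$ into sets of diameter at most $3\epsilon$, and control $\sum_i|\mu^{(k)}(A_i^\uparrow)-\mu^{(j)}(A_i^\uparrow)|$ through the bounded martingales of Lemma \ref{Lemma: Martingale property} together with Lemma \ref{Useful lemma prokhorov and partitions}. The differences are cosmetic: you use an explicit stick-cutting partition with $L\le C_m/\epsilon+m$ instead of a minimal partition, and a.s.\ martingale convergence instead of a union bound with threshold $\epsilon/b$; your check that $\pi_m$ is the nearest-point projection onto $\T^{(m)}$ also supplies a step the paper only asserts.
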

\begin{proof}
    Fix $\epsilon>0$ and let $K$ be large enough so that $\pr(d_H(\T^{(K)}, \T)>\epsilon ) < \epsilon$, which is possible by Theorem \ref{Theorem: limit continuous tree is 0}. Condition on the event $\T^{(K)}$ and let $J_1, \dots, J_{N_\epsilon}$ be a measurable partition of $\T^{(K)}$ for which $\text{diam}(J_i) < \epsilon$ for all $i$, where $N_\epsilon$ is the smallest number of measurable sets needed in the partition. We set $J_i = \emptyset$ for $i > N_\epsilon$. Conditional on $E = \big\{d_H\big(\T^{(K)}, \T\big)\le\epsilon\big\}$, we have $\text{diam}(J_i^\uparrow) \le 3\epsilon$ and thus the family $\big(J_i^\uparrow\big)_{i \in [N_\epsilon]}$ is a partition of $\T$ of sets of diameter at most $3\epsilon$. It follows from Lemma \ref{Useful lemma prokhorov and partitions} that,
$$d_P\big(\mu^{(j)}, \mu^{(m)}\big) \le \sum_{i=1}^{N_\epsilon} |\mu^{(j)}(J_i^\uparrow) - \mu^{(m)}(J_i^\uparrow)| + 3\epsilon + \mathbbm{1}_{\{E^c\}.}$$
By applying union bounds, we get for all $\delta> 0$ and $b > 0$,
\begin{align*}
&\pr\!\left(d_P(\mu^{(j)}, \mu^{(m)}) \ge \delta N_\epsilon + 3\epsilon \; \middle| \; \T^{(K)} \right),\\
\le \; &\sum_{i=1}^{b} \pr\!\left(|\mu^{(j)}(J_i^\uparrow) - \mu^{(m)}(J_i^\uparrow)|>\delta \; \middle| \; \T^{(K)}\right)\mathbbm{1}_{\{N_\epsilon \le b\}} + \mathbbm{1}_{\{N_\epsilon > b\}} + \pr(E^c \mid \T^{(K)}).
\end{align*}
We set $\delta = \frac{\epsilon}{N_\epsilon}$ and take expectations to obtain that for all $b > 0$,
\begin{align*}
    \pr(&d_P(\mu^{(j)}, \mu^{(m)}) \ge 4\epsilon ) \le \sum_{i=1}^b \pr\!\left(|\mu^{(j)}(J_i^\uparrow) - \mu^{(m)}(J_i^\uparrow)| > \frac{\epsilon}{b}\right) + \pr(N_\epsilon > b) + \epsilon.
\end{align*}
Recall that $N_\epsilon$ is the minimal number of sets $J_i$ of diameter $\epsilon$ needed to partition $\T^{(K)}$. Since $\T^{(K)} \subset \T$ is compact almost surely, we can make $\pr(N_\epsilon > b)$ arbitrarily small with finite $b$. Since $\mu^{(j)}(A^\uparrow)$ is a bounded martingale, we can make $\pr\!\left(|\mu^{(j)}(J_i^\uparrow) - \mu^{(m)}(J_i^\uparrow)| > \frac{\epsilon}{b}\right)$ arbitrarily small by taking~$n,m$ large enough. Hence $\pr(d_P(\mu^{(j)}, \mu^{(m)}) > 4\epsilon) < \epsilon$, for $j,m$ large enough which shows $\mu^{(j)}$ is Cauchy in probability.
\end{proof}

\subsection{Prokhorov approximation of $(\T_n,\nu_n)$ by its first branches -- polynomial case}
\label{Section: convergence measures discrete polynomial case}
\begin{thm}
\label{Theorem Prokhorov convergence}
    For all $\epsilon > 0$, we have
    $$\lim_{k \to \infty} \limsup_{n \to \infty} \pr(d_P(\nu_n^{(k)}, \nu_n) > \epsilon) = 0.$$
\end{thm}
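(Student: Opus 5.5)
We follow the strategy of Theorem~\ref{Theorem: convergence measure continuous uniform}, using a discrete partition together with a martingale (Pólya-urn) argument. The goal is to show that, for fixed large $K$, the masses that $\nu_n^{(j)}$ puts on the ``upward closures'' of small pieces of $\T_n^{(K)}$ hardly change between $j=K$ and $j=N$.

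For $A\subset \T_n^{(K)}$, let $A^\uparrow$ be the set of vertices of $\T_n$ whose path to $\T_n^{(K)}$ ends in $A$. The discrete analogue of Lemma~\ref{Lemma: Martingale property} holds: conditional on the cut times, $M_j:=\nu_n^{(j)}(A^\uparrow)$ is a martingale for $j\ge K$. Indeed, branch $j+1$ attaches at a uniform vertex of $\T_n^{(j)}$, so it lies in $A^\uparrow$ with probability exactly $M_j$. Moreover, each step is a Pólya-type update, whose conditional variance is bounded by $\big(c^n_{j+1}/C^n_{j+1}\big)^2$ with $c^n_{j+1}=C^n_{j+1}-C^n_j$. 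Hence
\[
\E\big[(\nu_n(A^\uparrow)-\nu_n^{(K)}(A^\uparrow))^2 \,\big|\, (C^n_i)\big]\le \sum_{j\ge K}\Big(\frac{c^n_{j+1}}{C^n_{j+1}}\Big)^2 .
\]
The main step is to show that this sum is small in probability uniformly in $n$ once $K$ is large. We handle it in two regimes.
\begin{itemize}
\item For cut times up to $tn^{\beta/(\beta+1)}$, Proposition~\ref{Proposition: Convergence repeat/attachment points polynomial case} shows that the rescaled gaps converge to the PPP gaps. For the PPP with intensity $t^\beta\,dt$ we have $c_j\approx E_j/C_j^\beta$ and $C_j\asymp j^{1/(\beta+1)}$, so $\sum_j (c_j/C_j)^2<\infty$ almost surely. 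Its tail beyond $K$ is therefore small.
\item For cut times beyond $tn^{\beta/(\beta+1)}$, the gaps are stochastically dominated by geometric variables with parameter $(i/n)^\beta$. This gives $\E[(c_{j}^n/C_j^n)^2]\lesssim (n/C_{j}^n)^{2\beta}/(C_{j}^n)^2$. Summing over cut times, with roughly $(i/n)^\beta$ cuts per index $i$, gives a bound of order $\int_{tn^{\beta/(\beta+1)}}^\infty (n/x)^{\beta} x^{-2}\,dx\asymp t^{-(\beta+1)}$, which tends to $0$ as $t\to\infty$, uniformly in $n$.
\end{itemize}
Controlling this second regime uniformly in $n$ is the main obstacle. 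A first-moment computation with the explicit Bernoulli structure should suffice.

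To conclude, fix $\epsilon>0$. Choose $t$ and then $K$ large so that, by Theorem~\ref{Theorem: compactness discrete tree uniform case} and the convergence of cut times, with probability at least $1-\epsilon$ we have $d_H(\T_n^{(K)},\T_n)\le\epsilon n^{\beta/(\beta+1)}$ for all large $n$. By Theorem~\ref{Theorem: finite tree convergence}, $\T_n^{(K)}$ rescaled converges to the compact $\T^{(K)}$. So with high probability it can be partitioned into at most $b$ pieces $J_1,\dots,J_b$ of rescaled diameter below $\epsilon$, with $b$ not depending on $n$; one can take consecutive segments of sticks.

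On the good event, the sets $J_i^\uparrow$ partition $\T_n$ into pieces of rescaled diameter at most $3\epsilon$. Lemma~\ref{Useful lemma prokhorov and partitions} then gives
\[
d_P(\nu_n^{(K)},\nu_n)\le 3\epsilon+\sum_{i\le b}\big|\nu_n(J_i^\uparrow)-\nu_n^{(K)}(J_i^\uparrow)\big|.
\]
By Chebyshev's inequality and the variance bound above, each summand exceeds $\epsilon/b$ with small probability once $K$ is large, uniformly in large $n$. Taking $\limsup_n$ and then $K\to\infty$ completes the proof. The same argument also controls $d_P(\nu_n^{(k)},\nu_n)$ for every $k\ge K$, since $\nu_n^{(k)}(J_i^\uparrow)$ is an intermediate value of the same martingale.
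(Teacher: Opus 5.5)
Your proposal is correct. Its outer layer is the paper's: partition the rescaled $\T_n^{(K)}$ into boundedly many small pieces, take upward closures, use the P\'olya-urn martingale with conditional variance at most $(c^n_{j+1}/C^n_{j+1})^2$, apply Chebyshev, and conclude with Lemma~\ref{Useful lemma prokhorov and partitions}.

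Where you genuinely differ is in showing that $\sum_{j\ge k}(c^n_{j+1}/C^n_{j+1})^2$ is small uniformly in $n$. The paper conditions only on $\T_n^{(k)}$ and uses the same geometric domination of the gaps. It then sums over branch labels $j$, which forces it to control inverse moments of $C_j^n$: a Chernoff bound on the lower tail of $C_j^n$ gives $\E[(C_j^n)^{-2\beta-2}]\le Cj^{-2}n^{-2\beta}$, hence $\E[X_{n,k}]\le C/k$ (Lemma~\ref{lemma: convergence measures discrete case groundwork}).

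You instead sum over vertex indices $i$ beyond $tn^{\beta/(\beta+1)}$. You use that $R_i$ has explicit mean $(i/n)^\beta$ and is independent of the gap that follows $i$. This gives the clean first-moment bound $2\sum_{i>tn^{\beta/(\beta+1)}}(n/i)^{\beta}i^{-2}\lesssim t^{-(\beta+1)}$ without any inverse moments of $C_j^n$. It also transfers almost verbatim to other intensities, such as the logarithmic one. The price is that you need the finite-dimensional limit for the bulk, and you get smallness in probability rather than an explicit $C/k$ rate.

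The bulk in fact needs no PPP summability. For fixed $t$ it is nonempty only on $\{C_k^n\le tn^{\beta/(\beta+1)}\}$, whose probability tends to $\pr(C_k\le t)$, and this vanishes as $k\to\infty$.

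One ordering point: your $b$ is chosen after $K$, but as written ``once $K$ is large'' uses one $K$ in both roles. You should fix $K$ (hence $b$) first, and only then let the martingale's starting index $k\ge K$ tend to infinity. Your last sentence indicates this, and it is what the paper does.
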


\begin{definition}
    For $A \subset \T_n^{(k)}$, let $A^\uparrow$ be the set $A$ together with all vertices $v \in \T_n$ whose path from $v$ to $\T_n^{(k)}$ ends in $A$. (This in general does not agree with the set of descendants of vertices in $A$).
\end{definition}
Then, the following lemma can be proved analogous to Lemma \ref{Lemma: Martingale property}.
\begin{lem}
    Let $A \subset \T_n^{(k)}$. Then $(\nu_n^{(j)}(A^\uparrow))_{j\ge k}$ is a martingale in filtration $(\sigma(\T_n^{(j)}))_{j\ge k}$.
\end{lem}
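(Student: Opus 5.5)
We follow the proof of Lemma \ref{Lemma: Martingale property} step by step, adapting the computation to the discrete setting. Fix $A \subset \T_n^{(k)}$ and $j \ge k$. Boundedness and adaptedness are immediate: $\nu_n^{(j)}(A^\uparrow) \in [0,1]$, and $\nu_n^{(j)}(A^\uparrow)$ is determined by $\T_n^{(j)}$, since membership of a vertex of $\T_n^{(j)}$ in $A^\uparrow$ depends only on its path to $\T_n^{(k)}$, which lies in $\T_n^{(j)}$. It remains to check $\E[\nu_n^{(j+1)}(A^\uparrow)\mid \sigma(\T_n^{(j)})] = \nu_n^{(j)}(A^\uparrow)$. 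For $j \ge N$ the trees and measures are constant by convention, so the identity is trivial there, and we may work on $\{j<N\}$.

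We introduce the intermediate $\sigma$-algebra $G_j = \sigma(\sigma(\T_n^{(j)}), C_{j+1}^n)$. The cut times are independent of the attachment points, and conditional on the cut times the attachment points are independent with $B_j^n \sim \text{Unif}([C_j^n])$. Hence, conditional on $G_j$, the vertex $B_j^n$ to which branch $j+1$ attaches is still uniform on $[C_j^n]$, i.e.\ $\nu_n^{(j)}$-distributed. The whole new branch $\{C_j^n+1,\dots,C_{j+1}^n\}$, of size $c^n_{j+1} = C^n_{j+1}-C^n_j$, lies in $A^\uparrow$ exactly when $B_j^n \in A^\uparrow$, because the path of a new vertex to $\T_n^{(k)}$ passes through $B_j^n$. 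Writing $p = \nu_n^{(j)}(A^\uparrow)$, it follows that $\nu_n^{(j+1)}(A^\uparrow)$ equals $(C_j^n p + c^n_{j+1})/C^n_{j+1}$ with conditional probability $p$, and $C_j^n p / C^n_{j+1}$ otherwise. The same algebra as in Lemma \ref{Lemma: Martingale property} then gives
\[
\E\bigl[\nu_n^{(j+1)}(A^\uparrow)\mid G_j\bigr] = \frac{C_j^n p + p\,c^n_{j+1}}{C^n_{j+1}} = p.
\]
The tower property then yields the martingale identity for $\sigma(\T_n^{(j)})$.

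The only delicate point is justifying that, given $G_j$, the attachment vertex $B_j^n$ is independent of $\T_n^{(j)}$ and uniform on $[C_j^n]$. This needs the construction in Section~2.1: the $R_i$ are independent of the uniform choices made at the cut times. One should also make sure that conditioning on $\T_n^{(j)}$, which reveals $C_1^n,\dots,C_j^n$, gives no information about the choice at time $C_j^n$. This holds because the choice at time $C_j^n$ is only used when building branch $j+1$.
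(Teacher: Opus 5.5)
Your proposal is correct and follows the paper's approach. The paper proves this lemma only by saying it is analogous to Lemma \ref{Lemma: Martingale property}, and you do exactly that: you condition on the enlarged $\sigma$-algebra that also includes the next cut time, use that the uniform attachment vertex puts the whole new branch into $A^\uparrow$ with probability $\nu_n^{(j)}(A^\uparrow)$, do the two-case computation, and finish with the tower property. You also handle $j \ge N$ and the independence of the uniform attachment choices from the $R_i$ explicitly. That independence is the precise statement; your earlier phrasing, that the attachment points are independent of the cut times, is loose.
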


We would like to prove Theorem \ref{Theorem Prokhorov convergence} in a similar manner to the proof of Theorem \ref{prokhorov convergence continuous measure}. In this proof, we used that $\mu^{(j)}(A^\uparrow)$ trivially converges as it is a bounded martingale. To adapt the proof to the current setting, it does not suffice that, for fixed $n$, $\nu_n^{(j)}(A^\uparrow)$ is a bounded martingale. Firstly, we need that the limit is precisely $\nu_n$, and, secondly, we need bounds uniformly in all $n$ large enough. The following lemma states precisely that.
\begin{lem}
\label{lemma: convergence measures discrete case groundwork}
Let $A \subset \T_{n}^{(k)}$. We have,
$$\pr\!\left(|\nu_n^{(k)}(A^\uparrow) - \nu_n(A^\uparrow)| \ge t \; \middle| \; \T_n^{(k)}\right) \le \frac{X_{n,k}}{t^2},$$
where $X_{n,k}$ is a random variable measurable with respect to $\T_n^{(k)}$, not dependent on the choice for $A$, such that, 
$$\limsup_{n \to \infty} \E[X_{n,k}] \le \frac{C}{k},$$
for some positive constant $C$.
\end{lem}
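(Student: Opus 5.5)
We bound the conditional second moment of $\nu_n(A^\uparrow)-\nu_n^{(k)}(A^\uparrow)$ and apply Chebyshev. Write $M_j=\nu_n^{(j)}(A^\uparrow)$ for $j\ge k$. By the discrete analogue of Lemma~\ref{Lemma: Martingale property}, $(M_j)_{j\ge k}$ is a bounded martingale. It terminates at $M_N=\nu_n(A^\uparrow)$, since $\nu_n^{(j)}=\nu_n$ for $j\ge N$. By orthogonality of increments,
\begin{equation*}
\E\big[(M_N-M_k)^2\,\big|\,\T_n^{(k)}\big]=\sum_{j\ge k}\E\big[(M_{j+1}-M_j)^2\,\big|\,\T_n^{(k)}\big].
\end{equation*}
As in the proof of Lemma~\ref{Lemma: Martingale property}, write $c^n_{j+1}=C^n_{j+1}-C^n_j$ for the length of branch $j+1$. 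Conditional on $\T_n^{(j)}$ and on $C^n_{j+1}$, the increment takes only two values:
\begin{itemize}
\item $c^n_{j+1}(1-M_j)/C^n_{j+1}$ with probability $M_j$;
\item $-c^n_{j+1}M_j/C^n_{j+1}$ otherwise.
\end{itemize}
Its conditional variance is therefore $M_j(1-M_j)\,(c^n_{j+1}/C^n_{j+1})^2\le (c^n_{j+1}/C^n_{j+1})^2$. This bound no longer involves $A$. We therefore set
\begin{equation*}
X_{n,k}=\E\Big[\sum_{j\ge k}\Big(\frac{c^n_{j+1}}{C^n_{j+1}}\Big)^2\,\Big|\,\T_n^{(k)}\Big].
\end{equation*}
It is $\T_n^{(k)}$-measurable and independent of $A$, and Chebyshev gives the stated inequality.

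It remains to show $\limsup_n \E[X_{n,k}]=\limsup_n\E\big[\sum_{j\ge k}(c^n_{j+1}/C^n_{j+1})^2\big]\le C/k$. Reorganize the sum by vertex index: branch $j+1$ consists of the vertices $C^n_j+1,\dots,C^n_{j+1}$. Hence
\begin{equation*}
\Big(\frac{c^n_{j+1}}{C^n_{j+1}}\Big)^2\le \sum_{i=C^n_j+1}^{C^n_{j+1}}\frac{2(i-C^n_j)}{i^2}.
\end{equation*}
Let $L_i$ denote the distance from $i$ back to the start of its branch. Then
\begin{equation*}
\E\sum_{j\ge k}\Big(\frac{c^n_{j+1}}{C^n_{j+1}}\Big)^2\le \sum_{i}\frac{2\,\E[L_i\mathbbm 1\{i>C^n_k\}]}{i^2}.
\end{equation*}
Because the $R_i$ are independent, the cut times form a Bernoulli process. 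This gives $\pr(L_i>\ell)\le\prod_{m=i-\ell}^{i-1}(1-(m/n)^\beta)$. The resulting bound on $\E[L_i]$ is of order $(n/i)^{\beta}\wedge i$ up to constants (more precisely $\lesssim n^\beta/i^\beta$ when $i\gtrsim n^{\beta/(\beta+1)}$).

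Rescale $i=xn^{\beta/(\beta+1)}$. The sum then becomes a Riemann sum of order $\int x^{-2}\min(x,x^{-\beta})\,\pr(C_k<x)\,dx$, and on this scale $C^n_k\approx C_k$ by Proposition~\ref{Proposition: Convergence repeat/attachment points polynomial case}. A cleaner route is to pass directly to the continuum via the convergence of the first finitely many cut times, together with a uniform tail bound for indices $i\ge Mn^{\beta/(\beta+1)}$, where $\E[L_i]/i^2\lesssim n^\beta i^{-\beta-2}$ is summable with a small tail. In the continuum limit,
\begin{equation*}
\E\sum_{j\ge k}\Big(\frac{c_{j+1}}{C_{j+1}}\Big)^2\asymp \E\sum_{j\ge k}\frac{1}{(C_{j+1}^{\beta+1})^2}\,\E\big[(C^{\beta+1}_{j+1}c_{j+1}/C_{j+1})^2\big]\approx\sum_{j\ge k}\frac{c}{j^2}\le \frac{C}{k}.
\end{equation*}
This uses $C_j\approx ((\beta+1)j)^{1/(\beta+1)}$ and $c_{j+1}\approx E_{j+1}/C_j^\beta$, hence $c_{j+1}/C_{j+1}\approx E_{j+1}/((\beta+1)j)$.

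The main obstacle is making this last limit rigorous uniformly in $n$. The indices $i$ range up to $n$, whereas Proposition~\ref{Proposition: Convergence repeat/attachment points polynomial case} only controls finitely many cut times. I would handle it by splitting $j<J$ from $j\ge J$:
\begin{itemize}
\item For $j<J$, use distributional convergence together with boundedness of the summands by $1$.
\item For $j\ge J$, use a direct discrete bound $\E[(c^n_{j+1}/C^n_{j+1})^2]\lesssim 1/j^2$. This should follow because $\Lambda_n(C^n_j)=\sum_{i\le C^n_j}(i/n)^\beta$ concentrates around $j$ (a sum of Bernoulli-driven gaps), and $c^n_{j+1}$ is stochastically dominated by a geometric variable with parameter $(C^n_j/n)^\beta$.
\end{itemize}
One must also treat the event $j\ge N$, where the summands vanish.
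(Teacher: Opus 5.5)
Your reduction is correct and is the same as the paper's: the martingale $M_j=\nu^{(j)}_n(A^\uparrow)$, orthogonality of increments, the two-point conditional law of $M_{j+1}-M_j$, the $A$-free bound $(c^n_{j+1}/C^n_{j+1})^2$, and Chebyshev. The gap is in the real content of the lemma, namely $\limsup_n\E[X_{n,k}]\le C/k$, which you never prove.

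\textbf{The branch-wise route is not carried out.} The continuum display is heuristic (``$\asymp$'', ``$\approx$''). The split at $J$ rests on a uniform-in-$n$ bound $\E[(c^n_{j+1}/C^n_{j+1})^2]\lesssim j^{-2}$ that you only say ``should follow'' from concentration of $\Lambda_n(C^n_j)$. Concentration alone is not enough. Your geometric domination gives $\E[(c^n_{j+1})^2\mid\T^{(j)}_n]\le 2(n/C^n_j)^{2\beta}$, and after using $C^n_{j+1}\ge C^n_j$ you must control the negative moment $\E[(C^n_j)^{-2\beta-2}]$. Equivalently, after truncating the summand at $1$, you need the probability that $C^n_j$ falls below a constant fraction of its typical size $j^{1/(\beta+1)}n^{\beta/(\beta+1)}$ to be $O(j^{-2})$. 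A variance bound gives only $O(1/j)$, which is not summable in $j$. The missing ingredient is the explicit lower-tail estimate
\[
\pr(C^n_j\le x)\le\pr\big(\operatorname{Bin}(x,(x/n)^\beta)\ge j\big)\le\Big(\frac{e\,x^{\beta+1}}{j\,n^{\beta}}\Big)^{j}.
\]
Feed this into a layer-cake sum split at $x^{\beta+1}\asymp jn^\beta$. This gives $\E[(C^n_j)^{-2\beta-2}]\lesssim j^{-2}n^{-2\beta}$ for $j\ge 3$, hence $\E[X_{n,k}]\le C/k$ for every $n$. This is exactly the paper's argument. Once it holds for all $j\ge3$, the split at $J$ and the appeal to the Proposition on cut times are superfluous.

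\textbf{The vertex-index route you abandoned is sound.} It would give a genuinely different proof, needing tail information only on the single cut time $C^n_k$ rather than negative moments of every $C^n_j$. Two steps are missing as written.
\begin{enumerate}
\item[(a)] The factorization $\E[L_i\mathbbm{1}\{C^n_k<i\}]\le\E[L_i]\,\pr(C^n_k<i)$. It holds because $\{L_i>\ell\}=\{R_{i-\ell}=\dots=R_{i-1}=0\}$, and on this event $\{C^n_k<i\}$ depends only on $R_1,\dots,R_{i-\ell-1}$.
\item[(b)] A bound on $\pr(C^n_k<i)$ that is uniform in $n$. Replacing it by the continuum $\pr(C_k<x)$ via distributional convergence does not control the range $i\ll n^{\beta/(\beta+1)}$. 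There $\E[L_i]/i^2\asymp 1/i$, and the unweighted sum is of order $\log n$.
\end{enumerate}
For (b), the same Chernoff bound gives $\pr(C^n_k<xn^{\beta/(\beta+1)})\le\min\{1,(ex^{\beta+1}/k)^k\}$. Combined with $\E[L_i]\lesssim\min(i,(n/i)^\beta)$, the rescaled sum is then dominated uniformly in $n$ by $\int_0^\infty x^{-2}\min(x,x^{-\beta})\min\{1,(ex^{\beta+1}/k)^k\}\,dx=O(1/k)$, which closes the argument.
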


We first assume the above lemma holds, and use this to prove Theorem \ref{Theorem Prokhorov convergence}.

\begin{proof}[Proof of Theorem \ref{Theorem Prokhorov convergence}]
Fix $\epsilon > 0$ and $K$ large enough so $\pr(d_H(\T_n^{(K)}, \T_n)>\epsilon n^\frac{\beta}{\beta+1}) < \epsilon$ for all $n$ large enough, which is possible by Theorem \ref{Theorem: compactness discrete tree uniform case}. Write $N_\epsilon$ for the size of the smallest measurable partition $J_1, \dots, J_{N_\epsilon}$ of $n^{-\frac{\beta}{\beta+1}}\T_n^{(K)}$ where all sets have diameter at most $\epsilon$, taking $J_i=\emptyset$ for $i>N_\epsilon$. 
By using the exact same reasoning as in the proof of Theorem \ref{Theorem: convergence measure continuous uniform}, we obtain that for all $b > 0$ and $k \ge K$,
\begin{align*}
    \pr\big(d_P(\nu_n, \nu_n^{(k)}) \ge 4\epsilon \; \big| \; \T_n^{(k)}\big) &\le \sum_{j=1}^b \pr\!\left(|\nu_n^{(k)}(J_j^\uparrow) - \nu_n(J_j^\uparrow)| \ge \frac{\epsilon}{b} \; \middle| \; \T_n^{(k)}\right),\\
    &+ \mathbbm{1}_{\{{N_\epsilon} >b\}} + \pr(E_n^c \mid \T_n^{(k)}),
\end{align*}
where $E_n = \big\{d_H\big(\T_n^{(K)}, \T_n\big) \le \epsilon n^\frac{\beta}{\beta+1}\big\}$. By applying Lemma \ref{lemma: convergence measures discrete case groundwork}, followed by taking expectations and limits, we obtain,
\begin{align*}
    \limsup_{n \to \infty} \pr\!\left(d_P(\nu_n, \nu_n^{(k)}) \ge 4\epsilon \right) &\le \frac{Cb^3}{\epsilon^2 k} + \limsup_{n \to \infty}\pr(N_\epsilon > b) + \limsup_{n \to \infty}\pr(E_n^c).
\end{align*}
By Theorem \ref{Theorem: finite tree convergence} we can make $\pr(N_\epsilon > b)$ arbitrarily small for all $n$ large enough by picking $b$ large enough as $n^{-\frac{\beta}{\beta+1}}\T_n^{(K)}$ converges to $\T^{(K)}$ which is compact a.s. By assumption, $\pr(E_n^c) < \epsilon$ and the first term can be made arbitrarily small by taking $k$ large.
\end{proof}

It remains to prove Lemma \ref{lemma: convergence measures discrete case groundwork}. 

\begin{proof}[Proof of Lemma \ref{lemma: convergence measures discrete case groundwork}]
    Recall that $N$ denotes the number of branches of $\T_n$ and $C_k^n = n$ for $k 
\ge N$.  We first show that $X_{n,k} = \sum_{j=k}^{n-1} \E\!\left[\frac{2n^{2\beta}}{(C_j^n)^{2\beta + 2}} \; \middle| \; \T_n^{(k)}\right]$ satisfies
    $$\pr\!\left((\nu_n^{(k)}(A^\uparrow) - \nu_n(A^\uparrow))^2 \ge t^2 \; \middle| \; \T_n^{(k)}\right) \le \frac{X_{n,k}}{t^2}$$
    for all measurable $A$. To this end, apply Markov's inequality to obtain,
    $$\pr\!\left((\nu_n^{(k)}(A^\uparrow) - \nu_n(A^\uparrow))^2 \ge t^2 \; \middle| \; \T_n^{(k)}\right) \le \frac{1}{t^2}\E\!\left[(\nu_n^{(k)}(A^\uparrow) - \nu_n(A^\uparrow))^2 \; \middle| \; \T_n^{(k)}\right].$$
    Recall that we have $\nu_n^{(N)} = \nu_n$ and by convention $\nu_n^{(k)} = \nu_n$ for $k \ge n$. Since $N \le n$ holds deterministically, we have,
    \begin{align*}
        \E\!\left[(\nu_n^{(k)}(A^\uparrow) - \nu_n(A^\uparrow))^2 \; \middle| \; \T_n^{(k)}\right] &= \E\!\left[\nu_n^{(k)}(A^\uparrow)^2 + \nu_n^{(n)}(A^\uparrow)^2 - 2\nu_n^{(k)}(A^\uparrow)\nu_n^{(n)}(A^\uparrow)\; \middle| \; \T_n^{(k)}\right],\\
        &=\E\!\left[\nu_n^{(n)}(A^\uparrow)^2 - \nu_n^{(k)}(A^\uparrow)^2\;\middle|\; \T_n^{(k)}\right],\\
        &= \sum_{j=k}^{n-1}\E\!\left[\nu_{n}^{(j+1)}(A^\uparrow)^2 - \nu_n^{(j)}(A^\uparrow)^2 \; \middle| \; \T_n^{(k)}\right],
    \end{align*}
     Write $c_j^n = C_j^n - C_{j-1}^n$ for the number of vertices on branch $j$. We obtain,
    \begin{align*}
    &\E\!\left[\nu_n^{(j+1)}(A^\uparrow)^2 - \nu_n^{(j)}(A^\uparrow)^2 \; \middle|\; \T_n^{(k)}\right]\\
    =\;&\E\!\left[\nu_n^{(j)}(A^\uparrow) \left(\frac{C^n_j \nu_n^{(j)}(A^\uparrow) + c^n_{j+1}}{C^n_{j+1}}\right)^2 \hspace{-3pt} + (1 - \nu_n^{(j)}(A^\uparrow)) \left(\frac{C^n_j \nu_n^{(j)}(A^\uparrow)}{C^n_{j+1}}\right)^2 - \nu_n^{(j)}(A^\uparrow)^2 \; \middle| \; \T_n^{(k)} \right]\\
    =\;&\E\!\left[\frac{(c^n_{j+1})^2\nu_n^{(j)}(A^\uparrow)(1-\nu_n^{(j)}(A^\uparrow))}{(C^n_{j+1})^2} \; \middle| \; \T_n^{(k)} \right]\\
    \le\;&\E\!\left[\frac{(c_{j+1}^n)^2}{(C_{j}^n)^2} \; \middle| \; \T_n^{(k)}\right],    
\end{align*}
Observe that $c_{j+1}^n > x$ given $\T_n^{(j)}$ happens precisely when all of $R_{C_j^n + 1}, \dots, R_{C_j^n + x}$ equal zero, which happens with probability,
    $$\pr\big(c_{j+1}^n > x \; \big| \; \T^{(j)}_n\big) = \prod_{i=1}^x \left(1 - \left(\frac{C_j^n + i}{n}\right)^\beta\right) \le \left(1 - \left(\frac{C_j^n}{n}\right)^\beta\right)^x.$$
    Thus, conditional on $\T^{(j)}_n$, it holds that $c_{j+1}^n $ is dominated by $Y \sim \text{Geom} \hspace{-1pt} \Big(\left(\frac{C_j^n}{n}\right)^\beta\Big)$, which gives the bound,
    $$\E\!\left[\big(c^n_{j+1}\big)^2 \; \middle| \; \T_n^{(j)}\right] \le \E[Y^2] \le \frac{2n^{2\beta}}{(C_j^n)^{2\beta}}.$$
    By using the tower property of expectation, we obtain,
    \begin{align*}
        \pr\!\left((\nu_n^{(k)}(A^\uparrow) - \nu_n(A^\uparrow))^2 \ge t^2 \; \middle| \; \T_n^{(k)}\right)&\le\frac{1}{t^2} \sum_{j=k}^{n-1}\E\!\left[\frac{(c^n_{j+1})^2}{(C_{j}^n)^2} \; \middle| \; \T_n^{(k)}\right],\\
        &\le \frac{1}{t^2}\sum_{j=k}^{n-1} \E\!\left[\frac{2n^{2\beta}}{(C_j^n)^{2\beta + 2}} \; \middle| \; \T_n^{(k)}\right] = \frac{X_{n,k}}{t^2}.
    \end{align*}
    It only remains to show $\limsup_{n \to \infty} \E[X_{n,k}] \le \frac{C}{k}$ for some constant $C$. For this, observe that,
    \begin{equation}\label{Equation: bound C_j by binomial polynomial case}\pr(C_j^n < x ) \le \pr(X \ge j), \text{ where } X \sim \text{Binom}\!\left(x, \left(\frac{x}{n}\right)^\beta\right).\end{equation}
    Indeed, $C_j^n < x$ can only happen if at least $j$ of the Bernoulli variables $R_1, \dots, R_x$ produce a $1$. The bound follows since $\pr(R_i = 1) \le \left(\frac{x}{n}\right)^\beta$ for all $i \le x$. Via a standard Chernoff bound on the tail of the binomially distributed $X$, we obtain,
    \begin{equation}\label{Equation: Chernoff bound polynomial case}\pr\big(C_j^n \le x\big) \le \pr(X \ge j) \le \left(\frac{e\E[X] }{j}\right)^j = \left(\frac{ex^{\beta+1}}{jn^{\beta}}\right)^j.\end{equation}
    Using this, we compute,
    \begin{align*}
        \E\!\left[\frac{1}{(C_j^n)^{2\beta + 2}}\right] &\le \sum_{k=1}^\infty \left(\frac{1}{k^{2\beta+2}} - \frac{1}{(k+1)^{2\beta + 2}}\right)\pr(C_j^n \le k),\\
        &\le C\sum_{k=1}^{M} k^{-2\beta - 3} \left(\frac{ek^{\beta+1}}{jn^{\beta}}\right)^j  + \sum_{k = M+1}^\infty\left(\frac{1}{k^{2\beta+2}} - \frac{1}{(k+1)^{2\beta + 2}}\right),\\
        &\le C\left(\frac{e}{jn^\beta}\right)^j \sum_{k=1}^M k^{j(\beta+1) - 2\beta - 3} + M^{-2\beta - 2},\\
        &\le C\left(\frac{e}{jn^\beta}\right)^j \big(M^{\beta+1}\big)^{(j-2)} + \big(M^{(\beta+1)}\big)^{-2}
    \end{align*}
    where the above computation holds for $j \ge 3$ and $C$ is some positive constant. By choosing $M^{\beta+1} = n^\beta j e^{-1}$, we obtain,
    \begin{equation} \label{Equation: bound on expectation of C_j^n}\E\!\left[\frac{1}{(C_j^n)^{2\beta + 2}}\right] \le C j^{-2}n^{-2\beta},\end{equation}
    By putting everything together, we obtain,
    $$\limsup_{n \to \infty}\E\!\left[\sum_{j=k}^N \E\!\left[\frac{2n^{2\beta}}{(C_j^n)^{2\beta + 2}} \; \middle| \; \T_n^{(k)}\right]\right] \le \limsup_{n \to \infty} \sum_{j=k}^n \E\!\left[\frac{2n^{2\beta}}{(C_j^n)^{2\beta + 2}}\right] \le C\sum_{j=k}^\infty \frac{1}{j^2} \le \frac{C}{k},$$
    which finishes the proof. 
\end{proof}

We end this section with images of the tree $\T_n$ for $n = 800$ for various values of $\beta$.
\begin{figure}[h!]
    \centering
    \begin{subfigure}[b]{0.325\textwidth}
        \centering
        \includegraphics[width=\textwidth]{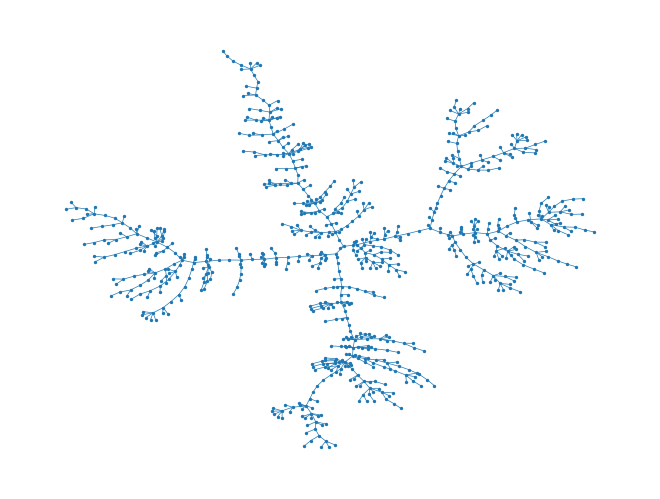}
        \caption*{$\beta = 0.8$}
        \label{fig:sub1}
    \end{subfigure}
    \begin{subfigure}[b]{0.325\textwidth}
        \centering
        \includegraphics[width=\textwidth]{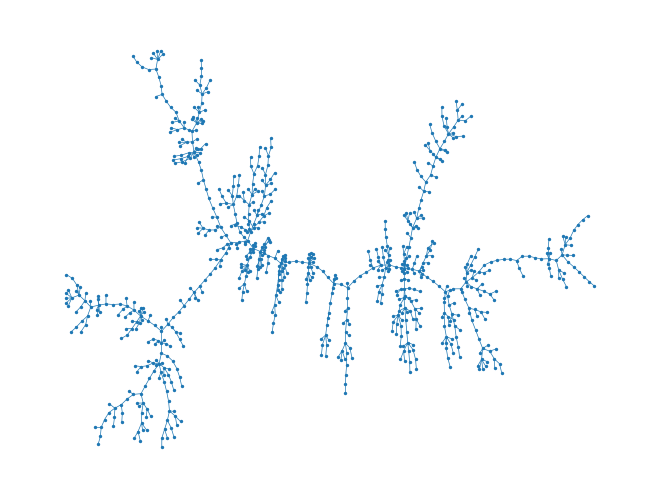}
        \caption*{$\beta = 1$}
        \label{fig:sub2}
    \end{subfigure}
    \begin{subfigure}[b]{0.325\textwidth}
        \centering
        \includegraphics[width=\textwidth]{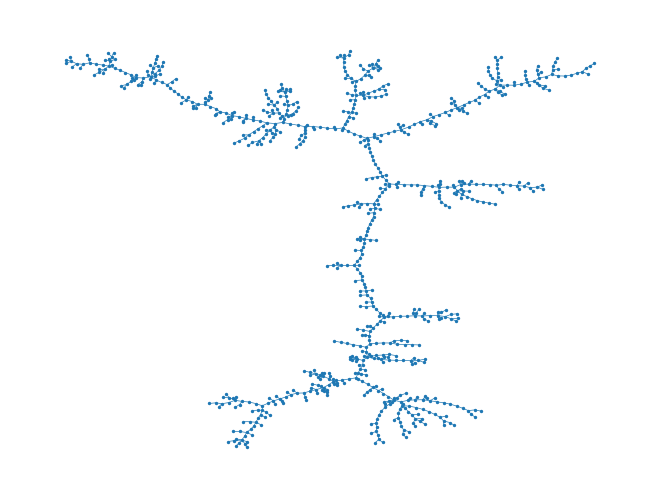}
        \caption*{$\beta = 1.5$}
        \label{fig:sub3}
    \end{subfigure}
    
    \vspace{0.5em} 
    
    \begin{subfigure}[b]{0.325\textwidth}
        \centering
        \includegraphics[width=\textwidth]{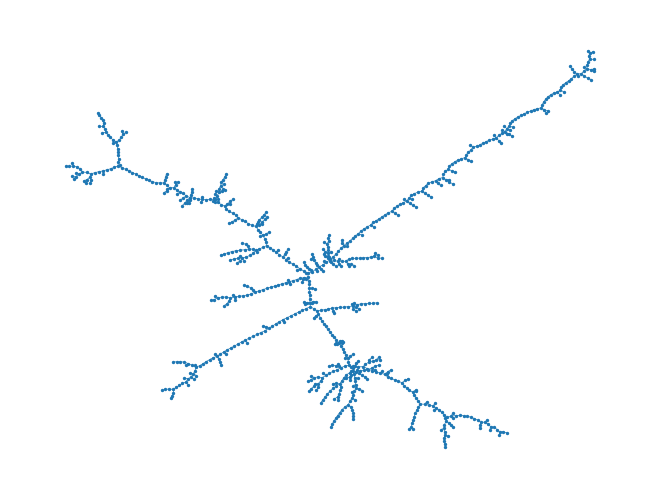}
        \caption*{$\beta = 2.5$}
        \label{fig:sub4}
    \end{subfigure}
    \begin{subfigure}[b]{0.325\textwidth}
        \centering
        \includegraphics[width=\textwidth]{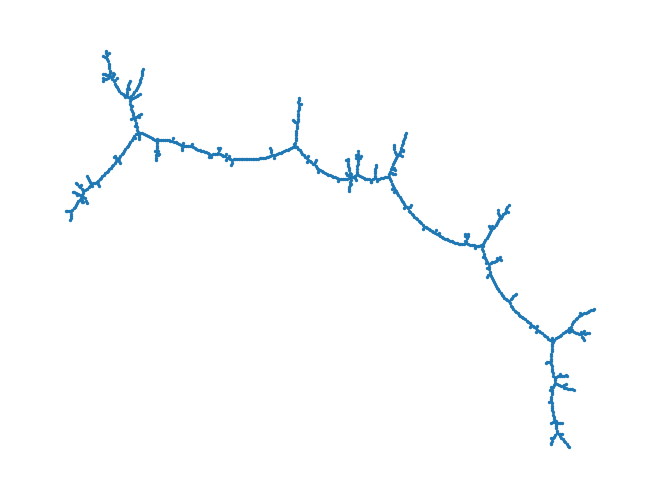}
        \caption*{$\beta = 5$}
        \label{fig:sub5}
    \end{subfigure}
    \begin{subfigure}[b]{0.325\textwidth}
        \centering
        \includegraphics[width=\textwidth]{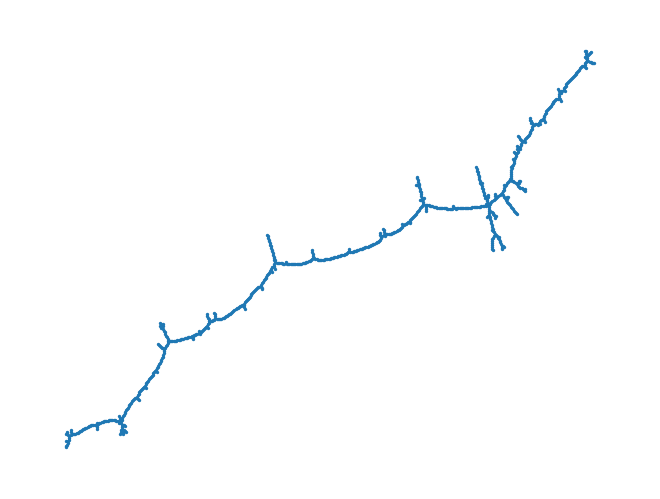}
        \caption*{$\beta = 8$}
        \label{fig:sub6}
    \end{subfigure}
    \caption{Trees $\T_n$ for various choices of $\beta$} 
    \label{fig:2x2grid}
\end{figure}

\section{The Logarithmic Case}
\label{Section: the logarithmic case}
Let $g(n,\cdot) : \{1, \dots, n-1\} \to \R_{\ge 0}$ be given by $g(n,i) = \ln^\gamma(in^{-\frac{1}{2}} + 1)n^{-\frac{1}{2}}$ and observe that $g(n,i) \in [0,1]$ for all $i \in [n-1]$ for all sufficiently large $n$. In this section, we study the discrete $g(n,i)$ aggregation tree. That is, let $R_i \sim \text{Ber}(g(n,i))$ be independent of each other, then $\T_n^g$ is constructed iteratively, by gluing vertex $i+1$ to vertex $i$ if $R_i = 0$ and gluing vertex $i+1$ to vertex with a uniform label in $[i]$ if $R_i = 1$. As before, $C_1^n < \dots < C_{N-1}^{n}$ are the indices $i$ for which $R_i = 1$, which are the cut points of the branches used to construct $\T_n$ and conditional on the cut points $B_i^n \sim \text{Unif}([C_i^n])$ independently denote the attachment points. The definitions of partial trees $\T_n(a), \T_n^{(j)}$ and measures $ \nu_n, \nu_n^{(j)}$ are as before, but with respect to $\T_n^g$ rather than $\T_n^f$. Lastly, throughout this section, we write $\T_n$ instead of $\T_n^g$ for ease of notation. 

On the continuous side, we let $\eta$ denote a PPP of intensity $\ln^\gamma(t+1) dt$ on $\R_{\ge 0}$ and order its points as $0 < C_1 < C_2 < \dots$. Conditional on $\eta$, let $B_1, B_2, \dots$ be independent with $B_i \sim \text{Unif}([0,C_i])$. Let $\T_\gamma$ be the tree obtained from the stick-breaking construction applied to points $C_i, B_i$. Lastly, let the definitions $\T^{(k)}, \mu^{(k)}$ and $\mu$ be as before, but applied to $\T_\gamma$ instead of $\T_\beta$ and simply write $\T$ for $\T_\gamma$.

\subsection{Convergence of the first branches -- logarithmic case}
The current section is dedicated to showing the following theorem.
\label{Section: convergence first branches logarithmic case}
\begin{thm}
\label{Theorem: finite dimensional convergence logarithm case}
    For all $k \in \N$ we have convergence in distribution in the GHP topology of,
    $$\big(\T_n^{(k)}, n^{-\frac{1}{2}}d_n, \nu_n^{(k)}\big) \xrightarrow[n \to \infty]{d} \big(\T^{(k)}, d, \mu^{(k)}\big).$$
\end{thm}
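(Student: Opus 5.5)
The plan is to mirror the polynomial case exactly. The only model-specific input there was Proposition~\ref{Proposition: Convergence repeat/attachment points polynomial case}; the GHP argument of Section~\ref{Section: GH convergence polynomial case} used nothing about $f$ beyond almost sure convergence of the rescaled cut and attachment points. So I would first prove the analogue
$$n^{-\frac12}\big(C_1^n,\dots,C_k^n,B_1^n,\dots,B_k^n\big)\xrightarrow[n\to\infty]{d}\big(C_1,\dots,C_k,B_1,\dots,B_k\big),$$
where the $C_i$ are the first $k$ points of a PPP of intensity $\ln^\gamma(t+1)dt$. Then I would pass to a Skorokhod coupling in which this holds almost surely with $B_i\neq C_j$, and rerun Definition~\ref{definition: correspondence for convergence tree}, Lemma~\ref{relation (given almost sure event) has discrepency 0} and the coupling measure $\pi_n$ verbatim, with $n^{-\beta/(\beta+1)}$ replaced by $n^{-1/2}$.

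For the cut points, I would follow Lemma~\ref{Lemma: convergence to pdf first points PPP}. I would show that, uniformly on compact subsets of $\{0<s_1<\dots<s_k\}$,
$$n^{\frac k2}\,\pr\big(C_1^n=\floor{s_1n^{1/2}},\dots,C_k^n=\floor{s_kn^{1/2}}\big)\to \prod_{i=1}^k\ln^\gamma(s_i+1)\,\exp\Big(-\int_0^{s_k}\ln^\gamma(t+1)\,dt\Big).$$
The right-hand side is the joint density of the first $k$ PPP points. The computation is the same as before:
\begin{itemize}
\item The prefactor is $n^{1/2}g(n,\floor{s_in^{1/2}})=\ln^\gamma(\floor{s_in^{1/2}}n^{-1/2}+1)\to\ln^\gamma(s_i+1)$, uniformly by continuity.
\item The factors $(1-g)^{-1}$ tend to $1$, since $g=O(n^{-1/2})$ on this range.
\item For the product, write $\sum_{i\le s_kn^{1/2}}\ln(1-g(n,i))=-\sum_i g(n,i)+O\big(\sum_i g(n,i)^2\big)$. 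The first sum is a Riemann sum with mesh $n^{-1/2}$ for $\int_0^{s_k}\ln^\gamma(t+1)dt$. The error term is $O(s_kn^{1/2}\cdot n^{-1})\to0$.
\end{itemize}
Lemma~\ref{uniform on compact sets and integrals} then converts the local limit into convergence of the distribution functions, exactly as in Lemma~\ref{convergence of repeat probabilities}.

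For the attachment points, the argument of Lemma~\ref{Lemma: attachment points are uniform} applies unchanged once I know that $C_1^n\to\infty$ in probability conditional on $\mathcal C_k^n(s)$. Here $\pr(C_1^n\le M)\le\sum_{i\le M}g(n,i)=O(M\ln^\gamma(Mn^{-1/2}+1)n^{-1/2})\to0$, while $\pr(\mathcal C_k^n(s))$ stays bounded away from $0$ by the previous step. Hence $\E\big[\prod_i\floor{t_iC_i^n}/C_i^n\mid\mathcal C_k^n(s)\big]\to t_1\cdots t_k$. This gives the joint convergence, and the GHP step is then identical. The only place scaling enters is the bound $\frac{1}{C_k}\frac{C_i-C_{i-1}}{C_i^n-C_{i-1}^n}=\frac1{C_k^n}+o(n^{-1/2})$, which holds because $C_k^n=O(n^{1/2})$ in the coupling.

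I expect no real obstacle. The one point needing care is uniformity of the Riemann-sum approximation near $t=0$, where $\ln^\gamma(t+1)$ behaves like $t^\gamma$. This is harmless since the function is continuous and increasing on compacts, so the Riemann error is at most $n^{-1/2}\ln^\gamma(s_k+1)$.
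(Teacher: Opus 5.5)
Your proposal is correct and follows the paper's proof essentially step for step. Like you, the paper first establishes $n^{-1/2}(C_1^n,\dots,C_k^n,B_1^n,\dots,B_k^n)\to(C_1,\dots,C_k,B_1,\dots,B_k)$ via the local limit for the cut points (the same $\ln(1-x)=-x+O(x^2)$ and Riemann-sum computation), Lemma~\ref{uniform on compact sets and integrals} and the conditional uniformity of the attachment points, and then reruns the correspondence and coupling-measure argument of Section~\ref{Section: GH convergence polynomial case} with $n^{-\beta/(\beta+1)}$ replaced by $n^{-1/2}$; the one small imprecision is that the estimate $\frac{1}{C_k}\frac{C_i-C_{i-1}}{C_i^n-C_{i-1}^n}=\frac{1}{C_k^n}+o(n^{-1/2})$ uses that $C_k^n$ and the gaps $C_i^n-C_{i-1}^n$ are of order exactly $n^{1/2}$ (since $0<C_1<\dots<C_k$), while $C_k^n=O(n^{1/2})$ is what you need afterwards to sum these errors over the at most $C_k^n$ vertices of $A$.
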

To prove the above result, we first show convergence of the discrete cut points and attachment points $C_i^n$ and $B_i^n$ to $C_i$ and $B_i$. The reasoning largely follows the proof of Theorem \ref{Proposition: Convergence repeat/attachment points polynomial case}, we highlight the differences. 
\begin{thm}
\label{lemma convergence repeat and attach points PPP log(t+1)beta}
    With notation as above, we have that for all $k \in \N$ and all $\gamma > 0,$
    $$n^{-\frac{1}{2}} \big(C_1^n, \dots, C_k^n, B_1^n, \dots, B_k^n\big) \xrightarrow[n \to \infty]{d} \big(C_1, \dots, C_k, B_1, \dots, B_k\big).$$
\end{thm}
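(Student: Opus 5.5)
We mirror the proof of Proposition~\ref{Proposition: Convergence repeat/attachment points polynomial case}, replacing the scale $n^{\frac{\beta}{\beta+1}}$ by $n^{\frac12}$ and the density $t^\beta$ by $\ln^\gamma(t+1)$. As there, it suffices to prove two things for $0<s_1<\dots<s_k$ and $t_1,\dots,t_k\in[0,1)$. First, the joint distribution functions $\pr(C_1^n\le s_1n^{\frac12},\dots,C_k^n\le s_kn^{\frac12})$ converge to $\pr(C_1\le s_1,\dots,C_k\le s_k)$. Second, the conditional probabilities of $\{B_i^n\le t_iC_i^n\}$ converge to $t_1\cdots t_k$.

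\textbf{Step 1: local limit.} The key step is the analogue of Lemma~\ref{Lemma: convergence to pdf first points PPP}:
$$n^{\frac k2}\,\pr\big(C_i^n=\lfloor s_in^{\frac12}\rfloor,\ i\le k\big)\xrightarrow[n\to\infty]{\text{u.c.}}\prod_{i=1}^k\ln^\gamma(s_i+1)\,e^{-\Lambda(s_k)},\qquad \Lambda(s)=\int_0^s\ln^\gamma(t+1)\,dt,$$
where the right-hand side is the joint density of the first $k$ points of the PPP.

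Write the probability exactly as in \eqref{Equation: computations cut points converge to pdf PPP points}, with $g$ in place of $f$.

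\emph{Prefactors.} We have $n^{\frac12}g(n,\lfloor s_in^{\frac12}\rfloor)=\ln^\gamma(\lfloor s_in^{\frac12}\rfloor n^{-\frac12}+1)$. This converges to $\ln^\gamma(s_i+1)$ uniformly on compacts of $(0,\infty)$, by uniform continuity of $\ln^\gamma(\cdot+1)$ together with $|\lfloor sn^{\frac12}\rfloor n^{-\frac12}-s|\le n^{-\frac12}$. The correction factors $(1-g)^{-1}$ tend to $1$ uniformly, since $g(n,i)=O(n^{-\frac12}\ln^\gamma(s_k+1))$ on the relevant range.

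\emph{Product term.} Taking logarithms and using $\ln(1-x)=-x+O(x^2)$ gives
$$\sum_{i\le s_kn^{\frac12}}\ln(1-g(n,i))=-\sum_{i\le s_kn^{\frac12}}n^{-\frac12}\ln^\gamma\big(in^{-\frac12}+1\big)+O\big(s_kn^{\frac12}\cdot n^{-1}\big).$$
The main sum is a Riemann sum with mesh $n^{-\frac12}$ of a continuous increasing function on $[0,s_k]$. Monotonicity sandwiches it between integrals, so it converges to $\Lambda(s_k)$ uniformly for $s_k$ in compacts.

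\textbf{Step 2: distribution functions.} With Step~1 in hand, the analogue of Lemma~\ref{convergence of repeat probabilities} follows verbatim. Rewrite the sum over $x_1<\dots<x_k$ as an integral of a step function after the substitution $x_i=n^{\frac12}y_i$. Then apply Lemma~\ref{uniform on compact sets and integrals}: the integration domains converge in symmetric difference, and the integrands converge uniformly on compacts. The one point to check is near $y_1=0$, where uniformity on compacts of $(0,\infty)$ is not available. There the integrand is bounded by $O(1)$ uniformly in $n$, because $\ln^\gamma(y+1)\le \ln^\gamma(s_k+1)$. So the contribution of $y_1\in[0,\delta]$ is $O(\delta)$ uniformly in $n$, and we let $\delta\to0$. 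The same remark applies to the polynomial case.

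\textbf{Step 3: attachment points.} Following Lemma~\ref{Lemma: attachment points are uniform}, the conditional probability equals $\E[\prod_i\lfloor t_iC_i^n\rfloor/C_i^n\mid\mathcal C_k^n(s)]=t_1\cdots t_k+O(\E[1/C_1^n\mid\mathcal C_k^n(s)])$. So it suffices that $C_1^n\to\infty$ in probability conditionally. For fixed $M$,
$$\pr(C_1^n\le M)\le\sum_{i\le M}g(n,i)\le M n^{-\frac12}\ln^\gamma\big(Mn^{-\frac12}+1\big)\to0,$$
while $\pr(\mathcal C_k^n(s))$ stays bounded away from $0$ by Step~2.

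Combining Steps~2 and 3 gives the claimed convergence for every $\gamma>0$. The main obstacle is the uniform Riemann-sum estimate in Step~1, together with the boundary behaviour at $0$ in Step~2; both are handled by monotonicity and continuity of $t\mapsto\ln^\gamma(t+1)$.
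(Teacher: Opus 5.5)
Your proposal is correct and follows the paper's route essentially step for step: a local limit for the cut times via the Riemann-sum argument, passage to the joint distribution functions by the integral-convergence lemma after the substitution $x_i=n^{\frac12}y_i$, and the attachment points handled through $C_1^n\to\infty$ in conditional probability. Your extra care near $y_1=0$ is harmless but not actually needed: on the integration domain, where the floors are distinct and at least $1$, the local convergence is already uniform on all of $[0,S]^k$, because $\ln^\gamma(\cdot+1)$ is uniformly continuous on $[0,S+1]$, including at $0$.
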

We proceed via three Lemmas. The first Lemma is the corresponding version of Lemma \ref{Lemma: convergence to pdf first points PPP}.
\begin{lem}
\label{Lemma pdf of PPP log(t+1) converges}
    Fix $k \in \N$ and $0 < x_1 < \dots < x_k$. We have,
    $$n^{\frac{k}{2}}\pr\!\left(C_1^n = \floor{x_1n^\frac{1}{2}}, \dots, C_k^n = \floor{x_kn^\frac{1}{2}}\right) \xrightarrow[n \to \infty]{\text{u.c.}} f_{C_1, \dots, C_k}(x_1, \dots, x_k),$$
    where $f_{C_1, \dots, C_k}(x_1, \dots, x_k)$ is the pdf of the first $k$ points of a PPP of intensity $\ln^\gamma(t+1) dt$ and we recall that u.c. denotes that the convergence is uniform over compact sets. 
\end{lem}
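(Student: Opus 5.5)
The plan is to mirror the proof of Lemma~\ref{Lemma: convergence to pdf first points PPP}. First I write down the target density: for a PPP of intensity $\ln^\gamma(t+1)\,dt$,
$$f_{C_1,\dots,C_k}(x_1,\dots,x_k)=\prod_{i=1}^k \ln^\gamma(x_i+1)\,\exp\!\Big(-\int_0^{x_k}\ln^\gamma(t+1)\,dt\Big).$$
Write $m_i=\floor{x_in^{1/2}}$. The event $\{C_1^n=m_1,\dots,C_k^n=m_k\}$ says exactly that $R_{m_i}=1$ for each $i$ and $R_j=0$ for the other $j\le m_k$. By independence,
$$n^{\frac k2}\pr(\cdot)=\prod_{i=1}^k \frac{n^{1/2}g(n,m_i)}{1-g(n,m_i)}\prod_{j=1}^{m_k}(1-g(n,j)).$$

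\textbf{Prefactor.} Since $n^{1/2}g(n,m_i)=\ln^\gamma(m_in^{-1/2}+1)$ and $m_in^{-1/2}\to x_i$ uniformly on compacts (the error is at most $n^{-1/2}$), this factor converges uniformly to $\ln^\gamma(x_i+1)$ by uniform continuity of $\ln^\gamma(\cdot+1)$ on compacts. The denominators $1-g(n,m_i)$ tend to $1$ uniformly, because $g(n,m_i)\le n^{-1/2}\ln^\gamma(x_k+2)$.

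\textbf{Product.} Take logarithms and use $\ln(1-y)=-y+O(y^2)$, uniformly for $y$ in a neighbourhood of $0$; this applies since $\sup_{j\le m_k}g(n,j)=O(n^{-1/2})$ uniformly for $x_k$ in a compact set. The error term is then $O\big(m_k\cdot n^{-1}\big)=O(n^{-1/2})$. The main term is
$$-\sum_{j=1}^{m_k}n^{-1/2}\ln^\gamma(jn^{-1/2}+1),$$
a Riemann sum with mesh $n^{-1/2}$ for the monotone continuous function $t\mapsto\ln^\gamma(t+1)$ on $[0,m_kn^{-1/2}]$. For an increasing integrand, the sum is sandwiched between $\int_0^{m_kn^{-1/2}}$ and $\int_{n^{-1/2}}^{(m_k+1)n^{-1/2}}$ of that function. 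Both bounds differ from $\int_0^{x_k}\ln^\gamma(t+1)\,dt$ by $O(n^{-1/2}\ln^\gamma(x_k+2))$, uniformly on compacts. Exponentiating, using that $\exp$ is uniformly continuous on bounded sets, gives uniform convergence of the product to $\exp(-\int_0^{x_k}\ln^\gamma(t+1)\,dt)$.

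\textbf{Combining and the obstacle.} Multiplying finitely many uniformly convergent, uniformly bounded factors yields the claim. The only step needing care is that every error bound is uniform over compact sets of $(x_1,\dots,x_k)$. Since all the $x_i$ lie in some $[0,L]$, all bounds can be written in terms of $L$ alone. This is routine and easier than the polynomial case, because $g$ is of order $n^{-1/2}$ throughout.
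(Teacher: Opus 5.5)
Your proposal is correct and follows the paper's proof essentially step by step: you use the same product formula for the event, show the same convergence of the prefactor to $\prod_i \ln^\gamma(x_i+1)$, and apply the same logarithm/Taylor expansion to reduce the remaining product to a Riemann sum for $\int_0^{x_k}\ln^\gamma(t+1)\,dt$. Your monotone sandwich for that Riemann sum simply makes explicit the uniform convergence that the paper attributes to continuity of $\ln^\gamma(t+1)$ on $[0,x_k]$.
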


\begin{proof}
    Firstly, we have $f_{C_1, \dots, C_k}(x_1, \dots, x_k) = \prod_{i=1}^k\ln^\gamma(x_i + 1)\exp\left(-\int_0^{x_k}\ln^\gamma(t+1) dt\right)$.
    Observe that $\big\{C_1^n = \floor{x_1n^\frac{1}{2}}, \dots, C_k^n = \floor{x_kn^\frac{1}{2}}\big\}$ happens precisely when the random variables $R_i \sim \text{Ber}(g(n,i))$ satisfy
    $$R_i = \begin{cases}
        1 &\text{ if } i \in \big\{\floor{x_1n^\frac{1}{2}}, \dots, \floor{x_kn^\frac{1}{2}}\big\}\\
        0 &\text{ if } i \in [\floor{x_k n^\frac{1}{2}}] \setminus \big\{\floor{x_1n^\frac{1}{2}}, \dots, \floor{x_kn^\frac{1}{2}}\big\}.
    \end{cases}$$
    Using precisely the same reasoning as in Equation \ref{Equation: computations cut points converge to pdf PPP points}, we get
    \begin{align*}
        n^\frac{k}{2}\pr\!&\left(C_1^n = \floor{x_1n^\frac{1}{2}}, \dots, C_k^n = \floor{x_kn^\frac{1}{2}}\right)\! =  n^{\frac{k}{2}}\prod_{i=1}^k \frac{g(n,\floor{x_in^\frac{1}{2}})}{1 - g(n,\floor{x_in^\frac{1}{2}})} \hspace{-4pt}\prod_{i=1}^{\floor{x_kn^\frac{1}{2}}}\hspace{-4pt}(1 - g(n,i)),\\
        &= (1 + o(1))\ln^\gamma(x_1 + 1) \cdots \ln^\gamma(x_k + 1) \prod_{i=1}^{\floor{x_kn^\frac{1}{2}}} \left(1 - \ln^\gamma(in^{-\frac{1}{2}} + 1)n^{-\frac{1}{2}}\right),
    \end{align*}
    and hence it suffices to show that,
    $$\prod_{i=1}^{\floor{x_kn^\frac{1}{2}}}\left(1 - \ln^\gamma\big(in^{-\frac{1}{2}} + 1\big)n^{-\frac{1}{2}}\right) \xrightarrow[n \to \infty]{u.c.} \exp\left(-\int_0^{x_k} \ln^\gamma(t+1)dt\right).$$
    For this, observe that,
    \begin{align*}
        \prod_{i=1}^{\floor{x_kn^\frac{1}{2}}}\left(1 - \ln^\gamma(in^{-\frac{1}{2}} + 1)n^{-\frac{1}{2}}\right)&= \exp\left(\sum_{i = 1}^{\floor{x_kn^\frac{1}{2}}} \ln\left(1 - \ln^\gamma(in^{-\frac{1}{2}} + 1)n^{-\frac{1}{2}}\right)\right),\\
        &=\exp\left(o(1)-\sum_{i=1}^{\floor{x_kn^\frac{1}{2}}} \ln^{\gamma}(in^{-\frac{1}{2}}+1)n^{-\frac{1}{2}} \right),\\
        &= \exp\left(o(1) - \int_0^{x_k} \ln^\gamma(t+1)dt\right),
    \end{align*}
    where the above convergence is uniform on compact sets. We used first-order approximation $\ln(1-x) = -x + O(x^2)$ as $x \to 0$ in the second inequality and recognized  the sum on the second line as a Riemann sum which converges uniformly to the integral as $\ln^\gamma(t+1)$ is continuous on $[0, x_k]$. This concludes the proof.  
\end{proof}
Below we state the logarithmic analogue of Lemma \ref{Lemma: attachment points are uniform}. The proof is completely analogous to the proof of Lemma \ref{Lemma: attachment points are uniform}.
\begin{lem}
For $t_1, \dots, t_k \in [0,1)$ and $0<s_1<\dots<s_k$, we have,
\begin{align*}
\pr\big(B_1^n\le t_1C^n_1,\dots, &B_k^n\le t_kC^n_k \; \big|\; C_1^n \le s_1n^{\frac{1}{2}}, \dots, C_k^n\le s_kn^{\frac{1}{2}}\Big)\\
\xrightarrow[n \to \infty]{} &\pr\big(B_1\le t_1C_1,\dots, B_k\le t_kC_k \; \big|\; C_1 \le s_1, \dots, C_k \le s_k\big).
\end{align*}
\end{lem}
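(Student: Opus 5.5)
The plan is to mirror the proof of Lemma~\ref{Lemma: attachment points are uniform}. Write $\mathcal{C}_k^n(s)=\{C_1^n\le s_1n^{\frac12},\dots,C_k^n\le s_kn^{\frac12}\}$. On the limiting side, conditional on the points of $\eta$ the $B_i$ are independent with $B_i\sim\mathrm{Unif}([0,C_i])$. Hence the right-hand side equals $t_1\cdots t_k$, and it suffices to show that the discrete conditional probability converges to $t_1\cdots t_k$.

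First condition on the cut times $C_1^n,\dots,C_k^n$. Given these, the $B_i^n$ are independent and uniform on $[C_i^n]$. The tower property then gives
\[
\pr\big(B_1^n\le t_1C_1^n,\dots,B_k^n\le t_kC_k^n \mid \mathcal{C}_k^n(s)\big)=\E\Big[\prod_{i=1}^k \frac{\floor{t_iC_i^n}}{C_i^n}\;\Big|\;\mathcal{C}_k^n(s)\Big].
\]
Each factor differs from $t_i$ by at most $1/C_i^n\le 1/C_1^n$, and all factors lie in $[0,1]$. The integrand is therefore $t_1\cdots t_k+O(1/C_1^n)$, with the $O$ uniformly bounded. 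So it remains to show that $C_1^n\to\infty$ in probability under $\pr(\cdot\mid\mathcal{C}_k^n(s))$; bounded convergence then finishes the argument.

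For this last step, fix $M>0$ and use
\[
\pr\big(C_1^n\le M\mid\mathcal{C}_k^n(s)\big)\le \frac{\pr(C_1^n\le M)}{\pr(\mathcal{C}_k^n(s))}.
\]
The numerator is at most $\sum_{i=1}^M g(n,i)\le M\ln^\gamma(Mn^{-\frac12}+1)n^{-\frac12}$. Since $\ln(1+x)\le x$, this is $O(n^{-\frac{1+\gamma}{2}})\to0$. For the denominator, Lemma~\ref{Lemma pdf of PPP log(t+1) converges} combined with the sum-to-integral argument of Lemma~\ref{convergence of repeat probabilities} (which carries over verbatim, using Lemma~\ref{uniform on compact sets and integrals}) gives $\pr(\mathcal{C}_k^n(s))\to\pr(C_1\le s_1,\dots,C_k\le s_k)$. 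This limit is strictly positive because the density $f_{C_1,\dots,C_k}$ is positive on $\{0<x_1<\dots<x_k\}$.

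There is no real obstacle here. The only point needing care is to check that the denominator stays bounded away from zero, which requires the logarithmic analogue of Lemma~\ref{convergence of repeat probabilities}. That analogue follows from Lemma~\ref{Lemma pdf of PPP log(t+1) converges} exactly as in the polynomial case, with $n^{\frac{\beta}{\beta+1}}$ replaced by $n^{\frac12}$.
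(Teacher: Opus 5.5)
Your proposal is correct and follows essentially the paper's route. The paper proves this lemma by declaring it completely analogous to Lemma~\ref{Lemma: attachment points are uniform}: reduce the right-hand side to $t_1\cdots t_k$, use the tower property to write the left-hand side as $\E[\prod_i \floor{t_iC_i^n}/C_i^n\mid\mathcal{C}_k^n(s)]$, and show $C_1^n\to\infty$ in conditional probability via $\pr(C_1^n\le M)/\pr(\mathcal{C}_k^n(s))$. You also make explicit two points the paper leaves implicit: the numerator bound $\sum_{i\le M} g(n,i)=O(n^{-(1+\gamma)/2})$, and the fact that the denominator converges to a strictly positive limit.
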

Lastly, we proof Theorem \ref{lemma convergence repeat and attach points PPP log(t+1)beta}.
\begin{proof}[Proof of Theorem \ref{lemma convergence repeat and attach points PPP log(t+1)beta}.]
    It suffices to show,
    \begin{align*}
        &i) \;\; \pr\big(C_1^n \le s_1n^\frac{1}{2}, \dots, C_k^n \le s_kn^\frac{1}{2}\big) \xrightarrow[n \to \infty]{} \pr\big(C_1 \le s_1, \dots, C_k \le s_k\big),\\
        &ii) \; \pr(B_1^n \le t_1C_1^n, \dots, B_k^n \le t_kC_k^n \mid C_1^n \le s_1n^\frac{1}{2}, \dots, C_k^n \le s_kn^\frac{1}{2}),\\
        &\xrightarrow[n \to \infty]{} \pr\big(B_1 \le t_1C_1, \dots, B_k \le t_k C_k \mid C_1 \le s_1, \dots, C_k \le s_k\big).
    \end{align*} 
    Statement $i)$ follows from identical reasoning as the proof of Lemma \ref{convergence of repeat probabilities}. Statement $ii)$ follows from a similar proof to that of Lemma \ref{Lemma: attachment points are uniform}.
\end{proof}
\begin{proof}[Proof of Theorem \ref{Theorem: finite dimensional convergence logarithm case}]
This follows from identical reasoning to that in Section \ref{Section: GH convergence polynomial case}.
\end{proof}

\subsection{Non-compactness of \texorpdfstring{$\T_\gamma$}{T} for \texorpdfstring{$\gamma \le 1$}{gamma le 1}}
\label{Section: non compactness of T}
In this section, we prove Theorem~\ref{thm:not_compact}, which says that for increasing $f$ with $\int_1^\infty e^{-f(t)}dt=\infty$, the $f(t)$-aggregation tree is almost surely non-compact. This implies that $\T_\gamma$ is not compact for $\gamma \le 1$.

\begin{proof}[Proof for Theorem~\ref{thm:not_compact}]
    Let $\eta$ be a PPP on $\R_{\ge 0}$ of intensity $f(t)dt$ and let $A_n = \{\eta([n-1, n)) = 0\}$ be the event that $[n-1, n)$ is void of points. Observe that,
    $$\pr(A_n) = \exp\!\left(- \int_{n-1}^n f(t) dt\right) \ge \exp\left(-f(n)\right),$$
    as $f$ is increasing. Furthermore we have,
    $$\int_1^\infty e^{-f(t)}dt = \infty \implies \infty = \sum_{n=1}^\infty \exp(-f(n)) \le \sum_{n=1}^\infty \pr(A_n),$$
    where the implication follows from the comparison test as $e^{-f(t)}$ is decreasing. Since the events $A_n$ are independent, we may invoke the Borel--Cantelli lemma to obtain that infinitely many events $A_n$ occur almost surely. We conclude there must be a stick of infinite length, or infinitely many sticks of length exceeding $1$. 

    In the first case, the $f(t)$-aggregation tree is clearly not compact. In the second case, let $I = \{i \in \N: C_i - C_{i-1} \ge 1\}$ be the set of indices of the sticks with length exceeding 1. It follows that $|I| = \infty$. Define the sequence $\{\rho(C_i)\}_{i \in I}$ contained in the $f(t)$-aggregation tree. For any $i < j \in \N$, we see $d\big(\rho(C_i), \rho(C_j)\big)\ge1$, as the path from $\rho(C_j)$ to $\rho(C_i)$ must necessarily traverse all of $\rho([C_{j-1}, C_j])$. Hence $\{\rho(C_i)\}_{i \in I}$ cannot have a convergent subsequence and hence the $f(t)$-aggregation tree is not compact. 
\end{proof}
 We observe $\T_\gamma$ is almost surely not compact when $\gamma \le 1$ as $\int_1^\infty e^{-\ln^\gamma(t+1)}dt$ is integrable if and only if $\gamma > 1$. In what follows, we show $\T_\gamma$ is compact whenever $\gamma > 1$.
\subsection{Hausdorff approximation of $\T$ by its first branches -- logarithmic case}
\label{Section: compactness of T logarithmic setting}
In this section, we aim to show,
\begin{thm}
\label{compactness continuous tree logarithm case}
    For $\gamma > 1$ and for all $\epsilon > 0$, we have,
    $$\lim_{t \to \infty}\pr\big(d_H\big(\T(t), \T\big)>\epsilon\big) = 0.$$
\end{thm}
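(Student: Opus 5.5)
I will follow the scheme of Section~\ref{Section: compactness T polynomial case}: by a union bound it suffices to find $\epsilon_i(t)$ with $\sum_i \epsilon_i(t)\to 0$ and $\sum_i \pr\big(d_H(\T(2^it),\T(2^{i+1}t))>\epsilon_i(t)\big)\to 0$ as $t\to\infty$. The first step is the analogue of Lemma~\ref{Lemma: distance point to T(a)}. I run the same ancestral-line exploration from $\rho(s)$, $s\in[a,2a]$, down to $\T(a)$. The number $N$ of sticks traversed satisfies $\pr(N>j)\le 2^x(1+x)^{-j}$ exactly as before, since that bound only uses the uniformity of attachment points. For the segment lengths, the intensity on $[a,\infty)$ is at least $\lambda=\ln^\gamma(a+1)$ because $g$ is increasing. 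Hence the $d_i$ are dominated by i.i.d.\ $\mathrm{Exp}(\lambda)$ variables, and the same Chernoff optimisation gives $\pr(d(\T(a),\rho(s))>c)\le 30\exp(-c\ln^\gamma(a+1)/4)$. The averaging argument of Lemma~\ref{Lemma: bound doubling tree} then yields
\[
\pr\big(d_H(\T(a),\T(2a))>c\big)\le \frac{60a}{c}\exp\!\Big(-\frac{c\ln^\gamma(a+1)}{8}\Big).
\]

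The second step is choosing $\epsilon_i$, and this is where $\gamma>1$ enters and where I expect the main obstacle. With $a=2^it$ and $L_i=\ln(2^it+1)\approx i\ln 2+\ln t$, the prefactor $60a/c$ forces $c\ln^\gamma(a+1)\gtrsim 8(1+\delta)L_i$. So take
\[
\epsilon_i(t)=\frac{16\,L_i}{\ln^\gamma(2^it+1)}=16\,L_i^{1-\gamma}.
\]
Since $\gamma>1$, the bound $\sum_i L_i^{1-\gamma}$ converges only if $\gamma>2$, as $L_i\sim i\ln2$. So plain dyadic blocks are too coarse for $1<\gamma\le 2$. I would fix this by replacing dyadic scales with much faster growing scales $a_{i+1}=a_i^{K}$ or $a_{i+1}=e^{a_i^{\delta}}$, i.e.\ blocks $[a_i,a_{i+1}]$ in which $\ln a_i$ grows geometrically, say $\ln a_i=q^i\ln t$.

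This requires redoing step one for $s\in[a,b]$ with $b\gg a$. The count bound becomes $\pr(N>j)\le\pr(U_1\cdots U_j>a/b)$, so $N\approx\ln(b/a)+O(\sqrt{\cdot})$. The path length is then roughly $N/\lambda\approx\ln(b/a)/\ln^\gamma a$, plus a large-deviation margin. I will redo the Chernoff estimate with $j\approx 2e\ln(b/a)+c'$ and get a tail of the form $\exp(-c\ln^\gamma(a)/4)$ valid for $c\ge C\ln(b/a)/\ln^\gamma a$. With $\ln b=q\ln a$ this threshold is of order $(\ln a)^{1-\gamma}=(q^i\ln t)^{1-\gamma}$, which is summable in $i$ and tends to $0$ as $t\to\infty$ for every $\gamma>1$.

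The final step is the union bound over the $O(b)$ points needed in the averaging step, which contributes a factor $b/c$. This must be beaten by $\exp(-c\ln^\gamma a/8)$, i.e.\ I need $c\ln^\gamma a\gg \ln b=q\ln a$. I choose $\epsilon_i=C_q(\ln a_i)^{1-\gamma}\cdot i$ (the extra factor $i$ gives summable failure probabilities like $e^{-i\ln t}$), which keeps $\sum_i\epsilon_i\to0$. Balancing these constants, in particular ensuring the Chernoff choice of $j$ and $\theta$ still dominates $\ln(b/a)$, is the delicate part. If it fails, I would instead bound $d_H(\T(a),\T(b))$ by chaining dyadic steps inside each block and controlling only the maximum over the block rather than summing.
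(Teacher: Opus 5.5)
Your plan is correct, and its skeleton is the paper's. The paper also abandons dyadic blocks and proves the point-to-subtree estimate for $l\in[a,b]$. It states it as $\pr(d(\T(a),\rho(l))>c)\le 4(b/a)^2\exp(-c\ln^\gamma(a+1)/4)$, obtained with $x=2$ in the bound on $N$ and $j=\lfloor c\lambda/4\rfloor$, where $\lambda=\ln^\gamma(a+1)$. This is essentially your thresholded version, since $(b/a)^2\le e^{c\lambda/8}$ once $c\lambda\ge 16\ln(b/a)$. It then averages to get the prefactor $(b/a)^2\,b/c$ and sums over blocks.

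Where you differ is the choice of scales. The paper takes $a_i=t\exp(i^\alpha)$, so $\ln a_i=\ln t+i^\alpha$ grows polynomially in $i$ and $\epsilon_i\approx 8(3\ln a_{i+1}+2\ln t+\dots)/\ln^\gamma a_i$. Making $\sum_i\epsilon_i\to0$ then forces the tuning $\alpha(\gamma-1)>1$ (the paper uses $\alpha=2/(\gamma-1)$). The tuning is needed both for the roughly $\ln^{1/\alpha}t$ early blocks, which contribute $\ln^{1-\gamma+1/\alpha}t$, and for the tail $\sum i^{\alpha(1-\gamma)}$.

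Your choice $a_i=t^{q^i}$ gives $\ln a_{i+1}/\ln^\gamma a_i=q\,(q^i\ln t)^{1-\gamma}$. This is a geometric series for every $q>1$ and every $\gamma>1$, so no tuning is needed and the verification is shorter. One checks that with $\epsilon_i=C(i+1)(q^i\ln t)^{1-\gamma}$ and $C\ge 8(3q-2)+16$, the failure probability of block $i$ is at most $8(\ln t)^{\gamma-1}q^{(\gamma-1)i}t^{-2(i+1)}$. These bounds sum to $o(1)$ as $t\to\infty$, while $\sum_i\epsilon_i=O((\ln t)^{1-\gamma})\to0$.

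Two small fixes are needed:
\begin{itemize}
\item Use $(i+1)$ rather than $i$ in $\epsilon_i$, since otherwise $\epsilon_0=0$ and the first block can never succeed.
\item In the Chernoff step, take $j$ proportional to $c\lambda$ rather than ``$2e\ln(b/a)$ plus a constant''. Your threshold $c\ge C\ln(b/a)/\lambda$ is exactly what guarantees $j\gtrsim\ln(b/a)$. With $j$ fixed independently of $c$, the bound on $\pr(N>j)$ would not decay in $c$.
\end{itemize}
The fallback of chaining dyadic steps inside each block is unnecessary.
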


It turns out that naively following the results in Section \ref{Section: compactness T polynomial case} is not sufficient to prove the above result as following the exact reasoning used in Lemma \ref{Lemma: distance point to T(a)} and Lemma \ref{Lemma: bound doubling tree} yields that $\pr\big(d_H\big(\T(a), \T(2a)\big) > c\big) \le C\frac{a}{c} \exp\Big(-\frac{c\ln^\gamma(a+1)}{8}\Big)$ for some constant $C > 0$. By substituting $c = \epsilon_i(t)$ and $a = 2^it$, we obtain,
    $$\pr\big(d_H\big(\T(2^it), \T(2^{i+1}t)\big)>\epsilon_i(t)\big) \le  C\exp\left(\ln\left(\frac{2^{i+1}t}{\epsilon_i}\right)-\frac{\epsilon_i(t) \ln^\gamma(2^it+1)}{8}\right).$$
    The above bound is incompatible with the conditions in \eqref{conditions on epsilon_i}. Indeed for large $i$ and fixed $t$, we have $\ln(2^it + 1) \asymp i$ and hence, at least heuristically, the positive part in the exponent can only be overcome if asymptotically $\epsilon_i(t) i^\gamma > i$. Thus we require $\epsilon_i(t) > i^{1 - \gamma}$ asymptotically. As we require $\sum_{i=1}^\infty \epsilon_i(t) \to 0$ as $t \to\infty$, we need $\epsilon_i$ to be summable for fixed large $t$ and thus the argument cannot go through for $\gamma \le 2$.

    The solution lies in adapting the doubling argument. Observe that Theorem \ref{compactness continuous tree logarithm case} is shown by finding an increasing and diverging sequence $x_i$ with $x_0 = 1$ and $\epsilon_i : [0,\infty) \to [0, \infty)$ for which:
    \begin{equation}
        \label{Equation: conditions needed compactness continuous tree logarithmic}
            i) \lim_{t \to \infty} \sum_{i=0}^\infty \epsilon_i(t) = 0 \quad \text{ and } \quad ii) \lim_{t \to \infty} \sum_{i=0}^\infty \pr\big(d_H\big(\T(x_it), \T(x_{i+1}t)\big) > \epsilon_i(t)\big) = 0,
        \end{equation}
    In Section \ref{Section: compactness T polynomial case} we used the sequence $x_i = 2^i$. In the current section, we will use $x_i = \exp(i^\alpha)$ for some parameter $\alpha > 0$ depending on $\gamma$.

    We first state the logarithmic analogues of Lemmas \ref{Lemma: distance point to T(a)} and \ref{Lemma: bound doubling tree}. Both proofs are identical to the polynomial counterpart after replacing $\alpha^\beta$ with $\ln^\gamma(a+1)$ and $2a$ with $b$.

    \begin{lem}
    Fix $1\le a<b$ and let $l \in [a, b]$, then for all $c > 0$ we have,
    $$\pr\!\left(d\left(\T(a), \rho(l)\right)>c\right) \le 4\left(\frac{b}{a}\right)^2 \exp\left(-\frac{c\ln^\gamma(a+1)}{4}\right).$$
    \end{lem}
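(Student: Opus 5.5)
We follow the proof of Lemma~\ref{Lemma: distance point to T(a)}, tracing the ancestral line of $\rho(l)$ until it first hits $\T(a)$. Let $\eta$ be the PPP of intensity $\ln^\gamma(t+1)\,dt$ and $U_1,U_2,\dots$ i.i.d.\ uniform on $[0,1]$. Set $t_1=l$ and
\[
p(t_i)=\max\bigl(\{x\in\eta:x\le t_i\}\cup\{a\}\bigr),\qquad d_i=t_i-p(t_i),\qquad t_{i+1}=\max\{a,U_ip(t_i)\}.
\]
Let $N=\min\{i:U_ip(t_i)\le a\}$. Then $d(\T(a),\rho(l))=\sum_{i\le N}d_i$. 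We bound the two ingredients separately and combine them with a union bound over the events $\{N>j\}$ and $\{\sum_{i\le j}d_i>c\}$.

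\textbf{Tail of $N$.} On $\{N>j\}$ we have $a<U_j\cdots U_1\,l\le U_j\cdots U_1\,b$. Hence $\pr(N>j)\le \pr\bigl(\prod_{i\le j}U_i>a/b\bigr)$. By Markov applied to $\prod U_i^{-x}$... more conveniently, to $\prod U_i^{x}$, we get
\[
\pr(N>j)\le (b/a)^x(1+x)^{-j}\qquad\text{for all } x>0 .
\]
This is the polynomial-case bound with $2$ replaced by $b/a$.

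\textbf{Tail of the $d_i$.} If $t_i-u>a$, then $\{d_i>u\}$ requires $\eta([t_i-u,t_i])=0$. Since $\ln^\gamma(t+1)\ge \lambda:=\ln^\gamma(a+1)$ on $[a,\infty)$, we have
\[
\pr\bigl(d_i>u\mid d_1,\dots,d_{i-1},t_i\bigr)\le e^{-\lambda u}.
\]
So the $d_i$ are stochastically dominated by i.i.d.\ $\mathrm{Exp}(\lambda)$ variables, and the Chernoff bound gives
\[
\pr\Bigl(\sum_{i\le j}d_i>c\Bigr)\le e^{-c\theta}\Bigl(\frac{\lambda}{\lambda-\theta}\Bigr)^j .
\]

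\textbf{Choosing parameters.} Take $x=e-1$, $\theta=\lambda(e-1)/e$ and $j=\lfloor c\lambda(e-1)/(2e)\rfloor$, as in the polynomial case. Then $e^{-c\theta}e^{j}\le e^{-c\lambda(e-1)/(2e)}$ and $(1+x)^{-j}=e^{-j}$. Together these give
\[
\pr\bigl(d(\T(a),\rho(l))>c\bigr)\le \bigl(1+e\,(b/a)^{e-1}\bigr)\,e^{-\lfloor c\lambda(e-1)/(2e)\rfloor}.
\]
Using $(e-1)/(2e)>1/4$, the exponent is at least $c\lambda/4-1$. Since $b/a\ge1$ and $e-1<2$, the prefactor satisfies $e\bigl(1+e(b/a)^{e-1}\bigr)\le e(1+e)(b/a)^2$.

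\textbf{Main obstacle.} The numerical constant $e(1+e)\approx 10.1$ exceeds the stated $4$. To recover $4(b/a)^2$ I would tune the parameters rather than reuse the polynomial choice. Using $x=2$ gives the exact factor $(b/a)^2$ and $(1+x)^{-j}=3^{-j}$. Pairing this with $\theta=\lambda/2$ gives $e^{-c\lambda/2}2^j$. Then I would pick $j\approx c\lambda/4$ so that both terms are at most roughly $e^{-c\lambda/4}$, absorbing the floor into the constant and checking that the total prefactor stays at most $4$. The stated constant is immaterial for the later summation argument, so if it cannot be matched exactly, a slightly larger absolute constant suffices.
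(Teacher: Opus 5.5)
This is essentially the paper's proof: the paper reruns the polynomial-case ancestral-line argument with $\lambda=\ln^\gamma(a+1)$ and $2$ replaced by $b/a$, and you are right that the polynomial choice $x=e-1$ only yields a prefactor of about $10(b/a)^2$. Your tuned choice $x=2$, $\theta=\lambda/2$, $j=\lfloor c\lambda/4\rfloor$ (the same parameters the paper uses in its discrete logarithmic analogue) gives exactly the stated constant, because $e^{-c\lambda/2}2^{j}\le e^{-c\lambda(2-\ln 2)/4}\le e^{-c\lambda/4}$ and $(b/a)^2 3^{-j}\le 3(b/a)^2 3^{-c\lambda/4}\le 3(b/a)^2e^{-c\lambda/4}$, which sum to at most $4(b/a)^2e^{-c\lambda/4}$.
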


    \begin{lem}
\label{Lemma in showing distances tree xk xk+1 beta > 1}
    For $1 \le a < b$, we have, $\pr\!\left(d_H(\T(a), \T(b))>c\right) \le8\left(\frac{b}{a}\right)^2\frac{b}{c}\exp\left(\frac{-c\ln^\gamma(a+1)}{8}\right)$.
\end{lem}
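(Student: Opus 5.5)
We prove the statement by the same averaging argument as Lemma~\ref{Lemma: bound doubling tree}, with $2a$ replaced by $b$. The input is the preceding logarithmic analogue of Lemma~\ref{Lemma: distance point to T(a)}, applied with $c/2$ in place of $c$. For every $l\in[a,b]$ it gives
$$\pr\!\left(d(\T(a),\rho(l))>\tfrac c2\right)\le 4\left(\tfrac ba\right)^2\exp\!\left(-\tfrac{c\ln^\gamma(a+1)}{8}\right).$$
We integrate this over $l\in[a,b]$ and apply Fubini, which is legitimate because $(l,\omega)\mapsto d(\T(a),\rho(l))$ is jointly measurable ($\rho$ is continuous in $l$ on each stick). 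This yields
$$\E\!\left[\lambda\big(\{l\in[a,b]: d(\T(a),\rho(l))>c/2\}\big)\right]\le 4(b-a)\left(\tfrac ba\right)^2 e^{-c\ln^\gamma(a+1)/8}\le 4b\left(\tfrac ba\right)^2 e^{-c\ln^\gamma(a+1)/8}.$$

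Next we show that on the event $\{d_H(\T(a),\T(b))>c\}$ this Lebesgue measure exceeds $c/2$. Since $\T(a)\subset\T(b)$, the Hausdorff distance is $\sup_{x\in\T(b)} d(\T(a),x)$. So on this event there is $x=\rho(l_0)$, $l_0\in(a,b]$, with $d(\T(a),x)>c$.

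Consider the geodesic from $x$ to $\T(a)$ inside the tree $\T(b)$. It is the ancestral line of $\rho(l_0)$: it descends along sticks through attachment points $\rho(B_{i-1})$ until it first hits $\T(a)$. Every point on it lies in $\rho((a,b])$. Its initial segment of length $c/2$ consists of points at distance more than $c/2$ from $\T(a)$, because distance to $\T(a)$ decreases at unit rate along the geodesic.

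This segment is the image under $\rho$ of a subset of $[a,b]$ of Lebesgue measure at least $c/2$. Indeed, $\rho$ is an isometry from each stick interval $(C_{i-1},C_i]$ onto its image, and the segment is a finite union of pieces of distinct sticks. Here we use that $\T(b)$ has finitely many sticks almost surely. Markov's inequality then gives
$$\pr\big(d_H(\T(a),\T(b))>c\big)\le \frac{2}{c}\cdot 4b\left(\tfrac ba\right)^2 e^{-c\ln^\gamma(a+1)/8}=8\left(\tfrac ba\right)^2\frac bc\, e^{-c\ln^\gamma(a+1)/8},$$
which is exactly the claimed bound.

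The only point requiring care is the geometric claim that the $c/2$-initial segment of the ancestral path pulls back to a set of measure at least $c/2$ inside $[a,b]$. One must check that the path does not leave $\rho((a,b])$ before reaching $\T(a)$. It cannot: whenever the path jumps to an attachment point $\rho(B_{i-1})$ with $B_{i-1}\le a$, that point already lies in $\T(a)$. Measurability and the Fubini step are routine. The constant bookkeeping is immediate once the previous lemma is accepted.
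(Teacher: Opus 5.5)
Your proposal is correct and follows essentially the same argument as the paper. The paper proves this lemma exactly like Lemma~\ref{Lemma: bound doubling tree}, with $2a$ replaced by $b$: apply the pointwise bound with $c/2$, integrate over $[a,b]$, note that a point at distance more than $c$ from $\T(a)$ forces an ancestral segment of Lebesgue measure at least $c/2$ at distance more than $c/2$, and conclude with Markov's inequality; your bookkeeping ($4(b-a)\le 4b$, the factor $2/c$) reproduces the constant $8$.
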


\begin{proof}[Proof of Theorem \ref{compactness continuous tree logarithm case}.]
We aim to show statement $i)$ and $ii)$ in \eqref{Equation: conditions needed compactness continuous tree logarithmic}. By making the substitutions $a = x_it, \; b = x_{i+1}t$ and $c = \epsilon_i$ in Lemma \ref{Lemma in showing distances tree xk xk+1 beta > 1} and simplifying, condition $ii)$ can be
$$\lim_{t \to \infty} \sum_{i=0}^\infty \frac{1}{\epsilon_i} \exp\left(3\ln(x_{i+1}) + \ln(t) -\frac{\epsilon_i \ln^\gamma(x_it)}{8}\right) = 0,$$
where we replaced $\ln^\gamma(x_it+1)$ with $\ln(x_it)$ as $\ln(x_it+1) \ge \ln(x_it)$ and $\gamma > 1$. Next, we choose $\epsilon_i(t) = \frac{8}{\ln^\gamma(x_it)}\left(3\ln(x_{i+1}) + 2\ln(t) + \ln(g(i))\right)$ for ansatz function $g(i)$ with $g(i) \ge 1$ for all $i \in \N_0$. Recall that $x_i = \exp\big(i^\alpha)\big)$ so that we obtain,
\begin{align*}
    \sum_{i=0}^\infty \frac{1}{\epsilon_i} \exp\left(3\ln(x_{i+1}) + \ln(t) -\frac{\epsilon_i \ln^\gamma(x_it)}{8}\right) = \frac{1}{t}\sum_{i=0}^\infty \frac{1}{\epsilon_ig(i)},
\end{align*}
so that it suffices to show $\frac{1}{t}\sum_{i=0}^\infty \frac{1}{\epsilon_ig(i)} \to 0$ as $t \to \infty$. For this, observe that $\epsilon_i \ge \frac{1}{\ln^\gamma(x_it)}$ for $t \ge e$ and hence, 
\begin{align*}
    \frac{1}{t}\sum_{i=0}^\infty \frac{1}{\epsilon_i g(i)} \le \frac{1}{t}\sum_{i=0}^\infty \frac{\ln^\gamma(x_it)}{g(i)} &= \frac{1}{t} \left(\sum_{\{i : i^\alpha \le \ln(t)\}} \frac{(i^\alpha + \ln(t))^\gamma}{g(i)} + \sum_{\{i : i^\alpha > \ln(t)\}} \frac{(i^\alpha + \ln(t))^\gamma}{g(i)}\right),\\
    &\le \frac{1}{t}\left(2^\gamma \ln^\gamma(t) \ln^\frac{1}{\alpha}(t) + 2^\gamma \sum_{i^\alpha > \ln(t)}\frac{i^{\alpha \gamma}}{g(i)}\right)\xrightarrow[t \to \infty]{} 0,
\end{align*}
where in the last step, we chose $g(i) = (i+1)^{2 + \alpha \gamma}$, so that $\sum_{i^\alpha > \ln(t)}\frac{i^{\alpha \gamma}}{g(i)}$ is finite for all $t$. In particular, this shows that with this choice of $\epsilon_i$ we have,
$$\lim_{t \to \infty} \sum_{i=0}^\infty\pr\big(d_H\big(\T(x_it), \T(x_{i+1}t)\big)>\epsilon_i\big) = 0.$$
Lastly, we show $\sum_{i=0}^\infty \epsilon_i \to 0$ as $t \to \infty$. For this, we set $I_1 = \{i \in \N_0 : (i+1)^\alpha \le \ln(t)\}$ and $I_2 = \{i \in \N_0 : (i+1)^\alpha > \ln(t)\}$. We obtain,
\begin{align*}
    \sum_{i = 0}^\infty \epsilon_i &= 8\sum_{i=0}^\infty \frac{3(i+1)^\alpha + 2\ln(t) + \ln\big((i+1)^{\alpha\gamma+2}\big)}{\big(i^\alpha + \ln(t)\big)^\gamma},\\
    &\le 8 \sum_{I_1} \frac{5\alpha\ln(t) + (\alpha\gamma+2)\ln(\ln(t))}{\alpha\ln^\gamma(t)} + 8 \sum_{I_2} \frac{5(i+1)^\alpha + (\alpha\gamma + 2) \ln(i+1)}{i^{\alpha\gamma}},\\
    &\le C_1 \ln(t)^{1 - \gamma + \frac{1}{\alpha}} + C_2 \sum_{I_2} i^{\alpha - \alpha \gamma},
\end{align*}
where the final bound holds for $t$ large enough and positive constants $C_1$ and $C_2$ possibly depending on $\alpha$ and $\gamma$. The final expression goes to zero when $1 - \gamma + 1/\alpha< 0$ and $\alpha - \alpha\gamma < -1$. Both conditions are satisfied when $\alpha \cdot (\gamma-1)>1$. For example, we may pick $\alpha(\gamma) = \frac{2}{\gamma-1}$ which is positive whenever $\gamma > 1$. Thus we have shown that with,
$$x_i = \exp\big(i^{2/(\gamma-1)}\big) \text{ and } \epsilon_i = \frac{8}{\ln^\gamma(x_it)} \big(3\ln(x_{i+1}) + 2\ln(t) + (2 + \alpha \gamma)\ln(i+1)\big),$$ both conditions in \eqref{Equation: conditions needed compactness continuous tree logarithmic} are satisfied, finishing the proof. 
\end{proof}

\subsection{Hausdorff approximation of $\T_n$ by its first branches -- logarithmic case}
\label{Section: Compactness of T_n logarithmic setting}
In this section, we aim to show,
\begin{thm}
\label{Theorem: compactness tree discrete logarithm case}
    For all $\epsilon > 0$, we have,
    $$\lim_{t \to \infty} \limsup_{n \to \infty} \pr\big(d_H\big(\T_n(tn^\frac{1}{2}), \T_n\big)>\epsilon n^\frac{1}{2}\big) = 0.$$
\end{thm}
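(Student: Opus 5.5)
We follow the discrete polynomial argument of Section~\ref{Section: compactness T_n polynomial case}, but replace the doubling scheme by the sequence $x_i=\exp(i^\alpha)$ with $\alpha=2/(\gamma-1)$, as in the proof of Theorem~\ref{compactness continuous tree logarithm case}. The scale is $n^{1/2}$. Throughout we take $\gamma>1$.

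\textbf{Step 1 (one vertex).} We first prove a discrete analogue of the logarithmic point bound. Fix integers $a<b\le n$ and a vertex $s\in\{a+1,\dots,b\}$. We explore the ancestral line $s,p(s),p^2(s),\dots$ as in Lemma~\ref{Lemma: distance vertex tree discrete case}. While $p^{k-1}(s)\notin[a]$, the next step is a jump with conditional probability at least $g(n,a)=\ln^\gamma(an^{-1/2}+1)n^{-1/2}$, using that $g(n,\cdot)$ is increasing. Given a jump, it lands in $[a]$ with probability at least $a/b$. Hence
\[
\pr\big(d_n(\T_n(a),s)>c\big)\le \exp\!\big(-c\,\tfrac{a}{b}\,g(n,a)\big).
\]
The factor $a/b$ is weaker than in the continuum lemma, where it is absorbed into a polynomial prefactor $(b/a)^2$. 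To recover that structure, we repeat the continuum argument in discrete form. The number of jumps needed to land in $[a]$ is dominated by the number $N$ of uniform factors needed for $b\prod U_j\le a$, because the landing point is uniform on $[p^{k-1}(s)-1]$. The gaps between jumps are dominated by i.i.d.\ geometric variables with parameter $g(n,a)$. Applying the same two Chernoff bounds as in Lemma~\ref{Lemma: distance point to T(a)} then gives
\[
\pr\big(d_n(\T_n(a),s)>c\big)\le C\Big(\tfrac{b}{a}\Big)^2\exp\!\big(-c\,g(n,a)/4\big).
\]

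\textbf{Step 2 (Hausdorff bound).} The counting argument of Lemma~\ref{Lemma: bound dubbling discrete uniform} upgrades Step 1 to
\[
\pr\big(d_H(\T_n(a),\T_n(b))>c\big)\le C\Big(\tfrac{b}{a}\Big)^2\tfrac{b}{c}\exp\!\big(-c\,g(n,a)/8\big).
\]
We apply this with $a=x_itn^{1/2}$, $b=x_{i+1}tn^{1/2}$ and $c=\epsilon_i(t)n^{1/2}$, where $\epsilon_i$ is exactly the choice from the proof of Theorem~\ref{compactness continuous tree logarithm case}. Since $c\,g(n,a)=\epsilon_i\ln^\gamma(x_it+1)$ and $b/c=x_{i+1}t/\epsilon_i$, every $n$ cancels. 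The $i$th term is therefore bounded by the same quantity as in the continuum proof, uniformly in $n$, and the sum over $i$ tends to $0$ as $t\to\infty$. We also use $\sum_i\epsilon_i(t)\to0$.

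\textbf{Step 3 (truncation).} The sequence must stop once $x_{i+1}tn^{1/2}\ge n$, because $\T_n(a)=\T_n$ for $a\ge n$. We truncate at the first such index and use $b=n$ there. Every bound only improves under this change, so the union bound over $i$ yields the theorem after taking $\limsup_n$ and then $t\to\infty$.

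We also need $g(n,i)\le1$ for the Bernoulli variables to make sense. For $i\le n$ we have $g(n,i)\le\ln^\gamma(n^{1/2}+1)n^{-1/2}\to0$, so this holds for all large $n$.

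\textbf{Main obstacle.} The hardest step is Step 1. Specifically, one must check that the discrete exploration (jump probabilities increasing along the path, uniform landing on $[p^{k-1}(s)-1]$) stochastically dominates the continuum pair of jump count and geometric gaps, so that the exponent carries the full rate $g(n,a)$ without losing any factor of $a/b$. If that domination fails, the fallback is to take $x_{i+1}/x_i$ bounded, which makes the cruder bound $e^{-c\,g(n,a)a/b}$ sufficient. However, this reverts to doubling, which works only for $\gamma>2$. So the sharp, continuum-matching bound is essential.
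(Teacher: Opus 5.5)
Your proposal is correct and follows essentially the same route as the paper: a branch-by-branch exploration in which the number of traversed branches is controlled by $b\prod U_j\ge a$ and the gaps are dominated by geometrics with parameter $g(n,a)$, giving the point bound $C(b/a)^2e^{-c\,g(n,a)/4}$. This is then upgraded to a Hausdorff bound by counting, and the substitution $a=x_itn^{1/2}$, $b=x_{i+1}tn^{1/2}$, $c=\epsilon_in^{1/2}$ cancels $n$ and reduces everything to the continuum estimate. Your truncation at $b=n$ is a useful detail the paper leaves implicit, since its discrete lemmas are stated only for $b\le n$.
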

 As seen before, it suffices to show, that for the choices $\alpha(\gamma) = \frac{2}{\gamma-1}$, $x_i = \exp\big(i^\alpha)$ and $\epsilon_i = \frac{8}{\ln^\gamma(x_it)}\big(3\ln(x_{i+1}) + 2\ln(t) + (\alpha\gamma+2)\ln(i+1)\big)$, both
 \begin{equation}
 \label{Equation: conditions compactness discrete logarithm case}
     i) \sum_{i=0}^\infty \epsilon_i(t) \qquad \text{ and } \qquad ii)  \limsup_{n \to \infty}\sum_{i=0}^\infty \pr\big(d_H\big(\T(x_itn^\frac{1}{2}), \T(x_{i+1}tn^\frac{1}{2})\big) > \epsilon_i n^\frac{1}{2}\big)
 \end{equation}
 tend to 0 as $t \to \infty$.
\begin{lem}
\label{lemma: also to check}
    Fix integers $n^\frac{1}{2}\le a<b \le n$ and $l \in \{a, a+1, \dots, b\}$. Then for all $c > 0$ we have,
    $$\pr\!\left(d_n\left(\T_n(a), l\right)>c\right) \le 10\left(\frac{b}{a}\right)^2\exp\left(-\frac{c\ln^\gamma(an^{-\frac{1}{2}})n^{-\frac{1}{2}}}{4}\right).$$
\end{lem}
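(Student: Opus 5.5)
We follow the ancestral-line argument of Lemma~\ref{Lemma: distance vertex tree discrete case}, but now the attachment vertex only lands in $[a]$ with probability about $a/l \ge a/b$, rather than at least $1/2$. So the ancestral line from $l$ may first make several ``jumps'' inside $\{a+1,\dots,b\}$ before reaching $\T_n(a)$. That is where the factor $(b/a)^2$ comes from.

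\emph{Step 1: exploration and per-step estimate.} We reveal $l, p(l), p^2(l), \dots$ until the first index $K$ with $p^K(l)\in[a]$, so that $d_n(\T_n(a),l)=K$. While $p^{k-1}(l)=v>a$, the next step is a uniform jump with probability $R_{v-1}=1$, which has probability $g(n,v-1)\ge g(n,a)\ge \ln^\gamma(an^{-1/2})n^{-1/2}=:\lambda$. Here we use that $g(n,\cdot)$ is increasing and $\ln(an^{-1/2}+1)\ge \ln(an^{-1/2})$. This lower bound holds conditionally on the past, because the $R_i$ for $i<v$ have not yet been revealed. The same conditional independence as in the polynomial case justifies this: along an ancestral line we only ever query $R_{v-1}$ for strictly decreasing $v$. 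Given a jump, the new vertex is uniform on $[v-1]$. Hence it lies in $[a]$ with probability at least $a/b$, and in any case it lies below $v$.

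\emph{Step 2: split the count.} We let $J$ be the number of jumps made before reaching $[a]$, and write $K=\sum_{r=0}^{J}G_r$, where $G_r$ is the number of steps between consecutive jump opportunities that succeed. Each $G_r$ is stochastically dominated by a $\mathrm{Geom}(\lambda)$ variable, independently of the past. Each jump terminates with probability at least $a/b$, so $\pr(J\ge j)\le(1-a/b)^j$.

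\emph{Step 3: Chernoff and union bound.} For any $j$,
\[\pr(K>c)\le \pr(J\ge j)+\pr\Big(\textstyle\sum_{r=0}^{j}G_r>c\Big).\]
Dominating the geometrics by $1+\mathrm{Exp}(\lambda)$ (or using the continuous analogue as in Lemma~\ref{Lemma: distance point to T(a)}), the second term is at most $e^{-c\theta}(\lambda/(\lambda-\theta))^{j+1}$ up to the discreteness correction. The first term is at most $e^{-ja/b}$. We take $\theta=\lambda/2$ and choose $j\approx c\lambda/(4\ln 2)$, so that $2^{j+1}e^{-c\lambda/2}\le 2e^{-c\lambda/4}$. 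The first term is then $e^{-(a/b)c\lambda/(4\ln2)}$.

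This exposes the main obstacle. The jump-count term decays only at rate $(a/b)c\lambda$, not $c\lambda$, so a naive choice of $j$ loses a factor $a/b$ in the exponent rather than producing the polynomial prefactor $(b/a)^2$. To get a prefactor instead, we would control $J$ differently. Each jump lands uniformly below its current position, so the position decreases multiplicatively, by a uniform factor, as in Lemma~\ref{Lemma: distance point to T(a)}. On $\{J>j\}$ we have $a< U_1\cdots U_j b$, and Chernoff then gives
\[\pr(J>j)\le (b/a)^x(1+x)^{-j}.\]
Taking $x=2$ gives the factor $(b/a)^2 3^{-j}$. Choosing $j=\lfloor c\lambda/4\rfloor$ and $\theta=\lambda(e-1)/e$ as in Lemma~\ref{Lemma: distance point to T(a)}, and absorbing constants and the floor, yields the bound $10(b/a)^2\exp(-c\lambda/4)$. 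One discrete subtlety remains: a jump from $v$ is uniform on $[v-1]$, so the ratio dominates a uniform variable only up to $O(1/v)$ errors. Since $v>a\ge n^{1/2}$, we expect this to be harmless, and it can be absorbed by using ratios $\lceil U v\rceil/v$.
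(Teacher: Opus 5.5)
Your final argument, after you discard the $(1-a/b)^j$ false start, is essentially the paper's proof. The paper also splits the ancestral path into branch segments, each dominated by $\mathrm{Geom}(\lambda)$ with $\lambda=\ln^\gamma(an^{-1/2})n^{-1/2}$. It bounds the number of branches by the multiplicative decrease $t_{i+1}-1\le U_i(t_i-1)$ together with the Chernoff bound $(b/a)^x(1+x)^{-(j-1)}$, taking $x=2$ and $j=\lfloor c\lambda/4\rfloor$, and finishes with the same union bound, using the geometric MGF at $\theta=\lambda/2$ where you use $\theta=\lambda(e-1)/e$. The discrete subtlety you flag disappears exactly if you track $v-1$ instead of $v$: a jump from $v$ lands at $w$ with $w-1=\lfloor U(v-1)\rfloor\le U(v-1)$, which is how the paper handles it.
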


\begin{rem}The above Lemma is the logarithmic equivalent of Lemma \ref{Lemma: distance vertex tree discrete case}. In the proof of that lemma, we iteratively revealed the ancestors $p^k(l)$ of $l$ until we found a vertex in $[a]$. Specifically, we used that $p^k(l)$ has probability at least $\big(a/n\big)^\beta$ to be sampled uniformly from $[p^{k-1}(l) - 1]$ in which case $p^k(l) \in [a]$ with probability at least $1/2$. These two statements together lower bound the probability that $p^k(l) \in [a]$. Repeating this proof in the current setting does not yield a strong enough bound. Instead we reveal the ancestral line from $l$ to $[a]$ one branch at a time, and keep better track of the uniform attachment locations of the branches as opposed to merely saying that each branch has probability at least $a/b$ of being attached to $\T(a)$.\end{rem}
\begin{proof}
    Let $U_1, U_2, \dots \sim \text{Unif}([0,1])$ i.i.d.\ and set $t_1 = l$. We iteratively reveal the path from $l$ to $\T(a)$ branch by branch. To this end, define,
    $$Q(t_i) = \max\{\{C_j^n: C_j^n < t_i\} \cup \{a\}\}, \quad d_i = t_i - Q(t_i), \quad t_{i+1} = \floor{U_iQ(t_i)} + 1.
$$
    Hence $Q(t_i)$ is the cut point preceding $t_i$, or $a$, depending on which is larger. Then $d_i - 1$ is the distance from $t_i$ to the start of the branch on which $t_i$ lies as the first vertex on this branch is distributed as $Q(t_i) + 1$. Lastly, $\floor{U_i Q(t_i)} + 1 \in_u \{1, \dots, Q(t_i)\}$ so that the parent of $Q(t_i) + 1$ equals $t_{i+1}$. Thus we have that the distance between $t_{i}$ and $t_{i+1}$ is given by $d_i$. If $Q(t_i) = a$, we have reached $\T(a)$ and the remaining distance from $t_i$ to $\T_n(a)$ is $d_i = t_i - a$. Thus $d(\T_n(a), l) = \sum_{i=1}^N d_i$ where  $N = \min\{k : Q(t_k) = a\}$ is the number of branches traversed on the path from $l$ to $\T(a)$.

We first bound the tail of $N$. Observe that $t_{i+1} = \floor{U_i Q(t_i)} + 1$ and $Q(t_i) \le t_i - 1$ both hold deterministically and thus $t_{i+1} - 1 \le U_i(t_i - 1)$ holds deterministically as well. By iterating, we obtain $t_{j} - 1 \le U_1 \cdots U_{j-1} (t_1 - 1) \le U_{j-1} \cdots U_1 b$ and hence,
    $$\pr(N > j) \le \pr(t_{j} - 1 \ge a) \le \pr(U_{j-1}\dots U_1 b \ge a) \le \left(\frac{1}{1+x}\right)^{j-1}\left(\frac{b}{a}\right)^x,$$
	holds for all $x > 0$ and follows from a Chernoff bound. Next, we bound $\sum_{i=1}^j d_i$. Provided that $t_i - u > a$, the event $\{d_i > u\}$ occurs when $R_j = 0$ for all $t_i - u \le j \le t_i - 1$. Conditional on $d_1, \dots, d_{i-1}$ and the previously revealed path, the Bernoulli variables $R_i$ have not yet been revealed and thus are independent of the previous exploration. Thus,
		\begin{align*}
			\pr\!\left(t_i - Q(t_i)>u\: \middle| \: d_1, \dots, d_{i-1}, t_i\right) &= \mathbbm{1}_{\{t_i - u > a\}} \prod_{j=1}^{u} \left(1 - \ln^\gamma\left((t_i - j)n^{-\frac{1}{2}}+1\right)n^{-\frac{1}{2}}\right),\\
            &\le \left(1 - \ln^\gamma\left(an^{-\frac{1}{2}} + 1\right)n^{-\frac{1}{2}}\right)^u,\\
            &\le \left(1 - \ln^\gamma\left(an^{-\frac{1}{2}}\right)n^{-\frac{1}{2}}\right)^u,
		\end{align*}
		where we disregard the $+1$ in the logarithm as we assume $n^\frac{1}{2} < a < n$. We conclude that the $d_i$s are dominated by a Geom$(\lambda)$ distribution where $\lambda = \ln^\gamma\hspace{-3pt}\big(an^{-\frac{1}{2}}\big)n^{-\frac{1}{2}}$. Since $d_i$ can be dominated by Geometric random variables, independent of the history $d_{1}, \dots, d_{i-1}$, standard arguments give the bound,
        $$\pr\!\left(\sum_{i=1}^j d_i > c\right) \le e^{-c\theta} \left(\frac{\lambda e^{\theta}}{1 - (1-\lambda)e^\theta}\right)^j,$$
which holds for all values of $\theta$ for which $(1 - \lambda)e^\theta < 1$. To finish the argument, apply a union bound to obtain that for any $x, j$ and any such $\theta$,
$$\pr\!\left(\sum_{i=1}^N d_i > c\right) \le \pr\!\left(\sum_{i=1}^j d_i > c\right) + \pr(N>j) \le e^{-c\theta} \left(\frac{\lambda e^{\theta}}{1 - (1-\lambda)e^\theta}\right)^j + \left(\frac{1}{x+1}\right)^{j-1} \left(\frac{b}{a}\right)^x\!\!\!.$$
		The theorem is proven by choosing $x = 2, \theta = \frac{\lambda}{2}$ and $j = \floor{c\lambda/4}$ as then,
        \begin{align*}
            e^{-c\theta} \left(\frac{\lambda e^\theta}{1 - (1-\lambda)e^\theta}\right)^j + \left(\frac{1}{x+1}\right)^{j-1} \left(\frac{b}{a}\right)^x &\le e^{-\frac{c\lambda}{2}}2^{\frac{c\lambda}{4}} +  9e^{-\frac{c\lambda}{4}}\left(\frac{b}{a}\right)^2,\\
            &\le 10\left(\frac{b}{a}\right)^2e^{-\frac{c\lambda}{4}},
        \end{align*}
        where we used that $\frac{\lambda e^{\lambda/2}}{1 - (1 - \lambda)e^{\lambda/2}} \le 2$ for all $\lambda \in [0,1]$. This concludes the proof. 
\end{proof}

Using identical reasoning to that of the proof of Lemma \ref{Lemma: bound dubbling discrete uniform}, we get,
\begin{lem}
    \label{big lemma showing discrete compactness beta > 1}
		For integers $n^\frac{1}{2} \le a < b \le n$ and all $c>0$, we have,
		$$\pr\!\left(d_H\left(\T_n(a), \T_n(b)\right)>c\right) \le 20\left(\frac{b}{a}\right)^2 \frac{b}{c} \exp\left(-\frac{c\ln^\gamma\big(an^{-\frac{1}{2}}\big)n^{-\frac{1}{2}}}{8}\right).$$
	\end{lem}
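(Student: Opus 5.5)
The plan is to copy the first-moment argument of Lemma~\ref{Lemma: bound dubbling discrete uniform}, replacing the doubling window $\{a+1,\dots,2a\}$ by $\{a+1,\dots,b\}$ and the pointwise estimate by Lemma~\ref{lemma: also to check}. Write $\lambda = \ln^\gamma\big(an^{-\frac12}\big)n^{-\frac12}$. Since $\T_n(a)\subset\T_n(b)$, the Hausdorff distance equals $\max_{v\in\{a+1,\dots,b\}} d_n(\T_n(a),v)$. We may assume $c\ge 2$: for $c<2$ the right-hand side is at least $20\cdot b/c\ge 1$, so the bound is trivial.

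\textbf{Step 1 (first moment).} Consider the random count
\[
Z=\#\Big\{s\in\{a+1,\dots,b\}: d_n\big(\T_n(a),s\big)>\tfrac c2\Big\}.
\]
Applying Lemma~\ref{lemma: also to check} with $c/2$ in place of $c$, for each $s$ (all such $s$ lie in $\{a,\dots,b\}$), and summing over at most $b$ vertices gives
\[
\E[Z]\le 10\,b\left(\tfrac ba\right)^2 e^{-c\lambda/8}.
\]

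\textbf{Step 2 (a far vertex forces many far vertices).} On the event $\{d_H(\T_n(a),\T_n(b))>c\}$ there is a vertex $v\in\{a+1,\dots,b\}$ with $d_n(\T_n(a),v)>c$. Labels decrease along ancestral lines, and $\T_n(a)$ is the connected subtree on $[a]$ containing the root. Hence the ancestors $v,p(v),\dots,p^{\lceil c/2\rceil-1}(v)$ are all distinct, all lie in $\{a+1,\dots,b\}$, and all have distance greater than $c/2$ from $\T_n(a)$. Therefore $Z\ge c/2$ on this event.

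\textbf{Step 3 (Markov).} Markov's inequality then gives
\[
\pr\big(d_H(\T_n(a),\T_n(b))>c\big)\le \frac{2}{c}\,\E[Z]\le 20\left(\tfrac ba\right)^2\frac bc\,e^{-c\lambda/8},
\]
which is exactly the claimed bound.

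There is no real obstacle. The only points needing care are that Lemma~\ref{lemma: also to check} is valid at level $c/2$ for arbitrary positive real $c$, and the rounding in the ancestor count in Step~2, which the convention $c\ge 2$ handles.
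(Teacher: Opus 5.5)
Your proposal is correct and is essentially the paper's proof: the paper obtains this lemma by repeating the first-moment/Markov argument of Lemma~\ref{Lemma: bound dubbling discrete uniform}, using the window $\{a+1,\dots,b\}$ and the pointwise bound of Lemma~\ref{lemma: also to check} at level $c/2$. Your preliminary reduction to $c\ge 2$ is unnecessary, since Step~2 already gives $Z\ge\lceil c/2\rceil\ge c/2$ for every $c>0$. It is just as well you don't need it, because its stated justification ("the right-hand side is at least $20\,b/c$") drops the exponential factor, which is at most $1$.
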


\begin{proof}[Proof of Theorem \ref{Theorem: compactness tree discrete logarithm case}]
        By substituting $a = x_itn^{\frac{1}{2}}, b = x_{i+1}tn^\frac{1}{2}$ and $c = \epsilon_in^\frac{1}{2}$ into Lemma \ref{big lemma showing discrete compactness beta > 1} and simplifying, we obtain that condition $ii)$ of Equation \eqref{Equation: conditions compactness discrete logarithm case} is translated into showing,
        $$\lim_{t \to \infty}\sum_{i=0}^\infty \frac{1}{\epsilon_i} \exp\left(3\ln(x_{i+1}) + \ln(t) - \frac{\epsilon_i \ln^\gamma(x_it)}{8}\right) = 0.$$
        This condition also appeared in the proof of Theorem~\ref{compactness continuous tree logarithm case} and was shown to hold for the choices $\alpha(\gamma) = \frac{2}{\gamma-1}$, $x_i = \exp\big(i^\alpha)$, $\epsilon_i = \frac{8}{\ln^\gamma(x_it)}\big(3\ln(x_{i+1}) + 2\ln(t) + (\alpha\gamma+2)\ln(i+1)\big)$. Condition $i)$ in Equation \eqref{Equation: conditions compactness discrete logarithm case}, concluding the proof.
    \end{proof}

\subsection{Prokhorov approximation of $(\T,\mu)$ by its first branches -- logarithmic case}
\label{Section: convergence measures continuous logarithmic case}
\begin{thm}
    For $\gamma > 1$ and all $\epsilon > 0$, we have,
    $$\lim_{k \to \infty}\pr\!\left(d_P\left(\mu^{(k)}, \mu\right)>\epsilon)\right) = 0.$$
\end{thm}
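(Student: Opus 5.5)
The plan is to repeat the argument of Section~\ref{Section: convergence measures continuous polynomial case} almost word for word, replacing the polynomial Hausdorff estimate by its logarithmic counterpart. Since $\T\subset\ell^1$ and $(\mathcal P(\ell^1),d_P)$ is Polish, it suffices to show that $(\mu^{(j)})_j$ is Cauchy in probability. The limit $\mu$ is then defined as the limit in probability, and it is supported on $\T$ because each $\mu^{(j)}$ is supported on $\T^{(j)}\subset\T$ and $\T$ is closed.

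The first step is the martingale property. Lemma~\ref{Lemma: Martingale property} never used the form of the intensity. It only used two facts: conditional on $\T^{(j)}$ and on $c_{j+1}$, the $(j+1)$st stick attaches inside $A^\uparrow$ with probability $\mu^{(j)}(A^\uparrow)$, and $\mu^{(j+1)}(A^\uparrow)$ takes one of the two explicit values written there. Both facts hold verbatim for a PPP of intensity $\ln^\gamma(t+1)dt$. Hence for every measurable $A\subset\T^{(k)}$, the sequence $(\mu^{(j)}(A^\uparrow))_{j\ge k}$ is a bounded martingale and converges almost surely.

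The second step is the partition argument from the proof of Theorem~\ref{Theorem: convergence measure continuous uniform}. Fix $\epsilon>0$. By Theorem~\ref{compactness continuous tree logarithm case}, which is where $\gamma>1$ enters, we pick $t$ with $\pr(d_H(\T(t),\T)>\epsilon)<\epsilon/2$. Since $\eta([0,t])<\infty$ almost surely, we can then pick $K$ with $\pr(C_K<t)>1-\epsilon/2$, so that $\pr(d_H(\T^{(K)},\T)>\epsilon)<\epsilon$. Conditionally on $\T^{(K)}$, which is a finite union of segments and hence compact, we take a minimal partition $J_1,\dots,J_{N_\epsilon}$ of $\T^{(K)}$ into sets of diameter less than $\epsilon$. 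On $E=\{d_H(\T^{(K)},\T)\le\epsilon\}$ the sets $J_i^\uparrow$ partition $\T$ and have diameter at most $3\epsilon$. Lemma~\ref{Useful lemma prokhorov and partitions}, a union bound conditional on $\T^{(K)}$, and taking expectations then give
\[\pr\big(d_P(\mu^{(j)},\mu^{(m)})\ge 4\epsilon\big)\le\sum_{i=1}^b\pr\Big(|\mu^{(j)}(J_i^\uparrow)-\mu^{(m)}(J_i^\uparrow)|>\tfrac{\epsilon}{b}\Big)+\pr(N_\epsilon>b)+\epsilon.\]
We first choose $b$ so that $\pr(N_\epsilon>b)<\epsilon$, which is possible because $N_\epsilon<\infty$ almost surely. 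The first sum then tends to $0$ as $j,m\to\infty$ by the almost sure martingale convergence for each of the finitely many sets $J_i^\uparrow$, $i\le b$. The index sets are random but measurable with respect to $\T^{(K)}$, so we apply the convergence conditionally and use dominated convergence.

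The main obstacle is making sure nothing in the polynomial proof secretly used compactness estimates specific to $t^\beta$. The only compactness input is the Hausdorff approximation, and this is supplied by Theorem~\ref{compactness continuous tree logarithm case}. The one point to check carefully is the passage from $\T(t)$ to $\T^{(K)}$, which uses only local finiteness of $\eta$. Everything else is intensity-free, so I expect the proof to close without new estimates.
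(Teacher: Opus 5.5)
Your proposal is correct and is the paper's proof: the paper says only that the argument of Section~\ref{Section: convergence measures continuous polynomial case} carries over verbatim (the martingale $\mu^{(j)}(A^\uparrow)$, a partition of $\T$ into sets $J_i^\uparrow$ of diameter at most $3\epsilon$, and Lemma~\ref{Useful lemma prokhorov and partitions}), with Theorem~\ref{compactness continuous tree logarithm case} supplying the Hausdorff approximation where $\gamma>1$ is needed. The one slip is the direction of the inequality when choosing $K$: you need $\pr(C_K<t)<\epsilon/2$, so that $\T(t)\subset\T^{(K)}$ and hence $d_H(\T^{(K)},\T)\le d_H(\T(t),\T)$ with high probability, not $\pr(C_K<t)>1-\epsilon/2$.
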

\begin{proof}
    This follows from completely identical reasoning as Section \ref{Section: convergence measures continuous polynomial case}.
\end{proof}

\subsection{Prokhorov approximation of $(\T_n,\nu_n)$ by its first branches -- logarithmic case}
\label{Section: convergence measures discrete logarithmic case}
\begin{thm}
\label{Theorem: tightness measure discrete logarithm case}
    For $\gamma > 1$ and all $\epsilon > 0$, we have,
    $$\lim_{k \to \infty}\limsup_{n\to \infty}\pr\big(d_P\big(\nu_n^{(k)}, \nu_n\big)>\epsilon\big)= 0.$$
\end{thm}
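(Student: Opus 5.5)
We follow the proof of Theorem \ref{Theorem Prokhorov convergence} verbatim and change only the quantitative input, namely the logarithmic analogue of Lemma \ref{lemma: convergence measures discrete case groundwork}. Fix $\epsilon>0$. Using Theorem \ref{Theorem: compactness tree discrete logarithm case}, together with the convergence of the rescaled cut times from Theorem \ref{lemma convergence repeat and attach points PPP log(t+1)beta} to pass from $\T_n(tn^{1/2})$ to $\T_n^{(K)}$, choose $K$ with $\pr(d_H(\T_n^{(K)},\T_n)>\epsilon n^{1/2})<\epsilon$ for all large $n$. Partition $n^{-1/2}\T_n^{(K)}$ into $N_\epsilon$ sets $J_i$ of diameter at most $\epsilon$, and lift them to the sets $J_i^\uparrow$. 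By Lemma \ref{Useful lemma prokhorov and partitions},
\[
\limsup_{n\to\infty}\pr\big(d_P(\nu_n,\nu_n^{(k)})\ge 4\epsilon\big)\le \frac{b^3}{\epsilon^2}\limsup_{n\to\infty}\E[X_{n,k}]+\limsup_{n\to\infty}\pr(N_\epsilon>b)+\epsilon .
\]
The middle term is small for large $b$ by Theorem \ref{Theorem: finite dimensional convergence logarithm case} and compactness of $\T^{(K)}$. It therefore suffices to show $\limsup_n \E[X_{n,k}]\to0$ as $k\to\infty$.

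The martingale computation in the proof of Lemma \ref{lemma: convergence measures discrete case groundwork} is model independent. It gives
\[
\E\big[(\nu_n^{(k)}(A^\uparrow)-\nu_n(A^\uparrow))^2\mid \T_n^{(k)}\big]\le \sum_{j\ge k}\E\big[(c^n_{j+1})^2/(C^n_j)^2\mid\T_n^{(k)}\big],
\]
and we take the right-hand side as $X_{n,k}$. Conditionally on $\T_n^{(j)}$, the length $c_{j+1}^n$ is dominated by a $\mathrm{Geom}(g(n,C_j^n))$ variable. Hence $\E[(c^n_{j+1})^2\mid \T_n^{(j)}]\le 2n/\ln^{2\gamma}(C_j^n n^{-1/2}+1)$.

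The main obstacle is the early terms. For $C_j^n\ll n^{1/2}$ the quantity $\ln^{2\gamma}(C_j^nn^{-1/2}+1)\approx (C_j^n)^{2\gamma}n^{-\gamma}$ is tiny, and the crude bound above blows up. We therefore truncate the geometric at the remaining length, $c^n_{j+1}\le n$, and note that only $j\ge k$ with $k$ large matters. In that range $C_j^n$ is typically of order $n^{1/2}\Lambda^{-1}(j)$, where $\Lambda(x)=\int_0^x\ln^\gamma(s+1)\,ds\sim x\ln^\gamma x$. To make this rigorous we use the Chernoff bound \eqref{Equation: Chernoff bound polynomial case} with a binomial of success probability $g(n,x)$, which gives
\[
\pr(C_j^n\le x n^{1/2})\le \big(e\,x\ln^\gamma(x+1)/j\big)^j .
\]
This makes $C_j^n\le \delta\, j n^{1/2}/\ln^\gamma j$ superexponentially unlikely in $j$. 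On the complementary event we have $C_j^n\ge y_j n^{1/2}$ with $y_j\asymp j/\ln^\gamma j$, so the $j$-th summand is at most
\[
\frac{2n}{\ln^{2\gamma}(y_j+1)\,y_j^2\,n}\asymp \frac{\ln^{2\gamma}j}{j^2\ln^{2\gamma}j}=j^{-2}.
\]
On the bad event we bound the summand by $n^2/(C_j^n)^2$ and combine it with the superexponential tail, splitting at $x=1/\sqrt n$ exactly as in the computation leading to \eqref{Equation: bound on expectation of C_j^n}. The contribution of this part vanishes as $n\to\infty$ for each fixed $j\ge3$, and it is summable in $j$ uniformly in $n$. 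The terms with $C^n_j=n$ (that is, $j\ge N$) contribute nothing, since $c^n_{j+1}=0$ there.

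Summing gives $\limsup_n\E[X_{n,k}]\le C\sum_{j\ge k}j^{-2}\le C/k$. Plugging this into the display and letting $k\to\infty$, then $b\to\infty$ and $\epsilon\to0$, yields the theorem. Note that $\gamma>1$ enters only through Theorem \ref{Theorem: compactness tree discrete logarithm case}; the variance estimate works for any $\gamma>0$.
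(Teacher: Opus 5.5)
Your overall architecture matches the paper's: the partition argument of Theorem \ref{Theorem Prokhorov convergence}, the martingale second-moment bound $\sum_{j\ge k}\E[(c^n_{j+1})^2/(C^n_j)^2\mid\T_n^{(k)}]$, geometric domination of $c^n_{j+1}$, the rescaled Chernoff tail for $C^n_j$, and the good-event estimate $h(y_j)\asymp j^{-2}$ with $h(x)=x^{-2}\ln^{-2\gamma}(x+1)$.

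The step that fails is your treatment of the bad event $\{C^n_j<y_jn^{1/2}\}$, where $y_j\asymp j/\ln^\gamma j$. There you drop the geometric bound in favour of the truncation $c^n_{j+1}\le n$, i.e.\ you bound the summand by $n^2/(C^n_j)^2$, and you claim this contribution vanishes as $n\to\infty$ for each fixed $j$. It does not. For fixed $j$, Theorem \ref{lemma convergence repeat and attach points PPP log(t+1)beta} gives $n^{-1/2}C^n_j\to C_j$ in distribution, so $\pr(C^n_j<y_jn^{1/2})\to\pr(C_j<y_j)>0$. On this event $n^2/(C^n_j)^2>n/y_j^2$. Hence $\E\big[n^2(C^n_j)^{-2}\mathbbm{1}\{C^n_j<y_jn^{1/2}\}\big]\ge c_j\,n\to\infty$, wherever you split. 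In particular the bad-event part is not bounded uniformly in $n$, and your final display yields nothing.

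The bad event is macroscopic. Your own Chernoff bound only makes it exponentially small in $j$, namely $(e\delta(1+o(1)))^j$, not superexponentially small, and not small in $n$ at all. On this event the truncation at $n$ loses a factor of order $n$ relative to the geometric bound $2n/\ln^{2\gamma}(n^{-1/2}C^n_j+1)$.

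The obstacle that motivated the truncation is not actually there. Keep the geometric bound everywhere, so that the $j$th summand is at most $2\E[h(n^{-1/2}C^n_j)]$. Then control the singularity $h(x)\sim x^{-2-2\gamma}$ at $0$ with the tail
$\pr(n^{-1/2}C^n_j\le x)\le(ex\ln^\gamma(x+1)/j)^j\le(ex^{1+\gamma}/j)^j$, which vanishes at $0$ to order $j(1+\gamma)$. Summing by parts against $-h'(x)\asymp x^{-3-2\gamma}$ then converges at $0$ as soon as $j\ge 3$.

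This is exactly the paper's Lemma \ref{Lemma: to check}. It uses the cutoff $M=j/(2e\ln^\gamma j)$ and the identity $-h'(x)x^j\ln^{j\gamma}(x+1)=\frac{2}{j-2}\frac{d}{dx}\big[x^{j-2}\ln^{(j-2)\gamma}(x+1)\big]$. With these, the part below $M$ is $O(j^{-3})$ and the part above $M$ is at most $h(M)=O(j^{-2})$, uniformly in $n$. With that replacement for your bad-event estimate, the rest of your argument goes through, including your remark that $\gamma>1$ is needed only for the Hausdorff step.
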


The proof follows largely the same reasoning as the proof if Section \ref{Section: convergence measures discrete polynomial case}, so we highlight the differences and new bounds. Recall for $S \subset \T_{n}^{(k)}$, that $A^\uparrow \subset \T_n$ denotes the vertices from which the path to $\T_n^{(k)}$ ends in $A$. Furthermore, recall $N$ denotes the number of branches in $\T_n$ and we say $C_n^k = n$ for $k \ge N$. Following identical reasoning to the first part of the proof of Lemma \ref{lemma: convergence measures discrete case groundwork}, we obtain the following result.
\begin{lem}
\label{first lemma}
    For $A \subset \T_n^{(k)}$, we have,
    $$\pr\!\left((\nu_n^{(k)}(A^\uparrow) - \nu_n(A^\uparrow))^2 \ge t^2 \; \middle| \; \T_n^{(k)}\right) \le \frac{1}{t^2} \sum_{j=k}^{n-1} \E\!\left[\frac{2n}{(C_j^n)^{2} \ln^{2\gamma}\big(C_j^n n^{-\frac{1}{2}} + 1\big)} \; \middle| \; \T_n^{(k)}\right].$$
\end{lem}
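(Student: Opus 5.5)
We follow the first half of the proof of Lemma \ref{lemma: convergence measures discrete case groundwork} verbatim; the only change is the geometric domination of the branch lengths, which now uses the logarithmic intensity $g(n,i)$ in place of $(i/n)^\beta$.

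First, apply the conditional Markov inequality to $(\nu_n^{(k)}(A^\uparrow)-\nu_n(A^\uparrow))^2$. By the martingale property of $(\nu_n^{(j)}(A^\uparrow))_{j\ge k}$ (the discrete analogue of Lemma \ref{Lemma: Martingale property}, whose proof only uses uniform attachment and so holds for $g$ as well), the cross term collapses. Since $\nu_n^{(n)}=\nu_n$ and $N\le n$, the conditional second moment then telescopes into
\[
\sum_{j=k}^{n-1}\E\!\left[\nu_n^{(j+1)}(A^\uparrow)^2-\nu_n^{(j)}(A^\uparrow)^2 \,\middle|\, \T_n^{(k)}\right].
\]
Conditioning first on $\T_n^{(j)}$ and the length $c_{j+1}^n$ of the next branch, the same two-case computation as before shows each increment equals
\[
\frac{(c^n_{j+1})^2\,\nu_n^{(j)}(A^\uparrow)\bigl(1-\nu_n^{(j)}(A^\uparrow)\bigr)}{(C^n_{j+1})^2}\le \frac{(c^n_{j+1})^2}{(C_j^n)^2}.
\]

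Next, bound $\E[(c_{j+1}^n)^2\mid \T_n^{(j)}]$. Given $\T_n^{(j)}$, the event $c_{j+1}^n>x$ requires $R_{C_j^n+1}=\dots=R_{C_j^n+x}=0$. Since $g(n,\cdot)$ is increasing in $i$, this has probability at most $(1-g(n,C_j^n))^x$. Hence $c_{j+1}^n$ is dominated by $Y\sim\mathrm{Geom}(p)$ with
\[
p=g(n,C_j^n)=\ln^\gamma\bigl(C_j^n n^{-1/2}+1\bigr)\,n^{-1/2},
\]
and therefore $\E[Y^2]\le 2/p^2=2n/\ln^{2\gamma}(C_j^nn^{-1/2}+1)$. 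On the event $C_j^n=n$ (i.e.\ $j\ge N$) the increment is zero, so the bound holds trivially there. Dividing by $(C_j^n)^2$, applying the tower property over $\T_n^{(j)}\supset\T_n^{(k)}$, and summing over $j$ gives exactly the stated bound.

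The only point needing care is that $p\in(0,1]$, so that the geometric domination is legitimate; this holds for all large $n$, as noted at the start of Section \ref{Section: the logarithmic case}. Otherwise the argument is a routine transcription, and no step is a genuine obstacle. The real work is deferred to bounding the expectation of the right-hand side uniformly in $n$, which is not part of this statement.
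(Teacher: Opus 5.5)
Your proposal is correct and matches the paper's own argument. The paper proves this lemma by repeating the first half of the proof of Lemma~\ref{lemma: convergence measures discrete case groundwork}: conditional Markov, martingale telescoping, and the two-case increment bounded by $(c^n_{j+1})^2/(C^n_j)^2$. The only change is that $c^n_{j+1}$ is now dominated by a $\mathrm{Geom}(g(n,C^n_j))$ variable, which is exactly what you do.
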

Next, we aim to show,
\begin{lem}
\label{Lemma: to check}
    We have,
    $$\limsup_{n \to \infty}\E\!\left[\sum_{j=k}^{n-1} \E\!\left[\frac{2n}{(C_j^n)^{2} \ln^{2\gamma}\big(C_j^n n^{-\frac{1}{2}} + 1\big)} \; \middle| \; \T_n^{(k)}\right]\right] \xrightarrow[k \to \infty]{} 0.$$
\end{lem}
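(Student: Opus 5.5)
By the tower property the outer expectation removes the conditioning, so it suffices to show that
\[
\limsup_{n\to\infty}\sum_{j=k}^{n-1}\E\!\left[h_n(C_j^n)\right]\xrightarrow[k\to\infty]{}0,
\qquad
h_n(x)=\frac{2n}{x^2\ln^{2\gamma}(xn^{-1/2}+1)}.
\]
Note that $h_n$ is decreasing in $x$. Writing $x=yn^{1/2}$ gives $h_n(yn^{1/2})=2/\big(y^2\ln^{2\gamma}(y+1)\big)$. For $y\to 0$ this behaves like $2y^{-2-2\gamma}$, and for $y\ge 1$ it is at most $2y^{-2}$. I would therefore split the expectation according to whether $C_j^n\le n^{1/2}$ or not. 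On the event $\{C_j^n=n\}$ (that is, $j\ge N$) the contribution is $h_n(n)=O(n^{-1}\ln^{-2\gamma}n)$. Summed over $j\le n$ this gives $O(\ln^{-2\gamma}n)\to 0$, so it can be discarded.

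\emph{Lower tail of $C_j^n$.} I would mimic \eqref{Equation: Chernoff bound polynomial case}. For $x\le n$, $\{C_j^n\le x\}$ requires at least $j$ successes among $R_1,\dots,R_x$, each with probability at most $g(n,x)$. Hence
\[
\pr(C_j^n\le x)\le \Big(\frac{e\,x\,g(n,x)}{j}\Big)^j .
\]
With $x=yn^{1/2}$ this reads $\big(e\,y\ln^\gamma(y+1)/j\big)^j$. Put $\Lambda(y)=\int_0^y\ln^\gamma(t+1)dt\asymp y\ln^\gamma(y+1)$. Then $C_j^n n^{-1/2}$ is, at the level of this bound, at least of order $\Lambda^{-1}(j)$ with overwhelming probability once $\Lambda(y)\le j/e^2$. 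I would use the summation-by-parts identity
\[
\E[h_n(C_j^n)]\le \sum_{x\ge1}\big(h_n(x)-h_n(x+1)\big)\pr(C_j^n\le x)+h_n(M_j)
\]
with a cutoff $M_j=y_jn^{1/2}$, where $y_j$ solves $e\,y_j\ln^\gamma(y_j+1)=j/e$. For $x\le M_j$ the Chernoff factor is at most $e^{-j}$ times a geometric decay in $x$ toward small $x$, just as in the polynomial case. I would split this range at $x=n^{1/2}$, where $h_n$ switches between its two power regimes. For small $x$ the dominant term is roughly $\sum_x x^{-3-2\gamma}n^{1+\gamma}(x^{1+\gamma}n^{-(1+\gamma)/2}/j)^j$, which is controlled for $j\ge 3$ by the same choice-of-$M$ trick and is $O(j^{-2})$. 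The tail term satisfies $h_n(M_j)\lesssim 1/\big(y_j^2\ln^{2\gamma}(y_j+1)\big)\asymp 1/\Lambda(y_j)^2\asymp j^{-2}$.

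\emph{Conclusion and main obstacle.} Both parts give $\E[h_n(C_j^n)]\le C j^{-2}$ uniformly in $n$ large, for $j\ge 3$. Then $\limsup_n\sum_{j\ge k}\E[h_n(C_j^n)]\le C/k\to 0$.

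The delicate step is making the Chernoff bound uniform across both regimes $y\ll1$ and $y\gg1$. Care is needed because $g(n,x)$ evaluated at the top of the range overestimates the success probabilities. The key point I would verify is that $y\ln^\gamma(y+1)$ is increasing and comparable to $\Lambda(y)$ (both $\asymp y^{1+\gamma}$ near $0$ and $\asymp y\ln^\gamma y$ near $\infty$), so that the cutoff $y_j$ satisfies $y_j^2\ln^{2\gamma}(y_j+1)\gtrsim j^2$. Once this comparability is in place, the computation reduces to the polynomial one.
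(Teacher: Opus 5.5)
Your proposal is correct and takes essentially the paper's route. Both arguments use the tower property, the binomial/Chernoff bound $\pr(C_j^n\le x)\le (e\,x\,g(n,x)/j)^j$, and summation by parts cut off at $x=Mn^{1/2}$, with $M$ chosen so that the Chernoff ratio there is a constant below $1$ (your implicit $y_j$ versus the paper's explicit $M=j/(2e\ln^\gamma j)$); the tail term is then of order $j^{-2}$. The differences are cosmetic: the paper bounds the head sum in one step via the exact antiderivative $-h'(x)\ln^{j\gamma}(x+1)x^j=\tfrac{2}{j-2}\tfrac{d}{dx}\big[x^{j-2}\ln^{(j-2)\gamma}(x+1)\big]$ rather than splitting at $x=n^{1/2}$, and your separate treatment of $\{C_j^n=n\}$ is slightly more careful than the paper, which applies the Chernoff bound for $k\ge n$, where it fails under the convention $C_j^n=n$ for $j\ge N$ (that piece contributes only $O(\ln^{-2\gamma}n)$ in total).
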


\begin{proof}
   We start with the bound,
    $$\pr(C_j^n \le x) \le \left(\frac{ex\ln^\gamma\big(xn^{-\frac{1}{2}} + 1\big)}{jn^{\frac{1}{2}}}\right)^j,$$
    which can be obtained through analogous reasoning used to derive Equations \eqref{Equation: bound C_j by binomial polynomial case} and \eqref{Equation: Chernoff bound polynomial case}. Define the function $h(x) = x^{-2}\ln^{-2\gamma}(x+1)$. In what follows, we derive the analogue of Equation~\eqref{Equation: bound on expectation of C_j^n}.
     \begin{align*}
         \E&\!\left[\frac{n}{(C_j^n)^{2} \ln^{2\gamma}\big(C_j^n n^{-\frac{1}{2}} + 1\big)}\right] = \E\!\left[h(n^{-\frac{1}{2}}C_j^n)\right],\\
         &= \sum_{k=1}^\infty \Big(h\big(kn^{-\frac{1}{2}}\big) - h\big((k+1)n^{-\frac{1}{2}}\big)\Big)\pr\big(h(n^{-1/2}C_j^n) \ge h(n^{-1/2}k)\big),\\
         &\le \hspace{-2pt} \sum_{k \le Mn^\frac{1}{2}} \hspace{-1pt}\left(h\big(kn^{-\frac{1}{2}}\big) - h\big((k+1)n^{-\frac{1}{2}}\big)\right)\pr(C_j^n\le  k) +\hspace{-2pt} \sum_{k >Mn^\frac{1}{2}} \hspace{-1pt}h\big(kn^{-\frac{1}{2}}\big) - h\big((k+1)n^{-\frac{1}{2}}\big),\\
         &\le \left(\frac{e}{j}\right)^j \hspace{-2pt}\sum_{k \le Mn^\frac{1}{2}} \hspace{-1pt} -h'(x_k) n^{-\frac{1}{2}} \ln^{j\gamma}\big(kn^{-\frac{1}{2}}+1\big) (kn^{-\frac{1}{2}})^j + \hspace{-2pt} \sum_{k >Mn^\frac{1}{2}} \hspace{-1pt}h\big(kn^{-\frac{1}{2}}\big) - h\big((k+1)n^{-\frac{1}{2}}\big),
     \end{align*}
     for some cutoff point $M = M(j,\gamma)$ to be determined. In the above, $h'(x_k)$ comes from the mean value theorem and $x_k \in (kn^{-\frac{1}{2}}, (k+1)n^{-\frac{1}{2}})$. For ease of notation, define,
     $$S_1 = \hspace{-2pt} \sum_{k \le Mn^\frac{1}{2}} \hspace{-1pt} -h'(x_k) n^{-\frac{1}{2}} \ln^{j\gamma}\big(kn^{-\frac{1}{2}}+1\big) (kn^{-\frac{1}{2}})^j \text{ and } S_2 = \hspace{-2pt} \sum_{k >Mn^\frac{1}{2}} \hspace{-1pt}h\big(kn^{-\frac{1}{2}}\big) - h\big((k+1)n^{-\frac{1}{2}}\big),$$
     We start with $S_1$. Since $-h'(x) = \ln^{j\gamma}(x+1)^{j\gamma}x^j$ is increasing and $x_k \le (k+1)n^{-\frac{1}{2}}$, we get,
     \begin{align*}S_1 &\le \sum_{k \le Mn^\frac{1}{2}} -h'((k+1)n^{-\frac{1}{2}})n^{-\frac{1}{2}} \ln((k+1)n^{-\frac{1}{2}}+1)^{j\gamma}((k+1)n^{-\frac{1}{2}})^j,\\
     &\le \sum_{k \le Mn^\frac{1}{2}}\int_{(k+1)n^{-\frac{1}{2}}}^{(k+2)n^{-\frac{1}{2}}} -h'(x) \ln^{j\gamma}(x+1)x^jdx,\\
     &\le \int_0^{M + 2n^{-\frac{1}{2}}} -h'(x)\ln^{j\gamma}(x+1)x^j dx,\\
     &= \frac{2}{j-2} (M + 2n^{-\frac{1}{2}})^{j-2} \ln^{(j-2)\gamma}(M + 2n^{-\frac{1}{2}}  + 1),
     \end{align*}
     where the last step holds for $j \ge 3$. We choose $M = \frac{j}{2e\ln^\gamma(j)}$. Since $j \le n$ we have $n^{-\frac{1}{2}} \le j^{-\frac{1}{2}}$ and thus we have that $M + 2n^{-\frac{1}{2}} \le \frac{j}{2e\ln^\gamma(j)} + 2j^{-\frac{1}{2}} \le \frac{j}{e\ln^\gamma(j)} \le j$ for sufficiently large $j$. We obtain,
     $$\left(\frac{e}{j}\right)^j S_1 \le \left(\frac{e}{j}\right)^j \frac{2}{j-2} \left(\frac{j}{e \ln^\gamma(j)}\right)^{j-2} \ln^{(j-2)\gamma}(j) = \frac{e^2}{j^2} \frac{2}{j-2} = O(j^{-3}).$$
     It remains to bound $S_2$, which is a telescoping sum and hence,
    $$S_2 \le h(M) = \frac{4e^2 \ln^{2\gamma}(j)}{j^2 \ln^{2\gamma}\left(\frac{j}{2e\ln^\gamma(j)} + 1 \right)} = O(j^{-2}).$$
    We conclude that $\E\!\left[h(n^{-\frac{1}{2}}C_j^n)\right] = O(j^{-2})$ and hence,
\begin{align*}
    \limsup_{n \to \infty} \,&\E\hspace{-2pt}\left[\sum_{j=k}^{n-1} \E\hspace{-2pt}\left[\frac{2n}{(C_j^n)^{2} \ln^{2\gamma}\big(C_j^n n^{-\frac{1}{2}} + 1\big)} \; \middle| \; \T_n^{(k)}\right]\right],\\
    &\hspace{35pt}\le \limsup_{n \to \infty}\sum_{j=k}^n \E\hspace{-2pt}\left[\frac{2n}{(C_j^n)^{2} \ln^{2\gamma}\big(C_j^n n^{-\frac{1}{2}} + 1\big)} \right]\hspace{-3pt} \le C\sum_{j=k}^\infty \frac{1}{j^2} \xrightarrow[k \to \infty]{} 0,
\end{align*}
as desired.
\end{proof}
\begin{proof}[Proof of Theorem \ref{Theorem: tightness measure discrete logarithm case}]
    We bounded $\pr\!\left((\nu_n^{(k)}(A^\uparrow) - \nu_n(A^\uparrow))^2 \ge C^2 \; \middle| \; \T_n^{(k)}\right) \le \frac{X_{n,k}}{C^2}$, where,
    $$X_{n,k} := \sum_{j = k}^N\E\!\left[\frac{2n}{(C_j^n)^{2} \ln^{2\gamma}\big(C_j^n n^{-\frac{1}{2}} + 1\big)} \; \middle| \; \T_n^{(k)}\right],$$
    satisfies $\E[X_{n,k}] \to 0$ as $k \to \infty$. Thus, we have exactly met all requirements used in the proof of Theorem \ref{Theorem Prokhorov convergence} and analogous reasoning finishes the proof. 
\end{proof}

\bibliographystyle{plainnat}
\bibliography{aggregation}
\appendix

\end{document}